\numberwithin{equation}{section}
\newtheorem{theorem}{Theorem}[section]
\newtheorem{corollary}[theorem]{Corollary}
\newtheorem{lemma}[theorem]{Lemma}
\newtheorem{proposition}[theorem]{Proposition}
\theoremstyle{definition}
\theoremstyle{remark}
\providecommand{\abs}[1]{\left\vert#1\right\vert}
\providecommand{\babs}[1]{\big\vert#1\big\vert}
\providecommand{\Babs}[1]{\Big\vert#1\Big\vert}
\providecommand{\nm}[1]{\left\Vert#1\right\Vert}
\providecommand{\bnm}[1]{\big\Vert#1\big\Vert}
\providecommand{\Bnm}[1]{\Big\Vert#1\Big\Vert}
\providecommand{\br}[1]{\left\langle #1 \right\rangle}
\def\ud{d}
\def\dt{\partial_t}
\def\p{\partial}
\def\ls{\lesssim}
\def\rt{\rightarrow}
\def\r{\mathbb{R}}
\def\no{\nonumber}
\def\ue{e}
\def\ui{\mathrm{i}}
\newcommand{\ep}{\varepsilon}
\def\R{\mathbb{R}}
\def\C{\mathbb{C}}
\def\Z{\mathbb{Z}}
\def\N{\mathbb{N}}
\def\ah{\alpha}
\def\bh{\beta}
\def\uh{u}
\def\vh{v}
\def\wah{\widehat{\ah}}
\def\pe{\phi}
\def\ph{\phi^h}
\def\wph{\widehat{\ph}}
\def\id{\mathds{1}}
\def\rr{\mathcal{R}}
\def\tt{\mathcal{T}}
\def\D{\partial_x^3}
\def\Lc{\Lambda_{{\rm c}}}
\def\Ld{\Lambda_{{\rm d}}}
\def\FDds{\abs{D_{{\rm d}}}}
\def\l{\ell}
\newcommand{\qtq}[1]{\quad\text{#1}\quad}
\newcommand{\eps}{\varepsilon}
\let\Re=\undefined
\DeclareMathOperator{\Re}{Re}
\let\Im=\undefined
\DeclareMathOperator{\Im}{Im}
\DeclareMathOperator{\supp}{supp}
\DeclareMathOperator{\sech}{sech}
\begin{document}

\title{The modified Korteweg--de Vries limit of the Ablowitz--Ladik system}

\author{Rowan Killip}
\address{Department of Mathematics, University of California, Los Angeles, CA 90095, USA}
\email{killip@math.ucla.edu}

\author{Zhimeng Ouyang}
\address{Department of Mathematics, University of Chicago, Chicago, IL 60637, USA}
\email{ouyangzm9386@uchicago.edu}

\author{Monica Visan}
\address{Department of Mathematics, University of California, Los Angeles, CA 90095, USA}
\email{visan@math.ucla.edu}

\author{Lei Wu}
\address{Department of Mathematics, Lehigh University}
\email{lew218@lehigh.edu}

\begin{abstract}
For slowly-varying initial data, solutions to the Ablowitz--Ladik system have been proven to converge to solutions of the cubic Schr\"odinger equation.  In this paper we show that in the continuum limit, solutions to the Ablowitz--Ladik system with $H^1$ initial data may also converge to solutions of the \emph{modified Korteweg--de Vries equation}.  To exhibit this new limiting behavior, it suffices that the initial data is supported near the inflection points of the dispersion relation associated with the Ablowitz--Ladik system.

Our arguments employ harmonic analysis tools, Strichartz estimates, and the conservation of mass and energy.  Correspondingly, they are applicable beyond the completely integrable models of greatest interest to us.
\end{abstract}

\maketitle

\setcounter{tocdepth}{1}
\makeatletter
\def\l@subsection{\@tocline{2}{0pt}{2.5pc}{5pc}{}}
\makeatother

\section{Introduction}
The Ablowitz--Ladik system
\begin{align}\label{AL}\tag{AL}
\ui\dt u_n &= -\big(u_{n-1}-2u_n+u_{n+1}\big)
+u_nv_n\big(u_{n-1}+u_{n+1}\big)
\end{align}
with $v_n = \pm \overline u_n$ describes the evolution of a field $u:\R\times \Z\to\C$.  For $v_n=\overline u_n$, the system is defocusing, while for $v_n=-\overline u_n$ it is focusing.

Ablowitz and Ladik introduced this model in \cite{MR0377223,MR0427867} as a discrete form of the one-dimensional cubic Schr\"odinger equation,
\begin{align}\label{NLS}\tag{NLS}
\ui\partial_t \psi = - \Delta \psi \pm 2 |\psi|^2\psi,
\end{align}
that preserves its complete integrability.

These authors already appreciated that solutions of \eqref{AL} with slowly-varying (smooth) initial data should constitute a good approximation to solutions to \eqref{NLS} in the continuum limit.  Subsequent investigations \cite{MR3939333,MR3009717} realized this idea rigorously for initial data in $H^1(\R)$.

In \cite{KOVW1} we proved that this continues to hold true for merely $L^2(\R)$ initial data.  Indeed, in \cite{KOVW1} we demonstrated that for $L^2$ initial data that combines slowly-varying and highly oscillatory components, the continuum limit of \eqref{AL} is a \emph{system} of two \eqref{NLS} equations.  To further explain this, it is convenient to pass to the Fourier representation $\widehat u(t,\theta)$ of the solution; see \eqref{ftc}.

In the system of \eqref{NLS} equations derived in \cite{KOVW1}, one component describes the dynamics of $\widehat u(t,\theta)$ at low frequencies $\theta\approx 0$, modulo $2\pi\Z$; the other describes the dynamics of the antipodal frequencies $\theta\approx \pi$.  Observe that these two regimes correspond to the critical points of the  dispersion relation $\omega_\text{AL}(\theta)=2-2\cos(\theta)$ associated to the linear part of \eqref{AL}.

As a long-term goal, we would like to understand the continuum limit of the \eqref{AL} dynamics for a class of initial data that energizes all frequencies equally --- a discrete form of white noise.  We are interested in this problem because we believe that it will provide insight for two famously thorny problems: (i) The construction of dynamics for \eqref{NLS} with white-noise distributed initial data; (ii) The construction of dynamics for the (one-dimensional) Landau--Lifschitz spin model in its Gibbs state.  For a further discussion of these connections, see \cite{MR4049393,Ishimori}. 

In this paper, we will be studying \eqref{AL} for initial data that excites frequencies near the inflection point $\theta=\pi/2$ of the dispersion relation.  For this purpose, it is convenient to introduce a change of unknown via
\begin{align}\label{1-2}
    \alpha_n(t) = (-\ui)^n\ue^{2\ui t}\uh_n(t) \qtq{and} \beta_n(t) = \pm\overline{\alpha}_n(t)=\ui^n\ue^{-2\ui t}\vh_n(t).
\end{align}
Rewritten in these variables, \eqref{AL} takes the form
\begin{align}\label{mAL} \tag{mAL}
    \dt\ah_n=-\big(1-\ah_n\bh_n\big)\big(\ah_{n+1}-\ah_{n-1}\big),
\end{align}
which we will refer to as the modified Ablowitz--Ladik system.

The factor $e^{2\ui t}$ in \eqref{1-2} removes the oscillation in time that originates from the non-vanishing of the dispersion relation $\omega_\text{AL} (\theta)=2-2\cos(\theta)$ at the inflection point $\theta=\pi/2$.  The factor $(-\ui)^n$ effects a Fourier rotation, bringing the inflection point to $\theta=0$.  By this reasoning, or by direct analysis, we find that the dispersion relation associated to \eqref{mAL} is
\begin{align}\label{mAL dr}
\omega_\text{mAL} (\theta)=2\sin(\theta).
\end{align}
We also see that the study of \eqref{AL} with initial data Fourier-concentrated around the inflection point corresponds to the study of \eqref{mAL} with slowly-varying initial data, that is, initial data Fourier-concentrated around $\theta=0$.  With this in mind, we will study the continuum limit of solutions to \eqref{mAL} with initial data of the following form: Given $\phi_0 \in H^1(\r)$, we let
\begin{align} \label{E:initial data}
    \alpha_n (0) = h \big[P_{\leq \frac\pi {2h}} \phi_0\big] (hn).
\end{align}
Here, $h$ is the length scale associated with the continuum approximation and $P_{\leq \frac\pi{2h}}$ is a smooth projection which eliminates frequencies $|\xi|> \frac{\pi}{h}$.  The role of this projection is to suppress aliasing and so simplify many subsequent formulae, beginning with \eqref{a0hat}.

The prefactor $h$ on RHS\eqref{E:initial data} ensures a balance between the dispersive and nonlinear parts of the evolution.  This is the same form of initial data considered~in~\cite{KOVW1}.  While the evolution equation is independent of $h$, $\alpha_n(t)$ depends strongly on $h$ through the choice of initial data.  However, given its role as the central object of our analysis, we choose not to clutter the notation by marking this dependence explicitly.

The dynamics \eqref{mAL} fits naturally into the Ablowitz--Ladik hierarchy and has Hamiltonian
\begin{align}\label{P defn}
    P(\alpha) = \Im \sum \beta_n\big(\ah_{n+1}-\ah_{n-1}\big),
\end{align}
with respect to the standard Poisson structure
\begin{align}\label{PB defn}
\bigl\{ F, G \bigr\} = -\ui \sum (1-\alpha_n\beta_n)\Bigl[ \tfrac{\partial F}{\partial \alpha_n}\tfrac{\partial G}{\partial \beta_n}
	- \tfrac{\partial F}{\partial \beta_n}\tfrac{\partial G}{\partial \alpha_n}\Bigr].
\end{align}
(Here partial derivatives are in the Wirtinger sense.)  We chose the notation $P(\alpha)$ for the Hamiltonian by analogy with conserved momentum for \eqref{NLS},
$$
P_\text{NLS}(\psi) = \Im \int \overline{\psi(x)}{\mkern 2mu}\psi'(x)\,dx,
$$
which is the generator of the translation symmetry of that equation.

The discrete-space models \eqref{AL} and \eqref{mAL} cannot have a continuous one-parameter group of translation symmetries like their continuum cousins.
From the Taylor expansion $\omega_\text{mAL} (\theta)=2\theta-\tfrac13\theta^3+O(\theta^5)$ of the dispersion relation, we see that beyond mere translation, \eqref{mAL} exhibits weak cubic dispersion.  Moreover, the leading order nonlinearity is cubic and of derivative type.  These features lead us naturally to regard
\begin{equation} \label{mKdV}\tag{mKdV}
    \dt\pe=-\p_x^3\pe\pm6\abs{\pe}^2\p_x\pe
\end{equation}
as describing the dynamics in the continuum limit, albeit in a moving reference frame to remove the leading translation.

To the best of our knowledge, this complex form of the modified Korteweg--de Vries equation was introduced by Hirota \cite{MR0338587}, who described its multisoliton solutions.  For real-valued initial data, \eqref{mKdV} reduces to the modified Korteweg--de Vries equation of Miura \cite{MR252825}.  The equation \eqref{mKdV} is known to be completely integrable and belongs to the same hierarchy as \eqref{NLS}.

For initial data as in \eqref{E:initial data}, it is clear that the characteristic timescale at which \eqref{mAL} exhibits non-trivial dynamics is $h^{-3}$.  Matching coefficients more carefully, we are lead to believe that the solution $\phi(t)$ of \eqref{mKdV} should describe the profile of $\alpha_n(3h^{-3}t)$.  To make this comparison precise, we must first convert the sequence $\alpha_n(3h^{-3}t)$ into a function on the line and then remove the overwhelming translation dynamics.

We pass from a sequence to a function on the real line using the operator $\mathcal R$ described in Lemma~\ref{L:R-opt}.  The choice embodies the Shannon Sampling Theorem: a function $f\in L^2(\R)$ with $\supp(\widehat f) \subseteq [-\tfrac\pi h, \tfrac\pi h]$ is uniquely determined by its values at the lattice points $h\Z$.

To remove the leading order translation dynamics associated to \eqref{mAL}, we must transport $\mathcal R \alpha_n(3h^{-3}t)$ to the left by a distance 
$2 h \cdot 3h^{-3}t$.  Note that the dispersion relation $\omega_\text{mAL} (\theta)$ has slope $2$ at $\theta=0$; this corresponds to a group velocity of $2$ in lattice variables or $2h$ in the continuum variable $x$.  The translation transformation will be denoted $\tt_t$:
\begin{align}\label{tt defn}
[\tt_t f](x) := f(x+6h^{-2}t).
\end{align}

Combining these operations, we find our candidate for an approximate solution to \eqref{mKdV} built from an exact solution $\alpha_n(t)$ of \eqref{mAL} to be
\begin{align}\label{psi-def}
\ph(t,x)= \bigl[(\tt_t\circ\rr) \alpha_n(3h^{-3}t)\bigr](x)
	= h^{-1}\int_{-\pi}^{\pi} \ue^{\ui h^{-1}\theta(x+6h^{-2}t)} \,\widehat{\ah}(3h^{-3}t,\theta)\,\tfrac{\ud\theta}{2\pi} ,
\end{align}
or equivalently,
\begin{align}\label{phi^h-hat}
    \widehat{\ph}(t,\xi)&=\ue^{6\ui h^{-2} t\xi }\,\widehat{\ah}(3h^{-3}t,h\xi)  \id_{[-\pi,\pi]}(h\xi) .
\end{align} 
Note that the passage from $\alpha_n(t)$ to $\ph(t)$ is reversible; indeed,
\begin{align}\label{aa 01}
    \ah_n(t)=h\ph\big(\tfrac{h^3}{3}t,hn-2ht\big).
\end{align}
Note also that the choice \eqref{E:initial data} of initial data corresponds to
$$
\ph(0,x) = [\mathcal R \alpha_n(0)](x)= P_{\leq \frac\pi {2h}} \phi_0(x).
$$

There is one more preliminary we must discuss:   Before we can compare solutions of \eqref{AL} and/or \eqref{mAL} with those of \eqref{mKdV}, we must show that such solutions actually exist.  In fact, since we claim convergence globally (but not uniformly) in time, we need to show the existence of \emph{global} solutions.

We review the $\ell^2(\Z)$ well-posedness of \eqref{mAL} in Proposition~\ref{Prop:l^2-bdd}.  Local well-posedness is elementary.  This is then made global-in-time using the conservation of mass \eqref{mass}.  Note that in the defocusing case, the mass is only defined for those $\alpha_n\in\ell^2(\Z)$ satisfying $\sup_n |\alpha_n| < 1$.  As we will show in Proposition~\ref{Prop:l^2-bdd}, this can be easily ensured by insisting that
\begin{equation}\label{h small}
0<h\leq h_0:=\min\Big\{1,\tfrac{1}{100\|\phi_0\|_{2}^2}\Big\}.
\end{equation}
As our goal is to send $h\to0$, we regard this as an efficient remedy.

On the continuum side, the arguments in \cite{MR1211741} show that \eqref{mKdV} is locally well-posed in $H^{1/4}(\R)$.  (The precise result formulated there is for the nonlinearity $\phi^2\partial_x\phi$.)  Global well-posedness in $H^1(\R)$ then follows from the conservation laws \eqref{311} associated with \eqref{mKdV}.  For further history and the sharp well-posedness result for this model in $H^s(\R)$, see \cite{HGKV}.  In Appendix~\ref{S:UCU}, we will show unconditional uniqueness for $L^\infty_t H^1_x$ solutions to \eqref{mKdV}; see Theorem~\ref{T:UCU} and the attendant discussion.

Our main result in this paper is the following: 

\begin{theorem}\label{thm:main}
Fix $\phi_0\in H^1(\R)$ and let $\phi\in C_t^{ } H^1_x(\R\times\R)$ denote the unique global solution of \eqref{mKdV} with this initial data.
For $h$ satisfying \eqref{h small}, let $\alpha_n(t)$ be the global solution to \eqref{mAL} with initial data specified by \eqref{E:initial data} and let $\phi^h:\R\times\R\to \C$ be the corresponding continuum representative of this solution built via \eqref{psi-def}. 
Then for any $T>0$, as $h\to 0$ we have the strong convergence
\begin{align}\label{258}
    \phi^h(t,x)\rt\phi(t,x) \quad\text{in \;$C_t^{ }H^s_x([-T,T]\times\r)$} \;\text{ for any \,$0\leq s<1$}.
\end{align}
Moreover if $t_n\to t$ and $h_n\to 0$ then 
\begin{align}\label{259}
    \phi^{h_n}(t_n,x) \rightharpoonup \phi(t,x) \quad\text{weakly in $H^1_x(\R)$}.
\end{align}
\end{theorem}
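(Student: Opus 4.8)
The plan is to argue by compactness: extract a subsequential limit of $\phi^h$, verify that it solves \eqref{mKdV}, and then invoke the unconditional uniqueness of Theorem~\ref{T:UCU} to identify it with $\phi$ and upgrade to convergence of the whole family. This is the scheme used for the \eqref{NLS} limit in \cite{KOVW1}, but the derivative in the \eqref{mKdV} nonlinearity forces us to work harder. The first step is a set of \emph{uniform-in-$h$ a priori bounds}. Since \eqref{E:initial data} gives $\|\phi^h(0)\|_{H^1_x}=\|P_{\leq\frac\pi{2h}}\phi_0\|_{H^1_x}\lesssim\|\phi_0\|_{H^1_x}$, it suffices to propagate this: using the conserved mass \eqref{mass} of \eqref{mAL} (well defined once \eqref{h small} holds, by Proposition~\ref{Prop:l^2-bdd}) together with the energy-type invariant of the Ablowitz--Ladik hierarchy whose leading-order continuum behavior reproduces the \eqref{mKdV} energy \eqref{311}, and comparing these discrete quantities with their continuum counterparts, we obtain
\[
\sup_{t\in\R}\|\phi^h(t)\|_{H^1_x}\ \lesssim_{\|\phi_0\|_{H^1_x}}\ 1\qtq{for all}0<h\leq h_0 .
\]
Two consequences will be used repeatedly: the uniform high-frequency tail bound $\lim_{R\to\infty}\sup_{t\in\R,\,0<h\leq h_0}\|P_{>R}\phi^h(t)\|_{L^2_x}=0$, and the exact mass identity $\|\phi^h(t)\|_{L^2_x}^2=\|\phi^h(0)\|_{L^2_x}^2\to\|\phi_0\|_{L^2_x}^2$.

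Next we record the \emph{equation satisfied by $\phi^h$}. Differentiating \eqref{phi^h-hat} in $t$, substituting \eqref{mAL}, and unwinding the maps $\rr$, $\tt_t$ of Lemma~\ref{L:R-opt} together with the Taylor expansion of \eqref{mAL dr}, one finds
\[
\partial_t\phi^h=-\partial_x^3\phi^h\pm 6\,|\phi^h|^2\,\partial_x\phi^h+\mathcal E^h ,
\]
where the error $\mathcal E^h$ collects: (i) the mismatch between the multiplier $6h^{-3}\bigl(h\xi-\sin(h\xi)\bigr)$ and $\xi^3$, which on the support $|\xi|\le\pi/h$ is $O(h^2\xi^5)$; (ii) the discrepancy, at scale $h$, between the difference operator $\alpha_{n+1}-\alpha_{n-1}$ and $2h\,\partial_x$; and (iii) the aliasing produced by the cutoff $\id_{[-\pi,\pi]}(h\xi)$ in \eqref{E:initial data} and \eqref{phi^h-hat}. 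For each piece we split frequencies into $\lesssim R$ and $\gtrsim R$: the low part is $O_R(h^2)$ and vanishes as $h\to 0$ for fixed $R$, while the high part is dominated by the uniform tail bound; hence $\mathcal E^h\to 0$ as $h\to 0$, say in $C_tH^{-2}_x([-T,T]\times\R)$. Handling the cubic term (both in $\mathcal E^h$ and in the limit equation) requires care because of the derivative it carries; this is where the Strichartz estimates --- for the continuum Airy group and for its discrete, $h$-dependent analog associated with \eqref{mAL} --- are invoked, alongside the harmonic-analytic bookkeeping above.

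With these inputs the compactness argument is routine. The equation and the uniform $H^1_x$ bound show $\partial_t\phi^h$ is bounded in $C_tH^{-2}_x([-T,T]\times\R)$, so $\{\phi^h\}$ is equicontinuous from $[-T,T]$ into $H^{-2}_x$, and interpolating with the uniform $H^1_x$ bound gives equicontinuity into $H^s_x$ for each $s<1$. A subsequence $h_k\to 0$ then produces a limit $\phi_*\in C_tH^s_x\cap L^\infty_tH^1_x$ with $\phi^{h_k}(t)\rightharpoonup\phi_*(t)$ in $H^1_x$ for each $t$; combining this weak convergence with $\|\phi^{h_k}(t)\|_{L^2_x}\to\|\phi_0\|_{L^2_x}=\|\phi_*(t)\|_{L^2_x}$ upgrades it to \emph{strong} convergence in $C_tH^s_x([-T,T]\times\R)$ for every $0\le s<1$. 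Fixing $\tfrac12<s<1$, the factor $|\phi^{h_k}|^2$ converges strongly in $C_tH^s_x$ and multiplies the weakly convergent $\partial_x\phi^{h_k}$, so $|\phi^{h_k}|^2\partial_x\phi^{h_k}\to|\phi_*|^2\partial_x\phi_*$ in the sense of distributions; together with $\mathcal E^{h_k}\to 0$ this shows $\phi_*$ is a weak solution of \eqref{mKdV} in $L^\infty_tH^1_x$ with $\phi_*(0)=\lim_k P_{\leq\frac\pi{2h_k}}\phi_0=\phi_0$. By the unconditional uniqueness of Theorem~\ref{T:UCU}, $\phi_*=\phi$, and as the limit is independent of the subsequence, $\phi^h\to\phi$ in $C_tH^s_x([-T,T]\times\R)$ for all $0\le s<1$ --- this is \eqref{258}. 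Finally, if $t_n\to t$ and $h_n\to 0$, then \eqref{258} gives $\|\phi^{h_n}(t_n)-\phi(t_n)\|_{H^s_x}\to 0$ while $\phi\in C_tH^1_x$ gives $\phi(t_n)\to\phi(t)$ in $H^1_x\hookrightarrow H^s_x$; thus $\phi^{h_n}(t_n)\to\phi(t)$ in $H^s_x$, and the uniform $H^1_x$ bound then forces the weak convergence \eqref{259}.

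The main obstacle is the combination of the derivative nonlinearity in \eqref{mKdV} with the mere $H^1_x$ regularity at our disposal: the error $\mathcal E^h$ cannot be shown to be small, nor can one pass to the limit in $|\phi^h|^2\partial_x\phi^h$, by soft arguments alone, and it is precisely here that the Strichartz estimates, the careful accounting of the discrete-difference and aliasing errors against the uniform high-frequency tail bound, and --- for identifying the limit --- the unconditional uniqueness of Appendix~\ref{S:UCU} (itself a delicate matter for \eqref{mKdV} at the $H^1$ regularity) carry the argument. A secondary technical point is the identification of the hierarchy's energy at the continuum scale and its coercivity; the restriction \eqref{h small} is imposed to keep $\sup_n|\alpha_n(t)|<1$ and so keep the relevant expansions and conserved quantities meaningful.
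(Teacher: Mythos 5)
Your overall scheme---uniform $H^1$ bounds from the conserved quantities, compactness, identification of subsequential limits as Duhamel solutions, and then Theorem~\ref{T:UCU} to conclude---is the same as the paper's, and the final upgrade to \eqref{258}--\eqref{259} is handled correctly. However, two steps in the middle have genuine gaps. First, the passage from weak to strong convergence is circular and omits tightness. You assert $\|\phi^{h_k}(t)\|_{L^2_x}\to\|\phi_0\|_{L^2_x}=\|\phi_*(t)\|_{L^2_x}$ and deduce strong $C_tH^s_x$ convergence; but the identity $\|\phi_*(t)\|_{L^2_x}=\|\phi_0\|_{L^2_x}$ is precisely what is unknown at that stage (weak convergence only gives $\liminf$), and it would follow only after identifying $\phi_*$ with the mass-conserving \eqref{mKdV} solution---an identification which, in your argument, uses the strong convergence you are trying to produce. (Also, the ``exact mass identity'' is not exact: the conserved quantity is $M(\alpha)$ in \eqref{mass}, not $\|\alpha\|_{\ell^2_n}^2$; this is harmless but symptomatic.) Since bounded sets of $H^1(\R)$ are not precompact in $H^s(\R)$, time-equicontinuity in a negative Sobolev norm plus the uniform $H^1$ bound does not by itself yield a subsequence converging in $C_tH^s_x(\R\times[-T,T])$: mass may escape to spatial infinity. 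The paper closes exactly this hole with the tightness statement \eqref{tightness}, proved in Section~\ref{S:4} via the locality Lemma~\ref{L:locA} and a weighted-mass monotonicity computation along the \eqref{mAL} flow over microscopic times of length $h^{-3}$; some such no-escape-of-mass argument (or a reorganization that first identifies the limit using only local compactness and only afterwards recovers the $L^2$ norm) is indispensable and absent from your proposal.

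Second, the error analysis behind $\partial_t\phi^h=-\partial_x^3\phi^h\pm6|\phi^h|^2\partial_x\phi^h+\mathcal E^h$ with $\mathcal E^h\to0$ in $C_tH^{-2}_x$ does not go through by the low/high frequency splitting you describe, because the only uniform tail information available is in $L^2$, not in $\dot H^1$. Even for the linear mismatch $m_h(\xi)=6h^{-3}(h\xi-\sin(h\xi))-\xi^3$ one has $\langle\xi\rangle^{-2}|m_h(\xi)|\lesssim\min(h^2|\xi|^3,|\xi|)$ on $|\xi|\le\pi/h$, so the high-frequency piece is controlled by the $\dot H^1$ tail of $\phi^h(t)$, which is merely bounded uniformly in $h$ and $t$ (this piece could be rescued by working in $H^{-3}$). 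More seriously, the nonlinear errors---the discrepancy between the symmetric difference at scale $h$ and $2h\partial_x$, and the aliasing folded back from frequencies beyond $\pi/h$---carry a full derivative: their dangerous part involves $\partial_x\phi^h$ at frequencies $\sim h^{-1}$, whose size is $O(1)$, and the folding can deposit it at low frequencies, so no fixed negative Sobolev norm makes it small by soft arguments. This is exactly why the paper truncates to $\widetilde\phi^h=P_h\phi^h$ so that Lemma~\ref{L:abc} applies (no aliasing), and then controls the difference of the nonlinearities over the microscopic interval $[-3h^{-3}T,3h^{-3}T]$ by the discrete Strichartz estimate \eqref{Strichartz-ineq-d} combined with the paraproduct bound \eqref{paraproduct-2}, gaining the factor $h^{1/8}$ in Lemma~\ref{L:error}, with a matching continuum Strichartz/paraproduct step for the difference quotient versus $\partial_x$. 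You state that Strichartz estimates ``are invoked,'' but no estimate is given in which the $h^{-3}$-long time interval is actually beaten; as written, neither the smallness of $\mathcal E^h$ nor the identification of the limit equation is substantiated.
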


A major inspiration for literature on discrete approximations for Hamiltonian PDE has been developing and analyzing numerical methods \cite{MR3119633,MR4708776,MR1308108,MR2485456,MR2980459}.  Let us pause to view Theorem~\ref{thm:main} through this lens: It says that the discrete space approximation \eqref{mAL} provides a stable and convergent means of simulating both real- and complex-valued \eqref{mKdV} with low-regularity initial data.  The derivative nonlinearity makes this a significantly more difficult problem than for \eqref{NLS}, to which much of the literature just cited is devoted.

From our motivations described above, it was natural to formulate our main result as a description of the continuum dynamics.  However, this can easily be turned around to yield a result on the long-wave limit of the discrete model:  Combining \eqref{aa 01} and Lemma~\ref{L:sums to int} with the $s=0$ case of \eqref{258}, shows that for any $T>0$,
\begin{align}\label{258'}
    h^{\frac12}\bigl\| h^{-1} \alpha_n(3h^{-3}t) - \phi(t,nh-2ht) \bigr\|_{\ell^2_n} \rightarrow 0 \quad \text{uniformly for $|t|\leq T$}.
\end{align}

Much prior effort has been expended in understanding the long-wave limit for the FPUT chain, with the attitude that the complete integrability of the KdV provides an explanation for the recurrences observed there; see, for example, \cite{MR3366652,MR3327553,MR1870156}.

The problem with which we began our discussion was to describe solutions to \eqref{AL} with initial data whose Fourier support is centered around the inflection point of the dispersion relation.  Before discussing the proof of Theorem~\ref{thm:main}, let us recast the result in this light:

\begin{corollary}\label{C:AL}
Fix $\phi_0\in H^1(\R)$.  For $h$ satisfying \eqref{h small}, let $u_n(t)$ denote the global solution to \eqref{AL} with initial data
\begin{align} \label{E:initial data'}
    \uh_n (0) = \ui^n h \big[P_{\leq \frac\pi{2h}} \phi_0\big] (hn).
\end{align}
The solution $\phi(t)$ to \eqref{mKdV} with initial data $\phi_0$ describes the small-$h$ behavior of $u_n(t)$ in the following senses: The function
\begin{align}\label{psi-def'}
    \ph(t,x)=h^{-1}\ue^{6\ui h^{-3} t}\int_{-\pi}^{\pi} \ue^{\ui h^{-1}\theta(x+6h^{-2}t)} \,\widehat{u}(3h^{-3}t,\theta+\tfrac{\pi}{2})\,\tfrac{\ud\theta}{2\pi}
\end{align}
satisfies both \eqref{258} and \eqref{259}; moreover, as $h\to 0$,
\begin{align}
    h^{\frac12} \bigl\| h^{-1} u_n(3h^{-3}t) - \ui^n e^{-2it}\phi(t,nh-2ht) \bigr\|_{\ell^2_n} \rightarrow 0 \quad \text{uniformly for $|t|\leq T$}.
\end{align}
\end{corollary}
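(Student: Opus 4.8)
The plan is to derive Corollary~\ref{C:AL} directly from Theorem~\ref{thm:main} by inverting the substitution \eqref{1-2}. If $u_n(t)$ solves \eqref{AL} with the data in \eqref{E:initial data'}, then $\alpha_n(t):=(-\ui)^n\ue^{2\ui t}u_n(t)$ solves \eqref{mAL}, with $\beta_n=\pm\overline{\alpha}_n$ as in \eqref{1-2}; and because $(-\ui)^n\ui^n=1$, the data \eqref{E:initial data'} is carried exactly onto the data \eqref{E:initial data}. Consequently, global existence and the $\ell^2(\Z)$ bounds for $u_n$ follow from Proposition~\ref{Prop:l^2-bdd} applied to $\alpha_n$ (for $h$ as in \eqref{h small}), and every claim in the corollary reduces to a claim about the $\alpha_n$ of Theorem~\ref{thm:main}.

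The second step is to identify the $\phi^h$ of \eqref{psi-def'} with the continuum representative \eqref{psi-def} built from $\alpha_n$. Since $(-\ui)^n=\ue^{-\ui n\pi/2}$, the substitution is a Fourier rotation by $\tfrac\pi2$, so $\widehat{\alpha}(t,\theta)=\ue^{2\ui t}\,\widehat{u}(t,\theta+\tfrac\pi2)$; one must also record that the cutoff $P_{\leq\frac\pi{2h}}$ in \eqref{E:initial data'} forces $\widehat{u}(\,\cdot\,,\theta+\tfrac\pi2)$, after the shift, to be concentrated well inside $[-\pi,\pi]$, so no aliasing occurs. Replacing $t$ by $3h^{-3}t$ turns the phase $\ue^{2\ui t}$ into $\ue^{6\ui h^{-3}t}$, and inserting the resulting identity into \eqref{psi-def} reproduces \eqref{psi-def'} verbatim. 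Hence \eqref{258} and \eqref{259} for this $\phi^h$ are nothing other than the conclusions of Theorem~\ref{thm:main}.

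For the $\ell^2$ statement, I would invert \eqref{aa 01} at time $3h^{-3}t$ to write $h^{-1}\alpha_n(3h^{-3}t)$ as $\phi^h(t,\cdot)$ evaluated at the appropriate shifted lattice point, and then use $u_n(t)=\ui^n\ue^{-2\ui t}\alpha_n(t)$ to express $h^{-1}u_n(3h^{-3}t)$ as a unimodular multiple of that value. The unimodular prefactors (the analogue of $\ui^n\ue^{-2\ui t}$, evaluated at the rescaled time) have modulus $1$ and hence drop out of the $\ell^2_n$ norm, exactly as they do in \eqref{258'}. Lemma~\ref{L:sums to int} then converts the weighted sum $h^{1/2}\|\cdot\|_{\ell^2_n}$ into $\|\phi^h(t)-\phi(t)\|_{L^2_x}$ modulo an error that vanishes as $h\to0$ (uniformly on $|t|\le T$, using the uniform $H^1$-bound on $\phi^h(t)-\phi(t)$ furnished by \eqref{259} together with the frequency localization of $\phi^h$); this tends to $0$ uniformly for $|t|\le T$ by the $s=0$ case of \eqref{258}. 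Combining these observations yields the claimed convergence.

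There is no genuine analytic obstacle here — all of it is bookkeeping around the substitution \eqref{1-2}, since the real work has already been done in Theorem~\ref{thm:main}. The one point that needs care is the Fourier-rotation step: verifying that the rotation $\theta\mapsto\theta+\tfrac\pi2$ induced by $(-\ui)^n$, the smooth projection $P_{\leq\frac\pi{2h}}$, and the domain of integration $[-\pi,\pi]$ in \eqref{psi-def'} are mutually consistent, so that \eqref{psi-def'} genuinely coincides with \eqref{psi-def} with no frequencies lost or double-counted under $2\pi$-periodicity. Given the conventions fixed in \eqref{ftc} and \eqref{E:initial data}, I expect this to be immediate.
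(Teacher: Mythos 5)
Your proposal is correct and follows exactly the route the paper intends: the corollary is obtained from Theorem~\ref{thm:main} by undoing the change of variables \eqref{1-2} (which sends the data \eqref{E:initial data'} precisely to \eqref{E:initial data} and turns \eqref{psi-def} into \eqref{psi-def'} via the Fourier rotation $\widehat\alpha(t,\theta)=e^{2\ui t}\widehat u(t,\theta+\tfrac\pi2)$), with the $\ell^2$ statement then coming from \eqref{aa 01}, Lemma~\ref{L:sums to int}, and the $s=0$ case of \eqref{258} as in \eqref{258'}. Your reading of the unimodular prefactor as being evaluated at the rescaled time (i.e.\ $\ui^n e^{-6\ui h^{-3}t}$, and likewise the shift $nh-6h^{-2}t$) is indeed the correct interpretation of the displayed formula, and your sketch of the sampling-error estimate (band-limiting plus the uniform $H^1$ bounds) is the right way to apply Lemma~\ref{L:sums to int} to the non-band-limited $\phi$.
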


\subsection{Outline of the proof}
Overall, our strategy is based on a combination of compactness and uniqueness arguments: We show that the continuum manifestations $\phi^h(t)$ of the discrete solutions $\alpha_n(t)$ are bounded in $L^{\infty}_tH^1_x([-T,T]\times\R)$ and form a compact family in $C_t L^2_x([-T,T]\times\R)$.  These tasks are accomplished in Sections~\ref{Sec:Boundedness} and~\ref{S:4}, respectively.  Then in Proposition~\ref{Prop:convergence} we prove that any subsequential limit must be a solution of \eqref{mKdV} with initial data $\phi_0$ in the Duhamel sense, that is, \eqref{UCU Duhamel} holds.  By the bounds proved in Section~\ref{Sec:Boundedness}, this subsequential limit belongs to $L^\infty_t H^1_x$.  Now we may apply the following unconditional uniqueness result for solutions to \eqref{mKdV}:

\begin{theorem}\label{T:UCU}
Given $\phi_0\in H^1(\R)$ there is exactly one solution $\phi:\R\times\R\to\C$ to
\begin{align}\label{UCU Duhamel}
    \pe(t)=\ue^{-t\partial^3}\pe_0 \pm 6\! \int_0^t \!\ue^{-\ui(t-s)\partial^3} |\phi(s)|^2\phi'(s)\,\ud s
\end{align}
that belongs to $L^\infty_t H^1_x$. 
\end{theorem}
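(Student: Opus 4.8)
The approach is a Poincar\'e--Dulac normal form: we transform \eqref{UCU Duhamel} so that the derivative in the nonlinearity is no longer visible, and then run a Gronwall estimate on the difference of two $L^\infty_t H^1_x$ solutions. (Existence of such a solution follows, as noted above, from the local theory of \cite{MR1211741} together with the conservation laws \eqref{311}; only uniqueness is at issue.) A naive energy estimate will not close: writing $w=\phi-\tilde\phi$ and $a=\tfrac12(\phi+\tilde\phi)$ one computes $\tfrac{d}{dt}\|w\|_{L^2}^2\sim\Re\!\int\overline{aa'}\,w^2$, and since $a'$ lies merely in $L^2$ this is only $O_M(\|w\|_{L^2}^{3/2})$, which does not force $w\equiv0$. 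The derivative loss must instead be recovered from the dispersion of the Airy propagator.

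Pass to the interaction representation $v(t)=\ue^{t\partial_x^3}\phi(t)$. Since $|\phi|^2\phi'=\phi\,\overline\phi\,\phi'$, Duhamel's formula reads, on the Fourier side,
\[
\widehat v(t,\xi)=\widehat\phi_0(\xi)\pm6\ui\!\int_0^t\!\!\int_{\xi=\xi_1-\xi_2+\xi_3}\!\!\ue^{\ui s\Omega}\,\xi_3\,\widehat v(s,\xi_1)\,\overline{\widehat v(s,\xi_2)}\,\widehat v(s,\xi_3)\,\ud s,\qquad\Omega=\xi_1^3-\xi_2^3+\xi_3^3-\xi^3 .
\]
Using $\xi=\xi_1-\xi_2+\xi_3$ one has the factorization $\Omega=3(\xi_1-\xi_2)(\xi_2-\xi_3)(\xi_1+\xi_3)$, so $\Omega$ vanishes only on the resonant set $\{\xi_1=\xi_2\}\cup\{\xi_2=\xi_3\}\cup\{\xi_1=-\xi_3\}$. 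On $\{\xi_1=\xi_2\}$ the interaction reduces to the drift $\,c\|\phi_0\|_{L^2}^2\,\partial_x v$ (by conservation of mass), which we remove by renormalizing the interaction picture; on $\{\xi_2=\xi_3\}$ it reduces to $m(t)\,v$ with $m$ uniformly bounded (by conservation of momentum); on $\{\xi_1=-\xi_3\}$ it vanishes by oddity. Off the resonant set we integrate by parts in $s$ via $\ue^{\ui s\Omega}=\tfrac1{\ui\Omega}\tfrac{d}{ds}\ue^{\ui s\Omega}$: this produces a boundary term -- a trilinear expression in $v(t)$ and $\phi_0$, carrying no time integral -- together with a new time integral in which $\partial_s$ hits one factor $\widehat v(\xi_j)$ and is replaced, through the equation, by the trilinear nonlinearity, yielding a quintilinear term. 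Both inherit the gain $\Omega^{-1}$; since $\Omega$ is cubic, the standard trichotomy on the relative sizes of $\xi_1,\xi_2,\xi_3,\xi$ shows this gain outweighs the single derivative $\xi_3$. The outcome is a reformulation of \eqref{UCU Duhamel} all of whose terms are controlled in $H^1$ (indeed in $L^2$) with no loss of derivatives, using only $H^1(\r)\hookrightarrow L^\infty(\r)$. Subtracting the reformulated equations for $\phi$ and $\tilde\phi$ (same data), setting $M=\max\{\|\phi\|_{L^\infty_tH^1},\|\tilde\phi\|_{L^\infty_tH^1}\}$, and exploiting multilinearity, we obtain on a short interval $[-\delta,\delta]$ with $\delta=\delta(M)$ an inequality of the form $\|w(t)\|_{H^1}\le\tfrac12\sup_{|s|\le\delta}\|w(s)\|_{H^1}+C_M\!\int_0^{|t|}\|w(s)\|_{H^1}\,\ud s$; absorbing the first term and invoking Gronwall gives $w\equiv0$ on $[-\delta,\delta]$, and, $\delta$ being uniform in the base point, on all of $\R$.

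The main obstacle is the bookkeeping of the integration by parts: one must check in each frequency regime that $|\Omega|^{-1}$ genuinely dominates the derivative -- near the resonant set (two frequencies almost equal, or almost antipodal) this forces one to use the factored form of $\Omega$ -- and one must verify that the quintilinear remainder, which now carries two derivative factors against a single $\Omega^{-1}$, is still admissible at the $H^1$ level; if a borderline term survives, one iterates the normal form finitely many times and estimates the remaining tail crudely using the $L^\infty_tH^1_x$ hypothesis, which is permissible since we need only uniqueness, not a contraction. Finally, the complex-conjugate factor $\overline{\widehat v(\xi_2)}$ reverses the sign of $\xi_2$ in the convolution constraint and of $\xi_2^3$ in $\Omega$; this relocates the resonant set but keeps $\Omega$ a non-degenerate cubic, so the argument applies to complex-valued data exactly as to real ones.
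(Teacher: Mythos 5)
Your plan is the normal-form (Poincar\'e--Dulac) route of Kwon--Oh--Yoon, which is a genuinely different path from the paper's: the paper deliberately avoids normal forms and instead proves, by weighted/mollified energy identities, that \emph{every} $L^\infty_tH^1_x$ solution enjoys local smoothing bounds ($\sup_j\|\chi_j\phi''\|_{L^2_{t,x}}<\infty$ together with $\ell^4_j$-control of $\|\chi_j\phi\|_{L^\infty_{t,x}}$), feeds these into the Strichartz inequality to conclude $\phi'\in L^6_tL^\infty_x$, and then closes an elementary $L^2$ Gronwall estimate for the difference of two solutions. Pursuing the normal form instead is legitimate in principle, but as written your sketch has a gap exactly at its central step, the step that makes this route ``sophisticated and specialized'' in the first place.

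The gap is the treatment of the (near-)resonant region. On $\R$ the sets $\{\xi_1=\xi_2\}$, $\{\xi_2=\xi_3\}$ have measure zero, so the statement that ``on $\{\xi_1=\xi_2\}$ the interaction reduces to the drift $c\|\phi_0\|_{L^2}^2\partial_x v$'' has no content; what must actually be controlled is a neighborhood of these sets, and there the interaction is a \emph{variable-coefficient} transport term of the shape $P_{\le\delta}\bigl(|\phi|^2\bigr)\,\partial_x\phi_{\mathrm{high}}$, which is not removable by renormalizing the interaction picture. Moreover, the claim that off the resonant set the gain $\Omega^{-1}$ ``outweighs the single derivative'' fails pointwise precisely there: with $\xi_1\approx\xi_2=O(1)$ and $|\xi_3|\sim N$ one has $|\Omega|\approx 3N^2|\xi_1-\xi_2|$, so the boundary-term symbol $\xi_3/\Omega\approx\bigl(N|\xi_1-\xi_2|\bigr)^{-1}$ blows up as $\xi_1\to\xi_2$, while on the complementary near-resonant region the full derivative $\sim N$ survives against a coefficient that is merely bounded. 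In a Duhamel-based Gronwall this piece does not close at any level: one only gets $\|P_{\le\delta}(|\phi|^2)\,\partial_x w\|_{L^2}\lesssim\|w\|_{H^1}$, not $\lesssim\|w\|_{L^2}$ or $\|w\|_{H^1}$ with a small constant, and iterating the normal form cannot help since in this region there is no oscillation to exploit. The scheme can be rescued --- e.g.\ cut at $|\Omega|\le1$, so that the near-resonant region forces $|\xi_1-\xi_2|\lesssim N^{-2}$ and Bernstein gives $\|P_{\le N^{-2}}(|\phi|^2)\|_{L^\infty}\lesssim N^{-2}\|\phi\|_{L^2}^2$, beating the derivative; or treat the transport term by an energy/gauge argument in $x$ rather than $t$ --- but none of this is in your write-up, and it is where essentially all the work lies. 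Two smaller points: the absorbable $\tfrac12$-coefficient in your final inequality needs a smallness mechanism for the (non-time-integrated) boundary term that you never identify, and momentum conservation is not needed anywhere, since boundedness of $\Im\int\overline\phi\,\phi'$ already follows from the $L^\infty_tH^1_x$ hypothesis.
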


In this way, we see that all subsequential limits agree and consequently, $\phi^h$ converges in $C_t^{ } L^2_x([-T,T]\times\R)$ to this unique solution to \eqref{mKdV}.  It is then an easy matter to upgrade the modes of convergence to those presented in Theorem~\ref{thm:main} using the uniform bounds of Section~\ref{Sec:Boundedness}. 

This leaves us to prove Theorem~\ref{T:UCU}.  As noted earlier, the local-in-time existence of solutions with initial data $\phi_0\in H^s(\R)$ was proved already in \cite{MR1211741} for $s\geq\frac14$.  In  \cite{MR1211741}, solutions were constructed via contraction mapping in a Banach space that imposes additional spacetime bounds.  Global well-posedness in $H^1(\R)$ then follows through the use of the conservation laws:
\begin{align}\label{311}
\int |\phi(x)|^2\,dx \qtq{and} \int |\phi'(x)|^2 \pm |\phi(x)|^4\,dx.
\end{align}
The central matter to be discussed is uniqueness for $L^\infty_t H^1_x$ solutions (without additional spacetime bounds).  This is known as \emph{unconditional uniqueness}.  For real-valued solutions to \eqref{mKdV}, the paper \cite{Kwon.Oh.Yoon2020} demonstrated such unconditional uniqueness in $C_t^{ } H_x^s$ with $s>\frac{1}{4}$.  We are not aware of any such results for complex-valued \eqref{mKdV} and so provide a proof of Theorem~\ref{T:UCU} in Appendix~\ref{S:UCU}.  Concretely, we show that $L^\infty_t H^1_x$ solutions necessarily satisfy certain spacetime bounds that allow us to prove uniqueness via a Gronwall argument. The methods we employ here are elementary and can be applied to a large class of gKdV-like equations, such as combined nonlinearities of at least cubic type; moreover, the derivative can land on any factor in each nonlinearity.

\subsection*{Acknowledgements}

Rowan Killip was supported by NSF grant DMS-2154022. Zhimeng Ouyang was supported by the NSF Postdoctoral Fellowship DMS-2202824. Monica Visan was supported by NSF grant DMS-2054194. Lei Wu was supported by NSF grant DMS-2104775.

\section{Preliminaries}

Throughout this paper, $C$ will denote a constant that does not depend on the initial data or on $h$, and which may vary from one line to another.
We write $A \lesssim B$ or $B\gtrsim A$ whenever $A\leq CB$ for such a constant $C>0$.  We write $A\simeq B$ whenever $A\lesssim B$ and $B\lesssim A$.  
If $C$ depends on some additional parameters, we will indicate this with subscripts.

Throughout this paper, we will employ the Lebesgue spaces $L^p$ and $\l^p$, as well as the spaces $L_t^q L_x^p$ and $L^q_t\l^p_n$ equipped with the norms
\begin{align*}
    \|F\|_{L_t^q L_x^p(\R\times\R)} &:=\bigg[\int_{\R}\Bigl(\int_{\R} \big|F(t,x)\big|^p \,dx \Bigr)^{\frac{q}{p}} \,dt\bigg]^{\frac{1}{q}},\\
    \nm{\ah}_{L^q_t\l^p_n(\R\times\Z)} &:= \bigg[\int_{\R}\Big(\sum_{n\in\Z}\babs{\ah_n(t)}^p\Big)^{\frac{q}{p}}d t\bigg]^{\frac{1}{q}},
\end{align*}
with the usual modifications when $q$ or $p$ is $\infty$, or when the domain is replaced by some smaller subset.
When $p=q$ we abbreviate $L_t^q L_x^p$ as $L^q_{t,x}$.

Our conventions for the Fourier transform on $\r$ are as follows:
\begin{align}\label{ftr}
    \widehat{f}(\xi) = \int_\R f(x) e^{-i x\xi}\,dx \quad\text{so that}\quad f(x) =\int_\R\widehat{f}(\xi) e^{i x\xi}\,\tfrac{d\xi}{2\pi},
\end{align}
while in the discrete case, we employ
\begin{align}\label{ftc}
\widehat a(\theta) = \sum_{n\in \Z} a_n e^{-i n\theta} \quad\text{so that}\quad a_n = \int_{-\pi}^{\pi} \widehat a(\theta) e^{i n\theta}\, \tfrac{d\theta}{2\pi}.
\end{align}
With these definitions, the Plancherel/Parseval identities read
\begin{align}\label{ftp}
    \int_\R \overline{g(x)} {\mkern 2mu} f(x)\, dx = \int_\R \overline{\widehat{g}(\xi)} {\mkern 2mu} \widehat{f}(\xi)\,\tfrac{d\xi}{2\pi}
    	\quad\text{and}\quad
    		\sum_{n\in \Z} \overline{b_n} {\mkern 2mu} a_n = \int_{-\pi}^{\pi} \overline{\widehat{b}(\theta)} {\mkern 2mu} \widehat{a}(\theta) \,\tfrac{d\theta}{2\pi}.
\end{align}

For $s\in \R$, we write  $\abs{\nabla}^s$ and $\br{\nabla}^s$ for the Fourier multiplier operators with symbols $\abs{\xi}^s$ and $\br{\xi}^s$, respectively. 
The inhomogeneous and homogeneous Sobolev spaces $H^s(\R)$ and $\dot H^s(\R)$ are defined as the closure of Schwartz functions under the norms
\begin{align*}
\|f\|_{H^s(\R)}^2 &:= \big\|\br{\nabla}^s\!f\big\|_{L^2(\R)}^2 = \int_{\R} \big(1+|\xi|^2\big)^s \babs{\widehat{f}(\xi)}^2 \,\tfrac{d\xi}{2\pi},\\
\|f\|_{\dot{H}^s(\R)}^2 &:= \big\|\abs{\nabla}^s\!f\big\|_{L^2(\R)}^2 = \int_{\R} |\xi|^{2s} \babs{\widehat{f}(\xi)}^2 \,\tfrac{d\xi}{2\pi} .
\end{align*}
Clearly, for $s=1$ we have $\|f\|_{H^1}^2 = \|f\|_{L^2}^2 + \|f\|_{\dot{H}^1}^2$.

Let $\varphi$ be a smooth function supported in the ball $|\xi| \leq 2$ such that $\varphi(\xi)=1$ for all $|\xi| \leq 1$. For each dyadic number $N \in 2^\Z$ we define the Littlewood--Paley operators
\begin{align*}
\widehat{P_{\leq N}f}(\xi) \!:=  \varphi\bigl(\tfrac{\xi}N\bigr)\widehat f (\xi), \qquad \widehat{P_N f}(\xi) \!:=  \bigl[\varphi\bigl(\tfrac{\xi}N\bigr) - \varphi \bigl(\tfrac{2 \xi}N\bigr)\bigr]\widehat f (\xi), \qquad P_{> N} :=  1-P_{\leq N}.
\end{align*}
We will frequently write $f_{\leq N}$ for $P_{\leq N} f$ and similarly for the other operators.

The Littlewood--Paley operators commute with derivative operators.  They are bounded on $L^p(\R)$ and $H^s(\R)$ for every $1 \leq p \leq\infty$ and $s\in \R$.  They also satisfy the following Sobolev and Bernstein estimates:
\begin{align*}
\| |\nabla|^{ s} P_N f\|_{L^p_x} \simeq N^{s} \| P_N f \|_{L^p_x}, \qquad \|P_N f\|_{L^q_x} \lesssim N^{\frac{1}{p}-\frac{1}{q}} \| P_N f\|_{L^p_x},
\end{align*}
whenever $s\in \R$ and $1 \leq p \leq q \leq \infty$.

Let us now discuss the passage between functions on the line and those on the $h\Z$ lattice.  We must examine both directions: the initial data $\alpha_n(0)$ is constructed from the initial data for \eqref{mKdV} via \eqref{E:initial data}; the resulting solutions $\alpha_n(t)$ of \eqref{mAL} are then transported back to the line via \eqref{psi-def}.  We begin with the mathematical embodiment of the Shannon Sampling Theorem:

\begin{lemma}\label{L:sums to int}
If $f,g \in L^2(\R)$ satisfy $\supp(\widehat f\,)\subseteq\bigl[-\frac\pi h,\frac \pi h\bigr]$ and $\supp(\widehat g)\subseteq\bigl[-\frac\pi h,\frac \pi h\bigr]$, then
\begin{align}\label{327}
\int_{\R} \overline{g(x)} f(x) \, dx=\int \overline{\widehat g(\xi)} \widehat f(\xi) \tfrac{d\xi}{2\pi} =  h\sum_{n\in\Z} \overline{g(nh)}\,f(nh).
\end{align}
In particular, for such functions $f$ and any $x\in\R$,
\begin{align}\label{328}
f(x) = \int e^{\ui x\xi} \widehat f(\xi) \tfrac{d\xi}{2\pi} = \sum_n \tfrac{\sin(\pi[x-nh]/h)}{\pi[x-nh]/h} f(nh).
\end{align}
\end{lemma}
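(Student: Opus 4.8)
The first equality in \eqref{327} is merely the Parseval identity \eqref{ftp}, so the real content of the lemma is the evaluation of $\int \overline{\widehat g(\xi)}\,\widehat f(\xi)\,\tfrac{d\xi}{2\pi}$ as a lattice sum; this is the classical Shannon sampling theorem, and the plan is simply to prove it in the paper's normalization. The key observation is that, once $\widehat f$ and $\widehat g$ are supported in $I_h:=\bigl[-\tfrac\pi h,\tfrac\pi h\bigr]$, we may regard them as elements of $L^2(I_h)$ and expand them in the complete orthogonal basis $\{e^{\ui nh\xi}\}_{n\in\Z}$ of that space, whose elements have squared $L^2(I_h)$-norm equal to $|I_h|=\tfrac{2\pi}{h}$.

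First I would note that $\widehat f\in L^1(\R)$, since it lies in $L^2$ with compact support; hence the inversion formula in \eqref{ftr} holds pointwise, and in particular
\[
f(-nh)=\int_{I_h}\widehat f(\xi)\,e^{-\ui nh\xi}\,\tfrac{d\xi}{2\pi}
\]
is well defined for every $n\in\Z$, and likewise for $g$. Comparing this with the formula for Fourier coefficients on $I_h$ shows that $\{h f(-nh)\}_{n\in\Z}$ are precisely the coefficients of $\widehat f$ relative to $\{e^{\ui nh\xi}\}$; completeness of this basis then yields the $L^2(I_h)$-convergent expansion $\widehat f(\xi)=\sum_{n\in\Z} h f(nh)\,e^{-\ui nh\xi}$ (after relabelling $n\mapsto-n$), as well as the bound $\sum_n|f(nh)|^2=\tfrac{1}{2\pi h}\|\widehat f\|_{L^2}^2<\infty$. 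Applying Parseval on $I_h$ to the pair $\widehat f,\widehat g$ now gives
\[
\int_{I_h}\overline{\widehat g(\xi)}\,\widehat f(\xi)\,d\xi=\tfrac{2\pi}{h}\sum_n \overline{h g(nh)}\;h f(nh)=2\pi h\sum_n\overline{g(nh)}\,f(nh),
\]
which, after dividing by $2\pi$ and using $\supp(\widehat f\,),\supp(\widehat g)\subseteq I_h$, is exactly \eqref{327}.

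For \eqref{328} I would specialize the identity just proved: fix $x\in\R$ and apply \eqref{327} with $g$ replaced by the band-limited function $g_x$ determined by $\widehat{g_x}(\xi)=e^{-\ui x\xi}\,\id_{I_h}(\xi)$, which again lies in $L^2(\R)$ with Fourier support in $I_h$. On the one hand, $\int\overline{\widehat{g_x}(\xi)}\,\widehat f(\xi)\,\tfrac{d\xi}{2\pi}=\int_{I_h}\widehat f(\xi)\,e^{\ui x\xi}\,\tfrac{d\xi}{2\pi}=f(x)$ by inversion; on the other hand, computing the inverse Fourier transform of $\id_{I_h}$ gives the real-valued kernel $g_x(y)=\tfrac{\sin(\pi(y-x)/h)}{\pi(y-x)}$. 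Hence \eqref{327} produces
\[
f(x)=h\sum_n g_x(nh)\,f(nh)=\sum_n\tfrac{\sin(\pi(x-nh)/h)}{\pi(x-nh)/h}\,f(nh),
\]
where the last step uses that $\theta\mapsto\sin(\theta)/\theta$ is even; the first equality in \eqref{328} is, once more, just the inversion formula \eqref{ftr}.

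The only technical points that need care are the pointwise meaning of the sampled values $f(nh),g(nh)$ and the interchange of summation with integration (equivalently, the $L^2(I_h)$-convergence of the Fourier series); both follow at once from the $L^1$-bound on the compactly supported $\widehat f,\widehat g$ together with Parseval on $I_h$. I do not anticipate a genuine obstacle here: this lemma is the Shannon sampling theorem, and the proof is essentially bookkeeping in the chosen Fourier conventions.
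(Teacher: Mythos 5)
Your proposal is correct and follows essentially the same route as the paper: the first equality is Plancherel on the line, the second is the Parseval identity for the Fourier series on $\bigl[-\tfrac\pi h,\tfrac\pi h\bigr]$ (which, after the rescaling $\theta=h\xi$, is exactly the discrete identity \eqref{ftp} the paper invokes, with coefficients $hf(nh)$ supplied by Poisson/inversion), and \eqref{328} is obtained from \eqref{327} with the same choice $\widehat g(\xi)=e^{-\ui x\xi}\id_{[-\pi,\pi]}(h\xi)$. You merely spell out the Fourier-series bookkeeping that the paper leaves implicit; all computations check out.
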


\begin{proof}
Both identities in \eqref{327} are consequences of the isometry property \eqref{ftp} of the Fourier transforms defined in \eqref{ftr} and \eqref{ftc}.
To deduce \eqref{328}, we apply \eqref{327} with $\widehat g(\xi) = e^{-\ui\xi x} \id_{[-\pi,\pi]}(h\xi)$.
\end{proof}

As an immediate corollary of this lemma, we find the basic mapping properties of the operator we are using to pass from sequences to functions on the real line:

\begin{lemma}\label{L:R-opt}
The operator $\mathcal R:\ell_n^2(\Z)\to L^2_x(\R)$ defined by
\begin{align}\label{R def}
[\mathcal R c] (x) := \tfrac1h \! \int_{-\pi}^{\pi} e^{\ui x\theta/h}\, \widehat c(\theta)\tfrac{d\theta}{2\pi}
	= \int_{-\pi/h}^{\pi/h} e^{\ui x\xi} \, \widehat c(h\xi)\tfrac{d\xi}{2\pi}
	= \sum_n \tfrac{\sin(\pi[x-nh]/h)}{\pi[x-nh]} c_n
\end{align}
is bounded.  Indeed, it satisfies 
\begin{align}
    \|\mathcal Rc \|_{L^2_x(\R)}^2 =  h^{-1} \|c \|_{\ell_n^2(\Z)}^2.
\end{align}
\end{lemma}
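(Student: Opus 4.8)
The final statement to prove is Lemma~\ref{L:R-opt}, giving the mapping properties of $\mathcal{R}$.

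The plan is to derive everything directly from Lemma~\ref{L:sums to int}, which has just been established. First I would verify that the three expressions in \eqref{R def} genuinely agree. The equality between the first and second expressions is merely the substitution $\theta = h\xi$, $\ud\theta = h\,\ud\xi$, which cancels the prefactor $\tfrac1h$ and adjusts the limits of integration from $[-\pi,\pi]$ to $[-\pi/h,\pi/h]$. The equality between the second (or first) expression and the Shannon series $\sum_n \tfrac{\sin(\pi[x-nh]/h)}{\pi[x-nh]}c_n$ follows from expanding $\widehat c(h\xi) = \sum_n c_n e^{-\ui n h\xi}$ on the interval $\xi \in [-\pi/h,\pi/h]$ and integrating term by term, using $\int_{-\pi/h}^{\pi/h} e^{\ui(x-nh)\xi}\,\tfrac{\ud\xi}{2\pi} = \tfrac{\sin(\pi[x-nh]/h)}{\pi[x-nh]}$; alternatively, it is exactly \eqref{328} applied to the band-limited function $f = \mathcal{R}c$ after noting $f(nh)=c_n$, rescaled by the factor coming from the $\tfrac1h$ versus the $f(nh)$ normalization. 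I would present whichever route is cleaner, likely the direct term-by-term computation since it simultaneously shows $\widehat{\mathcal{R}c}(\xi) = \widehat c(h\xi)\id_{[-\pi/h,\pi/h]}(\xi)$, which is the key structural fact.

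Second, for the norm identity, I would apply Parseval on $\R$ (equation \eqref{ftp}) to $f = \mathcal{R}c$, using $\widehat{f}(\xi) = \widehat c(h\xi)\id_{[-\pi/h,\pi/h]}(\xi)$:
\begin{align*}
\|\mathcal{R}c\|_{L^2_x(\R)}^2 = \int_{-\pi/h}^{\pi/h} |\widehat c(h\xi)|^2 \,\tfrac{\ud\xi}{2\pi} = \tfrac1h \int_{-\pi}^{\pi} |\widehat c(\theta)|^2 \,\tfrac{\ud\theta}{2\pi} = h^{-1}\|c\|_{\ell^2_n(\Z)}^2,
\end{align*}
where the middle step is again the substitution $\theta = h\xi$ and the last step is the discrete Parseval identity in \eqref{ftp}. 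This simultaneously establishes that $\mathcal{R}$ maps $\ell^2_n(\Z)$ into $L^2_x(\R)$ (in particular the integral defining $\mathcal{R}c$ converges appropriately, since $\widehat c \in L^2([-\pi,\pi])$) and that it is bounded with operator norm $h^{-1/2}$. Boundedness is then immediate.

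I do not expect any genuine obstacle here: the lemma is purely a bookkeeping consequence of Plancherel and the Shannon sampling identity already recorded in Lemma~\ref{L:sums to int}. The only point requiring a word of care is the interchange of sum and integral when passing to the Shannon-series form of $\mathcal{R}c$; this is justified for $c$ in $\ell^1$ by absolute convergence and then extended to all of $\ell^2$ by density together with the $L^2$ bound just proved (or one simply notes that both sides are understood as the $L^2$-limit of their partial sums, with the band-limited structure of $\widehat{\mathcal{R}c}$ guaranteeing pointwise agreement via \eqref{328}). That is the entirety of the argument.
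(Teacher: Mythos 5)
Your proposal is correct and follows the same route the paper intends: the lemma is stated as an immediate corollary of Lemma~\ref{L:sums to int}, i.e.\ one reads off $\widehat{\mathcal Rc}(\xi)=\widehat c(h\xi)\id_{[-\pi/h,\pi/h]}(\xi)$ and applies the Plancherel identities \eqref{ftp} with the change of variables $\theta=h\xi$, exactly as you do. Your extra remarks on term-by-term integration and the $\ell^1$-to-$\ell^2$ density step are fine but not needed beyond what the paper records.
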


Note that \eqref{psi-def} can be rewritten as
\begin{align} \label{psi-def-R}
\ph(t,x)=[\rr\ah](3h^{-3}t,x+6h^{-2}t)
	= (\tt_t \circ\rr)\bigl[\ah(3h^{-3}t)\bigr],
\end{align}
where $\tt_t$ denotes the operator of translation by $6h^{-2}t$.

Observe also that
\begin{equation}\label{357}
\supp\bigl( \widehat{\mathcal R c}\bigr) \subseteq  \bigl[-\tfrac\pi h,\tfrac \pi h\bigr]
	\qtq{and that} [\mathcal R c] (nh) = h^{-1}c_n.
\end{equation}
In view of the reconstruction formula \eqref{328}, these properties uniquely characterize the function $[\mathcal Rc](x)$.

\begin{lemma}\label{L:abc}
Suppose  $a_n, b_n, c_n \in \ell^2(\Z)$ all have Fourier support contained in the arc $[-1,1]$.  Then
\begin{equation}\label{E:abc}
\rr\bigl[ a b c\bigr](x) = h^2 \, \rr[ a ](x) \cdot \rr[ b ](x) \cdot \rr[ c ](x).
\end{equation}
\end{lemma}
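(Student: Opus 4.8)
The cleanest route is to pass everything to the Fourier side, where $\mathcal R$ and pointwise multiplication both have transparent descriptions. Recall from \eqref{R def} that $\widehat{\mathcal R c}(\xi) = \widehat c(h\xi)\id_{[-\pi/h,\pi/h]}(\xi)$, i.e.\ $\mathcal R$ simply rescales the frequency variable by $h$ and truncates. Since $\widehat c$ is supported in $[-1,1]$, the multiplier $\id_{[-\pi/h,\pi/h]}$ is harmless for $h\le 1$ (indeed for $h\le\pi$), so $\widehat{\mathcal R c}(\xi) = \widehat c(h\xi)$ is supported in $[-1/h,1/h]$. I would first record this, along with the analogous statements for $a,b$. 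The right-hand side of \eqref{E:abc} is then a product of three functions each bandlimited to $[-1/h,1/h]$, hence bandlimited to $[-3/h,3/h]$.

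Next I would compute the Fourier transform of each side. For the left-hand side, $\widehat{abc}(\theta)$ on the torus is the triple convolution $(\widehat a * \widehat b * \widehat c)(\theta)$ (with the $\tfrac{1}{2\pi}$ normalizations dictated by \eqref{ftc}), and since $\widehat a,\widehat b,\widehat c$ are each supported in $[-1,1]$, this triple convolution is supported in $[-3,3]$; because $3<\pi$, there is no aliasing/wraparound on the torus, so it coincides with the convolution computed on $\R$. Applying $\mathcal R$ rescales: $\widehat{\mathcal R[abc]}(\xi) = (\widehat a*\widehat b*\widehat c)(h\xi)$. For the right-hand side, $\widehat{\mathcal R a \cdot \mathcal R b \cdot \mathcal R c}$ is the triple convolution on $\R$ of $\widehat{\mathcal R a}, \widehat{\mathcal R b}, \widehat{\mathcal R c}$, again with the $\tfrac{1}{2\pi}$ factors from \eqref{ftr}. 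Plugging in $\widehat{\mathcal R a}(\eta)=\widehat a(h\eta)$ etc.\ and performing the change of variables $\eta_j \mapsto h\eta_j$ in the double integral defining the triple convolution produces exactly $(\widehat a * \widehat b * \widehat c)(h\xi)$ times a Jacobian factor of $h^{-2}$ (two independent integration variables, each contributing $h$), which accounts precisely for the $h^2$ on the right-hand side of \eqref{E:abc}. Matching the two computations gives \eqref{E:abc}.

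The only point requiring genuine care — and the one I would flag as the "main obstacle," though it is really a bookkeeping matter — is reconciling the two distinct Fourier conventions \eqref{ftr} and \eqref{ftc} and making sure the $2\pi$'s in the convolution normalizations (and in the $\mathrm d\theta/2\pi$ versus $\mathrm d\xi/2\pi$ measures) come out consistently on both sides, so that the claimed constant is exactly $h^2$ with no spurious powers of $2\pi$. A convenient way to sidestep convention-chasing entirely is the characterization in \eqref{357}: a function $g\in L^2(\R)$ with $\supp\widehat g\subseteq[-\pi/h,\pi/h]$ equals $\mathcal R c$ for $c_n = h\,g(nh)$. One checks that $h^2\,\mathcal R a\cdot\mathcal R b\cdot\mathcal R c$ has Fourier support in $[-3/h,3/h]\subseteq[-\pi/h,\pi/h]$ (using $h\le 1$), and that its value at $x=nh$ is $h^2\cdot h^{-1}a_n\cdot h^{-1}b_n\cdot h^{-1}c_n = h^{-1}a_nb_nc_n$, which by \eqref{357} is exactly $[\mathcal R(abc)](nh)$; by the sampling uniqueness statement following \eqref{328}, the two bandlimited functions agree everywhere. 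This version uses only Lemma~\ref{L:sums to int}, Lemma~\ref{L:R-opt}, and \eqref{357}, and avoids any explicit convolution computation.

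Either way the proof is short; I would present the sampling-uniqueness version as the main argument and perhaps remark that the Fourier-convolution computation gives the same answer. One should also state explicitly at the outset that we assume $0<h\le 1$ (e.g.\ via \eqref{h small}), which is what guarantees $[-3/h,3/h]\subseteq[-\pi/h,\pi/h]$ and hence the absence of aliasing.
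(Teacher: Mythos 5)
Your proposal is correct, and the version you designate as the main argument (both sides are bandlimited to $[-\pi/h,\pi/h]$, so by the sampling formula \eqref{328} it suffices to check equality at $x\in h\Z$, which follows from \eqref{357}) is exactly the paper's proof. The only nitpick is that the restriction $h\le 1$ is unnecessary: $[-3/h,3/h]\subseteq[-\pi/h,\pi/h]$ holds for every $h>0$ simply because $3<\pi$.
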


\begin{proof}
From the Fourier support restrictions on the three sequences, \eqref{R def} shows that each of the three functions on RHS\eqref{E:abc} has Fourier support in the interval $[-h^{-1},h^{-1}]$.  Thus the Fourier transform of their product is supported in $[-\pi h^{-1},\pi h^{-1}]$.  This interval also supports the Fourier transform of $\rr[abc]$.  In view of \eqref{328}, the equality \eqref{E:abc} for general $x$ follows from the case $x\in h\Z$, which in turn follows immediately from \eqref{357}.
\end{proof}

\begin{lemma}[Preservation of locality]\label{lem:reconstruction2}
Let $c \in \ell^2_n$ be a sequence supported in the interval $[\tau-\frac Lh,\tau+\frac Lh]$ for some $\tau\in\R$ and $L>0$.
Then for any $L'>0$ we have
\begin{align}\label{E:tighty}
\int_{|x-\tau h|\geq L+L'} \bigl|[\mathcal R c](x)\bigr|^2\,dx \lesssim \frac{L}{h L'} \| c \|_{\ell^2_n}^2 .
\end{align}
\end{lemma}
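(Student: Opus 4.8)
The plan is to reduce the decay estimate \eqref{E:tighty} to a statement about how much $L^2$ mass of a sine-cardinal reconstruction sits outside the "core" interval. Since $[\mathcal R c](x)=\sum_n \tfrac{\sin(\pi[x-nh]/h)}{\pi[x-nh]} c_n$ and the sum runs over $n$ with $|nh-\tau h|\leq L$, for $x$ with $|x-\tau h|\geq L+L'$ we always have $|x-nh|\geq L'$. The natural idea is to bound the tail by a weighted Schur-type estimate using the slow decay $\bigl|\tfrac{\sin(\pi[x-nh]/h)}{\pi[x-nh]}\bigr|\leq \tfrac1{\pi|x-nh|}$.

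The cleanest route, however, is to work on the Fourier side. First I would record that $g:=\mathcal R c$ satisfies $\widehat g(\xi)=\widehat c(h\xi)\id_{[-\pi/h,\pi/h]}(\xi)$ and $\|g\|_{L^2}^2=h^{-1}\|c\|_{\ell^2}^2$. The key observation is that translating $c$ by one lattice step corresponds to multiplying $\widehat g$ by $e^{ih\xi}$, i.e. to a shift of $g$ by $h$; more useful is that $\partial_\xi \widehat g$ on $(-\pi/h,\pi/h)$ equals (up to constants) the Fourier transform of the sequence $n\mapsto n c_n$ times $h$. Concretely, set $m_n := (nh-\tau h) c_n$, which is again in $\ell^2_n$ with $\|m\|_{\ell^2}\leq L h^{-1}\cdot h\,\|c\|_{\ell^2}$... let me instead use the sharper support bound: $\|m\|_{\ell^2} = \|(nh-\tau h)c_n\|_{\ell^2}\leq L\,\|c\|_{\ell^2}$ since $|nh-\tau h|\leq L$ on the support of $c$. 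Then $\mathcal R m$ is, away from the difficulty of the frequency cutoff, essentially $(x-\tau h)\,\mathcal R c(x)$: indeed differentiating the identity $\widehat{\mathcal R c}(\xi)=\widehat c(h\xi)\id$ shows that $(x-\tau h)[\mathcal R c](x)$ and $[\mathcal R m](x)$ have the same Fourier transform on the open interval $(-\pi/h,\pi/h)$, and the possible discrepancy is supported on the endpoints $\{\pm\pi/h\}$, which has measure zero — so in fact $(x-\tau h)[\mathcal R c](x)=[\mathcal R m](x)$ as $L^2$ functions. (One must be slightly careful: $\widehat c$ is $2\pi$-periodic and need not vanish at $\pm\pi$, so the cutoff genuinely matters; but since we only need an $L^2$ identity, boundary point-mass ambiguities are harmless.)

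Granting $(x-\tau h)[\mathcal R c](x) = [\mathcal R m](x)$ in $L^2$, the estimate \eqref{E:tighty} is immediate:
\begin{align*}
\int_{|x-\tau h|\geq L+L'} \bigl|[\mathcal R c](x)\bigr|^2\,dx
 &\leq \frac{1}{(L+L')^2}\int_{|x-\tau h|\geq L+L'} \bigl|(x-\tau h)[\mathcal R c](x)\bigr|^2\,dx\\
 &\leq \frac{1}{(L+L')^2}\,\|\mathcal R m\|_{L^2}^2
  = \frac{1}{(L+L')^2}\,h^{-1}\|m\|_{\ell^2_n}^2
  \leq \frac{L^2}{(L+L')^2}\cdot h^{-1}\|c\|_{\ell^2_n}^2,
\end{align*}
and since $\tfrac{L^2}{(L+L')^2}\leq \tfrac{L^2}{LL'}=\tfrac{L}{L'}$ we obtain \eqref{E:tighty} with an absolute constant (in fact with constant $1$).

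The main obstacle is justifying the identity $(x-\tau h)[\mathcal R c](x)=[\mathcal R m](x)$ rigorously despite the sharp frequency truncation at $\pm\pi/h$: the differentiation-under-the-Fourier-transform argument is only valid on the open interval, and one should check that no boundary contribution arises in the $L^2$ sense. An alternative, entirely elementary, fallback avoiding this subtlety is to argue directly from the series: use the Schur test on the kernel $K(x,n)=\tfrac{\sin(\pi[x-nh]/h)}{\pi(x-nh)}$ restricted to $|x-\tau h|\geq L+L'$ and $|nh-\tau h|\leq L$, bounding $|K(x,n)|\leq \tfrac1{\pi|x-nh|}$ and exploiting that $|x-nh|\geq \max(L', |x-\tau h|-L)$, then integrate in $x$ and sum in $n$; this gives the same bound up to a constant. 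I would present the Fourier-side argument as the main proof, since it is shortest, and it also makes transparent why $L/h$ (the number of lattice points in the support, times $h^{-1}$ normalization) is the right scaling.
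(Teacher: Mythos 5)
Your main (Fourier-side) argument has a genuine gap: the identity $(x-\tau h)[\mathcal R c](x)=[\mathcal R m](x)$ is false in general, and the boundary contributions you dismiss as harmless are exactly the obstruction. Differentiating $\widehat{\mathcal R c}(\xi)=\widehat c(h\xi)\id_{[-\pi/h,\pi/h]}(\xi)$ distributionally produces, besides $\widehat{\mathcal R m}$, the term $i\,\widehat c(\pi)\bigl[\delta_{-\pi/h}-\delta_{\pi/h}\bigr]$ (using $\widehat c(-\pi)=\widehat c(\pi)$), so that in fact $(x-\tau h)[\mathcal R c](x)=[\mathcal R m](x)+\tfrac{\widehat c(\pi)}{\pi}\sin(\pi x/h)$ up to sign conventions. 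This extra term is a bounded, non-decaying oscillation, not a measure-zero ambiguity: for $h=1$, $\tau=0$, $c_n=\delta_{n0}$ one has $m\equiv 0$ while $x\,[\mathcal R c](x)=\sin(\pi x)/\pi$, which is not in $L^2$. Consequently the step $\int_{|x-\tau h|\geq L+L'}\bigl|(x-\tau h)\mathcal R c\bigr|^2\,dx\leq\|\mathcal R m\|_{L^2}^2$ fails (the left-hand side can be infinite), and your "constant $1$" bound collapses. The argument is repairable if you carry the boundary term along: since $|\widehat c(\pi)|\leq\sum_n|c_n|\lesssim (L/h)^{1/2}\|c\|_{\ell^2}$, its contribution to the tail integral is $\lesssim |\widehat c(\pi)|^2/(L+L')\lesssim \tfrac{L}{hL'}\|c\|_{\ell^2}^2$, which is acceptable — but as written the proof is not correct.

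Your fallback is essentially the paper's proof, with one caveat: a literal Schur test with the bound $|K(x,n)|\leq\tfrac1{\pi|x-nh|}$ does not close, because the column integrals $\int_{|x-nh|\geq L'}\tfrac{dx}{|x-nh|}$ diverge logarithmically. What works (and is what the paper does) is Minkowski's integral inequality, $\bigl\|\sum_n c_nK(\cdot,n)\bigr\|_{L^2(\text{tail})}\leq\sum_n|c_n|\bigl(\int_{|x-nh|\geq L'}|K(x,n)|^2dx\bigr)^{1/2}\lesssim (L')^{-1/2}\sum_n|c_n|$, followed by Cauchy--Schwarz over the $\sim L/h$ lattice sites in the support of $c$, which yields exactly $\tfrac{L}{hL'}\|c\|_{\ell^2}^2$. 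So either present the corrected Fourier argument with the boundary term estimated explicitly, or carry out the kernel argument via Minkowski plus Cauchy--Schwarz rather than a Schur test.
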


\begin{proof}
For $\abs{n-\tau}\leq h^{-1}L$ and $|x-\tau h|\geq L+L'$ we have $|x-nh|\geq L'$. Applying the Minkowski and H\"older inequalities, we deduce
\begin{align*}
    \int_{|x-\tau h|\geq L+L'} \bigl|[\mathcal R c](x)\bigr|^2\,dx
    &\ls \left\{\!\sum_{\abs{n-\tau}\leq h^{-1}L} \!\!\!\!\abs{c_n} \bigg(\int_{|x-nh|\geq L'} \Bigl| \frac{\sin(\tfrac{\pi}{h}(x-nh))}{\pi(x-nh)}\Bigr|^2\,dx\bigg)^{\frac{1}{2}}\right\}^{2} \\
    &\ls \frac{1}{L'} \bigg(\sum_{\abs{n-\tau}\leq h^{-1}L} \abs{c_n} \bigg)^{2}\\
    &\ls \frac{1}{L'} \Big( h^{-\frac12} L^\frac12 \| c \|_{\ell^2_n}\Big)^{2}
    = \frac{L}{h L'} \| c \|_{\ell^2_n}^2. \qedhere
\end{align*}
\end{proof}

Let us now turn our attention to the mapping of initial data, as given in \eqref{E:initial data}. The $L^2\to\ell^2$ boundedness of this mapping is evident from Lemma~\ref{L:sums to int}:
\begin{align}\label{alpha0 mass}
\|\alpha(0)\|_{\ell_n^2}^2\leq h\|\phi_0\|_{L^2}^2.
\end{align}

Applying the Poisson summation formula 
\begin{align*}
    \sum_{m\in\Z}\widehat{f}\left(\tfrac{\theta+2\pi m}{h}\right) = \sum_{n\in\Z} hf(nh)e^{-i n\theta}
\end{align*}
to the function $f = P_{\leq \frac{\pi}{2h}}\phi_0$, we find
\begin{align} \label{a0hat}
\widehat{P_{\leq \frac{\pi}{2h}}\phi_0}\bigl(\tfrac\theta h\bigr) = \id_{[-\pi,\pi]}(\theta) \sum \alpha_n(0) e^{-\ui n\theta} =  \id_{[-\pi,\pi]} \widehat\alpha(0,\theta).
\end{align}

For our arguments in Section~\ref{S:4}, we need to control the spatial distribution of $\alpha_n(0)$ uniformly as $h\to 0$.  This is the topic of our next lemma.

\begin{lemma}[Locality of $\alpha_n(0)$]\label{L:locA}
Fix $\phi_0\in L^2$ and let $\alpha_n (0) = h \big[P_{\leq \frac\pi {2h}} \phi_0\big] (hn)$, as in \eqref{E:initial data}.  Then
\begin{align}\label{E:locA}
\bigl\|\alpha_n(0)\bigr\|^2_{\ell^2(|n|\geq m)} \lesssim h \bigl\| \mathcal M \phi_0\bigr\|_{L^2(|x|\geq [m-\frac12] h)}^2 ,
\end{align}
uniformly for $h\in(0,1]$ and $m\in \N$.  Here, $\mathcal M\phi_0$ denotes the Hardy--Littlewood maximal function of $\phi_0$.
\end{lemma}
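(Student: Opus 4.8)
The plan is to exploit the explicit kernel $K_h(y) := h^{-1}[P_{\le \frac\pi{2h}}\delta_0](y)$ of the operator $P_{\le \frac\pi{2h}}$, so that $\alpha_n(0) = h\,(\phi_0 * K_h\cdot h)(hn)$; more precisely, $[P_{\le\frac\pi{2h}}\phi_0](x) = \int \phi_0(y)\,\kappa_h(x-y)\,dy$ where $\widehat{\kappa_h}(\xi) = \varphi(\tfrac{2h\xi}{\pi})$ (with $\varphi$ the Littlewood--Paley bump), hence $\kappa_h(z) = \tfrac1h\kappa_1(z/h)$ and $\kappa_1$ is a fixed Schwartz function. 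First I would record the pointwise bound $|\kappa_1(z)| \lesssim_N \langle z\rangle^{-N}$ for every $N$, which follows from $\widehat{\kappa_1}\in C_c^\infty$. Thus $|\kappa_h(z)| \lesssim h^{-1}\langle z/h\rangle^{-N}$.

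Next I would estimate $|\alpha_n(0)| = h\,\bigl|\int\phi_0(y)\kappa_h(hn-y)\,dy\bigr|$ for $|n|\ge m$ by splitting the $y$-integral into the region $|y| \le [m-\tfrac12]h$ and its complement. On the far region $|y| > [m-\tfrac12]h$ one simply bounds the convolution against $\mathcal M\phi_0$: standard maximal-function domination of convolution with an $L^1$-normalized radially-decreasing-majorant kernel gives $\bigl|\int_{|y|>[m-\frac12]h}\phi_0(y)\kappa_h(hn-y)\,dy\bigr| \lesssim \bigl[\mathcal M(\phi_0\,\id_{|y|>[m-\frac12]h})\bigr](hn) \le [\mathcal M\phi_0](hn)$, but we actually want the output localized too, so I would instead keep this term as $\lesssim [\mathcal M(\phi_0\id_{|\cdot|>[m-\frac12]h})](hn)$ and later use the $\ell^2$-to-$L^2$ comparison. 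On the near region $|y|\le[m-\tfrac12]h$, since $|hn|\ge mh$ we have $|hn-y|\ge \tfrac12 h$ and more usefully $|hn - y|\gtrsim h(|n| - m + \tfrac12) + (\text{a piece})$; here the decay $\langle z/h\rangle^{-N}$ of $\kappa_h$ with $N$ large makes the contribution summable and small. Concretely, for $|n|\ge m$, $\int_{|y|\le [m-\frac12]h}|\phi_0(y)|\,h^{-1}\langle (hn-y)/h\rangle^{-N}dy \lesssim \langle |n|-m\rangle^{-N+1}\|\mathcal M\phi_0\|_{L^\infty}$ or, better, one Cauchy--Schwarz's in $y$ to land on $\|\phi_0\|_{L^2}$ with a gain $\langle |n|-m\rangle^{-N+1}h^{-1/2}$, so that summing $|\alpha_n(0)|^2=h^2|\cdots|^2$ over $|n|\ge m$ produces $\lesssim h\sum_{|n|\ge m}\langle|n|-m\rangle^{-2N+2}\|\phi_0\|_{L^2(|y|\le[m-\frac12]h)}^2$ — but this still references $\|\phi_0\|_{L^2}$ globally, which is too weak.

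The cleaner route, which I would actually pursue, is: write $\alpha_n(0) = h\,[P_{\le\frac\pi{2h}}(\phi_0\id_{|y|>[m-1]h})](hn) + h\,[P_{\le\frac\pi{2h}}(\phi_0\id_{|y|\le[m-1]h})](hn)$. For the first term, apply \eqref{alpha0 mass}-type reasoning (Lemma~\ref{L:sums to int}, or rather its one-sided $\ell^2\le h L^2$ estimate via Lemma~\ref{L:R-opt}): $\sum_{|n|\ge m} h^2 |[P_{\le\frac\pi{2h}}g](hn)|^2 \le h\|P_{\le\frac\pi{2h}}g\|_{L^2}^2 \le h\|g\|_{L^2}^2$ with $g=\phi_0\id_{|y|>[m-1]h}$, and $\|g\|_{L^2} = \|\phi_0\|_{L^2(|y|>[m-1]h)} \le \|\mathcal M\phi_0\|_{L^2(|y|\ge[m-\frac12]h)}$ after shrinking the region slightly (indeed $|\phi_0|\le \mathcal M\phi_0$ a.e., so even $\|\phi_0\|_{L^2(|y|>[m-1]h)}\le\|\mathcal M\phi_0\|_{L^2(|y|>[m-1]h)}$ — I will reconcile the $[m-1]$ vs $[m-\tfrac12]$ discrepancy by absorbing the short collar $[m-1]h\le|y|\le[m-\frac12]h$ into the second, rapidly-decaying term). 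For the second term, $g=\phi_0\id_{|y|\le[m-1]h}$ is supported at distance $\ge \tfrac12 h\cdot\text{(separation)}$ from every $hn$ with $|n|\ge m$; using $|\kappa_h(hn-y)|\lesssim_N h^{-1}\langle(hn-y)/h\rangle^{-N}$ and $|(hn-y)/h|\ge |n|-m+\tfrac12 \ge \tfrac12$, Cauchy--Schwarz in $y$ over the ball of radius $[m-1]h$ gives $h^2|[P_{\le\frac\pi{2h}}g](hn)|^2 \lesssim_N h\langle |n|-m\rangle^{-2N+1}\|\phi_0\|_{L^2}^2$, and summing over $|n|\ge m$ with $N$ large yields $\lesssim_N h\langle 0\rangle^{-2N+2}\|\phi_0\|_{L^2}^2$, which is \emph{not} small but is dominated by the first-term bound once we note the collar argument: the second term's mass actually concentrates where $|n|-m$ is $O(1)$, i.e. $|hn|$ is within $O(h)$ of the cutoff radius, so it too is controlled by $\|\mathcal M\phi_0\|_{L^2(|x|\ge[m-\frac12]h)}$ after enlarging the collar — here I use that $\mathcal M\phi_0$ at a point near the cutoff radius already sees the mass of $\phi_0$ on both sides. \emph{The main obstacle} is precisely this bookkeeping: decay of $\kappa_h$ controls the ``leakage'' across the cutoff, but the output must be bounded by the \emph{maximal function} on the far region rather than by $\|\phi_0\|_{L^2}$ globally; the trick is that the maximal function at a point $x$ with $|x|\gtrsim [m-\frac12]h$ already dominates averages of $|\phi_0|$ over balls reaching back into $|y|<[m-1]h$ as soon as the ball radius is $\gtrsim h$, which matches the scale on which $\kappa_h$ has decayed, so the two effects combine. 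I would make this rigorous by, for each $n$ with $|n|\ge m$, bounding $h^{-1}\int|\phi_0(y)|\langle(hn-y)/h\rangle^{-N}dy$ by a sum over dyadic annuli $|hn-y|\sim 2^j h$ of $2^{-jN}\cdot(2^jh)^{-1}\int_{|hn-y|\lesssim 2^jh}|\phi_0| \lesssim \sum_j 2^{-jN}[\mathcal M\phi_0](hn)$ — but crucially restricting the annuli to those with $2^jh \lesssim |hn| - [m-\frac12]h + 2^jh$ so the mass lies in $|y|>[m-1]h$ when $2^j$ is small, and invoking the maximal function (which ignores where mass sits) when $2^j$ is large. Summing $h^2\cdot(\cdots)^2 \lesssim h[\mathcal M\phi_0(hn)]^2$ over $|n|\ge m$ and comparing the $\ell^2$ sum of $[\mathcal M\phi_0(hn)]^2$ (over $|hn|\ge [m-\frac12]h$) to $h^{-1}\|\mathcal M\phi_0\|_{L^2(|x|\ge[m-\frac12]h)}^2$ — valid because $\mathcal M\phi_0$ has a doubling/slowly-varying majorant so the Riemann sum is comparable to the integral on a slightly enlarged set — completes the proof. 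Throughout, the implicit constants depend only on the fixed profile $\varphi$, hence are uniform in $h\in(0,1]$ and $m\in\N$.
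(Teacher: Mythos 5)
Your first half (splitting $\phi_0$ across the cutoff, trying to reuse the $\ell^2\!\le hL^2$ sampling bound) is not needed and you correctly abandon it: the point of the lemma is that the restriction sits on $\mathcal M\phi_0$, not on $\phi_0$, so no decomposition of $\phi_0$ is required. The route you finally settle on --- kernel $\kappa_h(z)=h^{-1}\kappa_1(z/h)$ with $\kappa_1$ Schwartz, dyadic annuli, and the standard domination of a scale-$h$, $L^1$-normalized convolution by the Hardy--Littlewood maximal function --- is the right circle of ideas. But the last step has a genuine gap: you sum $h\,[\mathcal M\phi_0(hn)]^2$ over $|n|\ge m$ and assert this Riemann sum is $\lesssim \|\mathcal M\phi_0\|_{L^2(|x|\ge[m-\frac12]h)}^2$ ``because $\mathcal M\phi_0$ has a doubling/slowly-varying majorant.'' That is false in general: point values of the maximal function on the lattice are not controlled by its $L^2$ norm. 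For instance, if $\phi_0=\eps^{-1/2}\id_{[hn_0,hn_0+\eps]}$ with $|n_0|\ge m$ and $\eps\ll h$, then $h\,[\mathcal M\phi_0(hn_0)]^2\gtrsim h/\eps\to\infty$ while $\|\mathcal M\phi_0\|_{L^2}^2\lesssim\|\phi_0\|_{L^2}^2=1$. So after you have discarded the information that $\alpha_n(0)$ is an average of $\phi_0$ at scale $h$ (which does not see such sub-$h$ spikes) and retained only the point value $\mathcal M\phi_0(hn)$, the desired conclusion no longer follows.

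The repair is exactly where the paper's proof differs from yours: do not pass to the point value $\mathcal M\phi_0(hn)$, but to the \emph{cell average} of $\mathcal M\phi_0$. The paper writes $\alpha_n(0)=h\int N\check\varphi(Ny)\phi_0(nh-y)\,dy$ with $N=\tfrac{\pi}{2h}$ and inserts an average over a shift $x\in[-\tfrac h2,\tfrac h2]$, using $N|\check\varphi(N[y+x])|\lesssim N(1+N^2y^2)^{-1}$ uniformly for $|x|\le N^{-1}$; this yields $|\alpha_n(0)|\lesssim\int_{-h/2}^{h/2}[\mathcal M\phi_0](nh-x)\,dx$, i.e.\ the average of $\mathcal M\phi_0$ over $[nh-\tfrac h2,nh+\tfrac h2]$ rather than its value at $nh$. (Equivalently: your scale-$h$ kernel centered at $hn$ is dominated, up to constants, by maximal averages centered at \emph{any} point of that cell.) Then Cauchy--Schwarz gives $|\alpha_n(0)|^2\lesssim h\int_{nh-h/2}^{nh+h/2}|\mathcal M\phi_0|^2\,dx$, and summing over $|n|\ge m$ uses only the disjointness of the cells, which cover $\{|x|\ge[m-\tfrac12]h\}$ --- no Riemann-sum-to-integral comparison is ever invoked. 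If you replace your final step by this cell-averaging argument, your proof closes and coincides in substance with the paper's.
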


\begin{proof}
For any Schwartz function $\check\varphi$, we have
$$
N \bigl| \check\varphi(N[y+x]) \bigr| \lesssim \frac{N}{1+N^2y^2} \qtq{uniformly for} |x|\leq N^{-1} \qtq{and} 0<N<\infty.  
$$
In particular, we may apply this to the convolution kernel associated to Littlewood--Paley projections and we may choose $N=\frac\pi {2h}$.

The specific instance to which we apply this observation is
\begin{align*}
h \big[P_{\leq \frac\pi {2h}} \phi_0\big] (hn) &= h \int N\check\varphi(Ny) \phi_0(nh-y)\,dy \\
&= \int_{-\frac h2}^{\frac h2}\int N\check\varphi(N[y+x]) \phi_0(nh-y-x)\,dy\,dx.
\end{align*}
In this way, we obtain the estimate
\begin{align*}
\bigl|\alpha_n(0)\bigr| \lesssim \int_{-\frac h2}^{\frac h2}\int \frac{N|\phi_0(nh-y-x)|}{1+N^2y^2} \,dy\,dx
	\lesssim \int_{-\frac h2}^{\frac h2} [\mathcal M \phi_0](nh-x) \,dx .
\end{align*}
Applying the Cauchy--Schwarz inequality, we deduce that
\begin{align*}
\bigl|\alpha_n(0)\bigr|^2 \lesssim h \int_{-\frac h2}^{\frac h2} \bigl| [\mathcal M \phi_0](nh-x) \bigr|^2 \,dx 
	= h \int_{nh-\frac h2}^{nh+\frac h2} \bigl| [\mathcal M \phi_0](x) \bigr|^2 \,dx.
\end{align*}
The estimate \eqref{E:locA} now follows by summing over $|n|\geq m$.
\end{proof}

Next we record the Strichartz estimates for the Airy propagator and their discrete analogues.

\begin{lemma}[Strichartz estimates; \cite{GV,Kenig.Ponce.Vega1991}] \label{P:Strichartz}
If $I$ is a time interval containing zero, then 
\begin{align} \label{Strichartz-ineq}
\Big\|\int_0^t e^{ -(t-s)\D}F(s)\,\ud s \Big\|_{L_t^\infty L^2_x(I\times\R)} 
\ls \bnm{|\nabla|^{-\frac{1}{8}}F}_{L_t^{\frac{8}{7}}L_x^{\frac{4}{3}}(I\times\R)}
\end{align}
and 
\begin{align} \label{Strichartz-ineq-d}
\Big\|\int_0^t e^{ \ui(t-s)\Ld}F(s)\,\ud s \Big\|_{L_t^\infty \l_n^2(I\times\Z)} 
\ls \bnm{\FDds^{-\frac{1}{8}}F}_{L_t^{\frac{8}{7}}\l_n^{\frac{4}{3}}(I\times\Z)},
\end{align}
where $\Ld$ and $\FDds^{-\frac18}$ are discrete operators with Fourier symbols given by $\Ld(\theta)= -2\sin(\theta)$ and $\FDds(\theta)=|\sin(\theta)|$.
\end{lemma}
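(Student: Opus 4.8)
The plan is to derive both \eqref{Strichartz-ineq} and \eqref{Strichartz-ineq-d} from the standard recipe for Strichartz estimates of a dispersive propagator: (i)~a pointwise dispersive decay bound together with its frequency-localized refinement; (ii)~interpolation with the $L^2$ (resp.\ $\ell^2$) unitarity of the flow, giving time-decay of the propagator between dual Lebesgue spaces; (iii)~a $TT^*$ argument combined with the Hardy--Littlewood--Sobolev inequality in the time variable, carried out on each dyadic frequency block and summed with the Littlewood--Paley square function; and (iv)~the Christ--Kiselev lemma, to pass from the untruncated operator $\int_\R$ to the retarded operator $\int_0^t$. The continuum estimate \eqref{Strichartz-ineq} is exactly of the type established in \cite{GV,Kenig.Ponce.Vega1991}, so our real task is to record the argument in a form that makes its transfer to the lattice transparent.

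For \eqref{Strichartz-ineq} I would begin with stationary phase for the oscillatory integral with phase $x\xi+t\xi^3$: this supplies the Airy dispersive bound $\|e^{-t\D}f\|_{L^\infty_x}\ls|t|^{-1/3}\|f\|_{L^1_x}$ and the localized refinement $\|e^{-t\D}P_Nf\|_{L^\infty_x}\ls(N|t|)^{-1/2}\|P_Nf\|_{L^1_x}$, the latter because $\p_\xi^2(x\xi+t\xi^3)=6t\xi$ has magnitude $\simeq N|t|$ on $\supp\widehat{P_Nf}$. Interpolating each with $\|e^{-t\D}f\|_{L^2_x}=\|f\|_{L^2_x}$ gives $\|e^{-t\D}P_Nf\|_{L^4_x}\ls\min\{|t|^{-1/6},(N|t|)^{-1/4}\}\|P_Nf\|_{L^{4/3}_x}$; a $TT^*$ computation, Minkowski's inequality in the $s$-integral, and Hardy--Littlewood--Sobolev applied to the resulting time-convolution kernel --- which is dominated by $N^{-1/4}|\tau|^{-1/4}$ --- should then yield $\|e^{-t\D}P_Nu_0\|_{L^8_tL^4_x}\ls N^{-1/8}\|P_Nu_0\|_{L^2_x}$. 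Multiplying by $N^{1/8}$ and summing the dyadic pieces through the square-function inequality (legitimate since $4,8\geq2$) gives the homogeneous estimate $\big\||\nabla|^{1/8}e^{-t\D}u_0\big\|_{L^8_tL^4_x}\ls\|u_0\|_{L^2_x}$. Dualizing it produces $\big\|\int_\R e^{s\D}F(s)\,ds\big\|_{L^2_x}\ls\big\||\nabla|^{-1/8}F\big\|_{L^{8/7}_tL^{4/3}_x}$; since $e^{-t\D}$ is unitary on $L^2_x$ this also bounds $\big\|\int_\R e^{-(t-s)\D}F(s)\,ds\big\|_{L^\infty_tL^2_x}$ by the same quantity, and because $\tfrac87<\infty$ the Christ--Kiselev lemma upgrades the latter to \eqref{Strichartz-ineq}.

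For \eqref{Strichartz-ineq-d} I would run the identical argument, the only genuinely new input being the two \emph{discrete} dispersive estimates. The convolution kernel of $e^{\ui t\Ld}$ is $\frac1{2\pi}\int_{-\pi}^{\pi}e^{\ui(n\theta-2t\sin\theta)}\,d\theta=J_n(2t)$, so the classical uniform Bessel bound $\sup_{n\in\Z}|J_n(z)|\ls|z|^{-1/3}$ gives $\|e^{\ui t\Ld}f\|_{\ell^\infty_n}\ls|t|^{-1/3}\|f\|_{\ell^1_n}$; the $|t|^{-1/3}$ rate is the signature of the cubic degeneracy of the dispersion relation $\omega_\text{mAL}(\theta)=2\sin\theta$ at its inflection points $\theta\in\{0,\pm\pi\}$. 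For the refinement I would take $P_\mu$ to be a Littlewood--Paley projection onto $\{|\sin\theta|\simeq\mu\}$; since $\Ld''(\theta)=2\sin\theta$ we have $|\Ld''|\simeq\mu$ on $\supp\widehat{P_\mu f}$, so van der Corput's lemma gives $\|e^{\ui t\Ld}P_\mu f\|_{\ell^\infty_n}\ls(\mu|t|)^{-1/2}\|P_\mu f\|_{\ell^1_n}$ uniformly in $n$. (The operator $\FDds$ was given the symbol $|\sin\theta|$ precisely to measure $|\Ld''|$, which is why the weight $\FDds^{-1/8}$ in \eqref{Strichartz-ineq-d} plays the role of $|\nabla|^{-1/8}$ in \eqref{Strichartz-ineq}.) With these two bounds in place, interpolation with the $\ell^2$-unitarity of $e^{\ui t\Ld}$, the same $TT^*$/Hardy--Littlewood--Sobolev computation on each block, summation through the $\mu$-adapted square function (the multipliers involved are smooth on the torus, hence bounded on every $\ell^p$), and Christ--Kiselev deliver \eqref{Strichartz-ineq-d}.

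I expect the only non-routine point --- and hence the crux --- to be the discrete dispersive estimates: one must confirm that the stationary-phase and van der Corput analysis on the torus is uniform right down to the degenerate frequencies and returns precisely the Airy exponents. This is where the geometry of $\omega_\text{mAL}$ enters: $\Ld''$ vanishes only to first order, and only at $\theta\in\{0,\pm\pi\}$, so the global worst case is exactly $|t|^{-1/3}$, and the natural ``frequency size'' for the refined estimate is $|\sin\theta|$ --- which dictates the choice of $\FDds$. Everything downstream of these two bounds is insensitive to the distinction between lattice and line.
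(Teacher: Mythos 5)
Your proposal is correct and follows essentially the same route as the paper, which simply invokes the Airy Strichartz estimates of \cite{GV,Kenig.Ponce.Vega1991} for \eqref{Strichartz-ineq} and observes that the identical stationary-phase/$TT^*$ machinery transfers to the lattice because $|\Ld''(\theta)|\simeq|\sin\theta|$, which is exactly the role your frequency-localized van der Corput bound and the weight $\FDds^{-\frac18}$ play. The details you supply (Bessel-kernel decay $|t|^{-1/3}$, the $(\mu|t|)^{-1/2}$ refinement, HLS in time, discrete Littlewood--Paley summation, Christ--Kiselev) are precisely the intended implementation of the paper's two-sentence proof.
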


\begin{proof}
The estimate \eqref{Strichartz-ineq} is a particular example of the Strichartz estimates for the Airy propagator $e^{-t\D}$ derived in \cite{GV,Kenig.Ponce.Vega1991}. The same arguments show that the discrete propagator $e^{\ui t\Ld}$ enjoys the parallel Strichartz estimate \eqref{Strichartz-ineq-d}.  Note that in the discrete setting, the dispersion relation $\Ld(\theta)$ has inflection points at both $\theta=0$ and $\theta=\pm\pi$ with $\babs{\Ld''(\theta)}\simeq\abs{\sin(\theta)}$, which explains the presence of $\FDds^{-\frac18}$ on the right-hand side of \eqref{Strichartz-ineq-d}.
\end{proof}

\begin{lemma}[Paraproduct estimates] \label{L:paraproduct}
For functions on the real line we have
\begin{align} \label{paraproduct-1}
    \bnm{|\nabla|^{-\frac{1}{8}}(fgh)}_{L^{\frac{4}{3}}(\R)} \ls 
    \bnm{|\nabla|^{-\frac{1}{8}}f}_{L^{2}(\R)}\bnm{|\nabla|^{\frac{3}{8}}g}_{L^{2}(\R)}\bnm{|\nabla|^{\frac{3}{8}}h}_{L^{2}(\R)},
\end{align}
while in the discrete setting we have the parallel estimate
\begin{align} \label{paraproduct-2}
    \bnm{\FDds^{-\frac{1}{8}}(fgh)}_{\ell^{\frac{4}{3}}(\Z)} \ls 
    \bnm{\FDds^{-\frac{1}{8}}f}_{\ell^{2}(\Z)}\bnm{\FDds^{\frac{3}{8}}g}_{\ell^2(\Z)}\bnm{\FDds^{\frac{3}{8}}h}_{\ell^2(\Z)},
\end{align}
where $\FDds$ is the discrete operator with Fourier symbol $\FDds(\theta)=|\sin(\theta)|$.
\end{lemma}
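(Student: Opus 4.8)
The plan is to factor the trilinear estimate \eqref{paraproduct-1} through the two bilinear estimates on $\R$
\begin{align}
  \bnm{|\nabla|^{\frac14}(gh)}_{L^2} &\ls \bnm{|\nabla|^{\frac38}g}_{L^2}\bnm{|\nabla|^{\frac38}h}_{L^2}, \label{E:sk1}\\
  \bnm{|\nabla|^{-\frac18}(fG)}_{L^{\frac43}} &\ls \bnm{|\nabla|^{-\frac18}f}_{L^2}\bnm{|\nabla|^{\frac14}G}_{L^2}. \label{E:sk2}
\end{align}
Granting \eqref{E:sk1}--\eqref{E:sk2}, the estimate \eqref{paraproduct-1} follows by applying \eqref{E:sk2} with $G=gh$ and then \eqref{E:sk1}. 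Here $\dot H^{1/4}$ is the sharp product space for $\dot H^{3/8}(\R)\cdot\dot H^{3/8}(\R)$ in one dimension, and the embeddings $\dot H^{3/8}(\R)\hookrightarrow L^8(\R)$ and $\dot H^{1/4}(\R)\hookrightarrow L^4(\R)$, which we will use below, are consistent with the scaling of \eqref{paraproduct-1}.

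A crude application of Hardy--Littlewood--Sobolev followed by H\"older is not enough here (it would require $\dot H^{-1/8}(\R)\hookrightarrow L^{8/5}(\R)$, which is false), so the paraproduct structure is genuinely needed. I would prove each of \eqref{E:sk1}--\eqref{E:sk2} by a Littlewood--Paley decomposition: writing $f=\sum_N f_N$, $g=\sum_N g_N$, and so on with $N$ dyadic, split the product according to which input frequency is largest, distinguishing the \emph{unbalanced} regimes, in which the largest input frequency strictly dominates so that the output sits at that frequency, from the \emph{balanced} regime, in which the two largest input frequencies are comparable so that the output frequency $L$ may run all the way down to zero. In each unbalanced regime the output derivative --- namely $|\nabla|^{-1/8}$ for \eqref{E:sk2}, and $|\nabla|^{1/4}$ (of size $N^{1/4}$ at the output frequency) for \eqref{E:sk1} --- combines with the frequency localizations; one then closes using the Bernstein inequalities, the Sobolev embeddings above, the pointwise bound $\babs{P_{\ll N}u}\ls\mathcal M u$ for the low-frequency factors (with $\mathcal M$ the Hardy--Littlewood maximal function), the boundedness of $\mathcal M$ on $L^p$, and the Littlewood--Paley square-function estimate (to carry out the dyadic sum by Cauchy--Schwarz and almost orthogonality). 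A structural constraint to respect throughout is that the rough factor, which lives only in $\dot H^{-1/8}$, must retain its $|\nabla|^{-1/8}$: it cannot be placed in any $L^p$ space, nor dominated by a maximal function, and this is exactly what pairs the negative output derivative with that factor and dictates the case organization.

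The balanced regime is the heart of the matter, and the step I expect to require the most care. There two of the factors sit at a common frequency $\sim N$ while the output is at a frequency $L\ls N$ that may be far smaller; the scale-invariance of \eqref{E:sk1}--\eqref{E:sk2} leaves no smoothness to spare, and for \eqref{E:sk2} the operator $|\nabla|^{-1/8}$ moreover amplifies the low-frequency output. The remedy is to estimate $\bnm{P_L(\cdot)}$ of the relevant product at each output scale $L\ls N$ using the Bernstein inequality starting from $L^1$, which contributes a favorable factor $L^{1/4}$; summing the resulting geometric series over the dyadic scales $L\ls N$ --- a series dominated by its endpoint $L\sim N$ --- makes the powers of $N$ cancel identically, leaving a sum over $N$ that is then controlled by Cauchy--Schwarz together with Plancherel, i.e.\ by the almost orthogonality of the pieces $f_N$ and of the pieces $G_{\sim N}$ (respectively $g_N$ and $h_{\sim N}$). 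Executing this summation without falling back on crude termwise bounds is the delicate point. (One could also argue by a direct trilinear paraproduct decomposition, but factoring through $\dot H^{1/4}$ keeps the bookkeeping lighter.)

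Finally, \eqref{paraproduct-2} follows by repeating the argument on the lattice. On the Fourier side one works on the torus with symbol $\FDds(\theta)=\abs{\sin\theta}$, and since $\abs{\sin\theta}\simeq\abs{\theta}$ for $\abs{\theta}\le\pi$, the operator $\FDds^{s}$ is, on each dyadic block $\{\abs{\theta}\sim 2^{-j}\}$, comparable to $2^{-js}$, just as $|\nabla|^{s}$ is for frequencies $\ls1$ on $\R$. The discrete Littlewood--Paley theory, the Bernstein inequalities on $\ell^p(\Z)$, the embeddings $\pem{u}{8}\ls\bnm{\FDds^{3/8}u}_{\ell^2}$ and $\pem{u}{4}\ls\bnm{\FDds^{1/4}u}_{\ell^2}$, and the $\ell^p$-boundedness of the discrete Hardy--Littlewood maximal function all hold --- and can be transferred from their Euclidean counterparts block by block via this comparability --- so the whole proof carries over verbatim.
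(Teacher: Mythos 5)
Your continuum argument is a legitimate alternative to the paper's: the paper proves \eqref{paraproduct-1} directly by dualizing against an $L^4$ function and running a trilinear Littlewood--Paley case analysis (low-frequency projections dominated by the maximal function, square-function estimates, Bernstein, Schur's test), whereas you factor through the two bilinear estimates $\dot H^{3/8}\cdot\dot H^{3/8}\to\dot H^{1/4}$ and $\dot H^{-1/8}\cdot\dot H^{1/4}\to |\nabla|^{1/8}L^{4/3}$. Both of your bilinear estimates are true, the scaling bookkeeping checks out, and the balanced-case summation you describe (Bernstein from $L^1$ giving $L^{1/4}$, geometric sum in $L$, Cauchy--Schwarz in $N$) does close, so modulo routine detail this half is fine and arguably lighter than the paper's trilinear bookkeeping.

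The discrete half, however, has a genuine gap. The claim ``$\abs{\sin\theta}\simeq\abs{\theta}$ for $\abs{\theta}\le\pi$'' is false: in fact $\abs{\sin\theta}\simeq \mathrm{dist}(\theta,\pi\Z)$, so the symbol of $\FDds$ degenerates at $\theta=\pm\pi$ as well as at $\theta=0$. Consequently $\FDds^{-1/8}$ is \emph{large} near $\pm\pi$ and $\FDds^{3/8}$ is \emph{small} there; a sequence of the form $g_n=(-1)^n v_n$ with $v$ slowly varying is effectively ``rough''/low-frequency for $\FDds$ even though its Fourier support sits at the edge of $[-\pi,\pi]$. Your block-by-block comparison with $|\nabla|^s$ at frequencies $\ls 1$ does not see these modes at all, and interactions such as $f$ near $\pi$ with $g,h$ near $0$ --- whose output lands near $\pi$, where the left-hand weight $\FDds^{-1/8}$ is large --- are not covered by a ``verbatim'' transfer; nor can one quote the embedding $\|\FDds^{-3/8}f\|_{\ell^8}\ls\|f\|_{\ell^2}$ as a consequence of the Euclidean one, since it genuinely involves the zero of $\sin\theta$ at $\pm\pi$. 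This is precisely the issue the paper's proof is built to handle: it constructs a two-center discrete Littlewood--Paley theory with projections $P_N^{+}$ and $P_N^{-}$ localized near $\theta=0$ and $\theta=\pi$ (mod $2\pi$), establishes Bernstein, Mikhlin, and square-function bounds for that decomposition, proves the $\ell^8$ Sobolev embedding via Hardy--Littlewood--Sobolev and a kernel estimate for $\abs{\sin\theta}^{-3/8}$, and then reruns the case analysis for each of the finitely many assignments of the signs $\pm$ to the four factors. Your proof needs an analogous device treating $\theta=\pm\pi$ on the same footing as $\theta=0$; as written, the discrete conclusion is asserted rather than proved.
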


\begin{proof}
We begin with \eqref{paraproduct-1}, which we prove arguing by duality.  Observe that this estimate is equivalent to
\begin{align}\label{para1}
\bigl\langle|\nabla|^{\frac{1}{8}}f \cdot |\nabla|^{-\frac{3}{8}}g \cdot |\nabla|^{-\frac{3}{8}} h,  |\nabla|^{-\frac{1}{8}} \phi \bigr\rangle \lesssim \|f\|_{L^2}\|g\|_{L^2}\|h\|_{L^2}\|\phi\|_{L^4}.
\end{align}
We begin by decomposing into Littlewood--Paley pieces so that 
\begin{align}
\text{LHS\eqref{para1}}&\lesssim \sum_{N_1\sim N_2} \int \big||\nabla|^{-\frac{1}{8}} \phi_{N_1}\bigr| \big||\nabla|^{\frac{1}{8}} f_{N_2}\bigr|
\big|P_{\leq \tfrac{N_1\vee N_2}{16} }\bigl( |\nabla|^{-\frac{3}{8}}g \cdot |\nabla|^{-\frac{3}{8}} h \bigr)\bigr|\notag\\
&\quad+ \sum_{N_3\sim N_1\vee N_2}  \int \big||\nabla|^{-\frac{1}{8}} \phi_{N_1}\bigr| \big||\nabla|^{\frac{1}{8}} f_{N_2}\bigr| 
\big|P_{N_3}\bigl( |\nabla|^{-\frac{3}{8}}g \cdot |\nabla|^{-\frac{3}{8}} h \bigr)\bigr|,
\label{10:04}
\end{align}
where we use the notation $N_1 \vee N_2 :=\max\{N_1, N_2\}$.

Recall that smooth Littlewood--Paley projections to low frequencies are bounded by the Hardy--Littlewood maximal function, that is,
\begin{align}\label{L to M}
\bigl|P_{\leq N} (\psi) \bigr|\lesssim \mathcal{M} (\psi) \quad\text{uniformly for $N\in 2^\Z$}.
\end{align}
Combining this with Cauchy--Schwarz in $N_1,N_2$,  H\"older, the $L^4$-boundedness of the Hardy--Littlewood maximal function, Littlewood--Paley square function estimates, and Sobolev embedding, we may bound
\begin{align*}
&\sum_{N_1\sim N_2} \int \big||\nabla|^{-\frac{1}{8}} \phi_{N_1}\bigr| \big||\nabla|^{\frac{1}{8}} f_{N_2}\bigr|
	\big|P_{\leq \tfrac{N_1\vee N_2}{16} }\bigl( |\nabla|^{-\frac{3}{8}}g \cdot |\nabla|^{-\frac{3}{8}} h \bigr)\bigr|\\
&\lesssim \int \Bigl\{ \sum_{N_1} \bigl| N_1^{\frac18} |\nabla|^{-\frac18} \phi_{N_1}\bigr|^2\Bigr\}^{\frac12} \Bigl\{ \sum_{N_2} \bigl| N_2^{\frac18} |\nabla|^{-\frac18} f_{N_2}\bigr|	^2\Bigr\}^{\frac12}\mathcal{M} \bigl( |\nabla|^{-\frac{3}{8}}g \cdot |\nabla|^{-\frac{3}{8}} h \bigr) \\
&\lesssim \Bigl \|\Bigl\{ \sum_{N_1} \bigl| N_1^{\frac18} |\nabla|^{-\frac18} \phi_{N_1}\bigr|^2\Bigr\}^{\frac12} \Bigr\|_{L^4} \Bigl\|\Bigl\{ \sum_{N_2} \bigl| N_2^{\frac18} |\nabla|^{-\frac18} f_{N_2}\bigr|^2\Bigr\}^{\frac12}  \Big\|_{L^2} \Bigr\| \mathcal{M} \bigl( |\nabla|^{-\frac{3}{8}}g \cdot |\nabla|^{-\frac{3}{8}} h \bigr) \Bigr\|_{L^4}\\
&\lesssim \|\phi\|_{L^4}\|f\|_{L^2}\bigl\| |\nabla|^{-\frac{3}{8}}g \cdot |\nabla|^{-\frac{3}{8}} h \bigr\|_{L^4}\\
&\lesssim \|\phi\|_{L^4}\|f\|_{L^2}\bigl\| |\nabla|^{-\frac{3}{8}}g\bigr\|_{L^8}\bigr\| |\nabla|^{-\frac{3}{8}} h \bigr\|_{L^8}\\
&\lesssim  \|\phi\|_{L^4}\|f\|_{L^2}\|g\|_{L^2}\|h\|_{L^2},
\end{align*}
which is acceptable.

For the second sum on the right-hand side of \eqref{10:04}, we distinguish two cases:

\noindent\textbf{Case 1:} $N_1\geq N_2$, which implies $N_1\sim N_3$.  In this case, we further decompose the functions $g, h$ in frequency using Littlewood--Paley projections $P_{M_1}, P_{M_2}$.  By symmetry, we may assume that $M_1\geq M_2$ and so $M_1\gtrsim N_3$.  Using H\"older and Bernstein inequalities, followed by Schur's test, we may bound
\begin{align*}
&\sum_{N_3\sim N_1\geq N_2}  \int \big||\nabla|^{-\frac{1}{8}} \phi_{N_1}\bigr| \big||\nabla|^{\frac{1}{8}} f_{N_2}\bigr|
\big|P_{N_3}\bigl( |\nabla|^{-\frac{3}{8}}g \cdot |\nabla|^{-\frac{3}{8}} h \bigr)\bigr|\\
&\qquad\lesssim \sum_{N_2\leq N_1\lesssim M_1\geq M_2} \bigl\| |\nabla|^{-\frac{1}{8}} \phi_{N_1}\bigr\|_{L^4} \bigl\| |\nabla|^{\frac{1}{8}} f_{N_2}\bigr\|_{L^4} \bigl\| |\nabla|^{-\frac{3}{8}}g_{M_1} \bigr\|_{L^2} \bigl\| |\nabla|^{-\frac{3}{8}}h_{M_2} \bigr\|_{L^\infty}\\
&\qquad\lesssim \sum_{N_2\leq N_1\lesssim M_1\geq M_2} N_1^{-\frac18} \|\phi_{N_1}\|_{L^4} N_2^{\frac38}\|f_{N_2}\|_{L^2} M_1^{-\frac38}\|g_{M_1}\|_{L^2} M_2^{\frac18}\|h_{M_2}\|_{L^2}\\
&\qquad\lesssim \|\phi\|_{L^4}\|h\|_{L^2} \sum_{N_2\lesssim M_1} \bigl(\tfrac{N_2}{M_1}\bigr)^{\frac14}\|f_{N_2}\|_{L^2}\|g_{M_1}\|_{L^2}\\
&\qquad\lesssim \|\phi\|_{L^4}\|f\|_{L^2}\|g\|_{L^2}\|h\|_{L^2},
\end{align*}
which is acceptable.

\noindent\textbf{Case 2:} $N_1\leq N_2$, which implies $N_2\sim N_3$.  Arguing as in Case 1, we may bound
\begin{align*}
&\sum_{N_3\sim N_2\geq N_1}  \int \big||\nabla|^{-\frac{1}{8}} \phi_{N_1}\bigr| \big||\nabla|^{\frac{1}{8}} f_{N_2}\bigr|
\big|P_{N_3}\bigl( |\nabla|^{-\frac{3}{8}}g \cdot |\nabla|^{-\frac{3}{8}} h \bigr)\bigr|\\
&\qquad\lesssim \sum_{N_1\leq N_2\lesssim M_1\geq M_2} \bigl\| |\nabla|^{-\frac{1}{8}} \phi_{N_1}\bigr\|_{L^\infty} \bigl\| |\nabla|^{\frac{1}{8}} f_{N_2}\bigr\|_{L^2} \bigl\| |\nabla|^{-\frac{3}{8}}g_{M_1} \bigr\|_{L^2} \bigl\| |\nabla|^{-\frac{3}{8}}h_{M_2} \bigr\|_{L^\infty}\\
&\qquad\lesssim \sum_{N_1\leq N_2\lesssim M_1\geq M_2} N_1^{\frac18} \|\phi_{N_1}\|_{L^4} N_2^{\frac18}\|f_{N_2}\|_{L^2} M_1^{-\frac38}\|g_{M_1}\|_{L^2} M_2^{\frac18}\|h_{M_2}\|_{L^2}\\
&\qquad\lesssim \|\phi\|_{L^4}\|h\|_{L^2} \sum_{N_2\lesssim M_1} \bigl(\tfrac{N_2}{M_1}\bigr)^{\frac14}\|f_{N_2}\|_{L^2} \|g_{M_1}\|_{L^2}\\
&\qquad\lesssim \|\phi\|_{L^4}\|f\|_{L^2}\|g\|_{L^2}\|h\|_{L^2},
\end{align*}
which is also acceptable.  This completes the proof of \eqref{para1}, from which \eqref{paraproduct-1} follows by duality.

The proof of \eqref{paraproduct-2} follows a parallel path.  The key ingredient here is the development of a suitable Littlewood--Paley theory that allows for the implementation of the argument used above in the discrete setting. To this end, we choose a smooth even function $\varphi\in C^\infty(\R)$ supported in $(-\frac{3\pi}4, \frac{3\pi}4)$ such that
$$
\sum_{n\in \Z} \varphi(\theta +\pi n)=1 \qquad \text{for all} \qquad \theta\in \R.
$$
We then define
$$
\varphi^+(\theta) := \sum_{n\in \Z} \varphi (\theta+2\pi n) \qquad\text{and} \qquad \varphi^-(\theta) :=\varphi^+(\theta-\pi).
$$ 

For a dyadic number $N\leq 1$, we define the Littlewood--Paley projections to low frequencies localized near $\theta = 0 $  (mod $2\pi$) and $\theta= \pi$ (mod $2\pi$) via
$$
\widehat {P_{\leq N}^{+} f} (\theta):= \varphi^{+}(\tfrac{\theta}N\bigr)\widehat f(\theta) \qquad \text{and} \qquad \widehat {P_{\leq N}^{-} f} (\theta):= \varphi^{-}(\tfrac{\theta}N\bigr)  \widehat f(\theta),
$$
respectively.  Defining $P_N^{\pm} := P_{\leq N}^{\pm} - P_{\leq \frac N2}^{\pm}$, we have the decomposition
\begin{align}\label{LP decomp}
f = \sum_{\sigma=\pm } \sum_{N\leq 1} P_N^\sigma f.
\end{align}

The very first step in the proof of \eqref{paraproduct-1} was to decompose all functions into their Littlewood--Paley pieces and observe that the only summands that contribute to the left-hand side of \eqref{para1} are those where the two highest frequencies are comparable.   This fact carries over when using the Littlewood--Paley decomposition \eqref{LP decomp} for any particular assignment of sign parameters $\sigma$ to the four functions.  Note that there are only finitely many choices of such sign parameters.

It remains to review discrete analogues of the basic functional estimates used previously.   Because of the uniform smoothness built into these Littlewood--Paley projections, Bernstein and Mikhlin multiplier estimates follow from the standard arguments. The Mikhlin multiplier estimates then guarantee boundedness of the Littlewood--Paley square function corresponding to the decomposition \eqref{LP decomp}.  Lastly, the Sobolev embedding inequality
$$
\| \FDds^{-\frac38} f\|_{\ell^8}\lesssim \|f\|_{\ell^2}
$$
follows from the Hardy--Littlewood--Sobolev inequality and the observation
\begin{equation*}
\Bigl|\int_{-\pi}^\pi |\sin(\theta)|^{-\frac38} e^{im\theta}\,\tfrac{d\theta}{2\pi}\Bigr|\lesssim \langle m\rangle^{-\frac58} \qquad\text{uniformly for $m\in \Z$}.
\qedhere
\end{equation*}
\end{proof}

\section{Uniform Boundedness in \texorpdfstring{$L_t^\infty H_x^1$}{}} \label{Sec:Boundedness}

As a completely integrable system, \eqref{mAL} enjoys infinitely many conservation laws. However, we will only employ the following two:
\begin{align}
\textbf{mass} \qquad   &M(\ah):=-\sum_{n\in\Z}\ln\big(1-\ah_n\bh_n\big),\label{mass}\\
\textbf{energy} \qquad     &E(\ah):=-\sum_{n\in\Z}\Big\{\ah_n\bh_{n+1}+\ah_{n+1}\bh_n+2\ln\big(1-\ah_n\bh_n\big)\Big\}\label{energy}.
\end{align}

\begin{proposition}[GWP for \eqref{mAL}] \label{Prop:l^2-bdd}
Given $0<h\leq h_0:=\min\big\{1,\frac{1}{100\|\phi_0\|_{2}^2}\big\}$, the equation \eqref{mAL} with initial data \eqref{E:initial data} admits a unique global solution in $C_t\ell_n^2$. Moreover,
\begin{equation}\label{E:mass bound}
\bigl| M(\alpha(t))\bigr|  \simeq \nm{\ah(t)}_{\l_n^2}^2\lesssim h  \|\phi_0\|_{L^2}^2  
\qquad\text{uniformly for $t\in \R$}.
\end{equation}
\end{proposition}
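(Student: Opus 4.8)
The plan is to produce the global solution by combining an elementary $\ell^2_n$ local well-posedness argument with the conservation of mass \eqref{mass}. The one genuinely delicate point, specific to the defocusing case, is that $M$ is only finite while $\sup_n|\alpha_n(t)|<1$, so a short continuity argument is needed to guarantee the flow never leaves this region; the smallness forced by $h\le h_0$ is exactly what makes this close.

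First I would regard \eqref{mAL} as an ODE for the single sequence $\alpha=(\alpha_n)_n\in\ell^2_n$, with $\beta_n=\pm\overline{\alpha_n}$ substituted in (so there is no constraint to propagate). Writing the vector field as $F(\alpha)_n=-(\alpha_{n+1}-\alpha_{n-1})\pm|\alpha_n|^2(\alpha_{n+1}-\alpha_{n-1})$, the first term is a bounded linear operator on $\ell^2_n$ (difference of shifts, of norm $\leq 2$), and the cubic term is, via $\|\,\cdot\,\|_{\ell^\infty_n}\le\|\,\cdot\,\|_{\ell^2_n}$, Lipschitz on bounded subsets of $\ell^2_n$. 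Since $\alpha(0)\in\ell^2_n$ by \eqref{alpha0 mass}, the Picard--Lindel\"of theorem yields a unique maximal solution $\alpha\in C^1_t\ell^2_n$ on some interval $(T_-,T_+)$, together with the blow-up alternative: if $T_+<\infty$ then $\|\alpha(t)\|_{\ell^2_n}\to\infty$ as $t\uparrow T_+$ (and similarly at $T_-$).

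Next I would verify that $M(\alpha(t))$ is constant. Since $1-\alpha_n\beta_n=1\mp|\alpha_n|^2$ is real, the factor $(1-\alpha_n\beta_n)$ cancels and one obtains $\frac{d}{dt}\bigl[-\ln(1-\alpha_n\beta_n)\bigr]=(\alpha_{n+1}-\alpha_{n-1})\overline{\alpha_n}+\alpha_n(\overline{\alpha_{n+1}}-\overline{\alpha_{n-1}})$; these terms form an $\ell^1_n$ sequence with norm bounded on compact $t$-intervals, so term-by-term differentiation of the series \eqref{mass} is legitimate, and two index shifts give $\sum_n(\alpha_{n+1}-\alpha_{n-1})\overline{\alpha_n}=S-\overline S$ with $S:=\sum_n\alpha_{n+1}\overline{\alpha_n}$, whence $\frac{d}{dt}M(\alpha(t))=2\,\Re(S-\overline S)=0$. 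This is valid on all of $(T_-,T_+)$ in the focusing case, and on any subinterval where $\|\alpha(t)\|_{\ell^\infty_n}<1$ in the defocusing case.

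Finally I would globalize and read off \eqref{E:mass bound}. By \eqref{alpha0 mass} and $h\le h_0$ we have $\|\alpha(0)\|_{\ell^2_n}^2\le h\|\phi_0\|_{L^2}^2\le\tfrac1{100}$. In the \emph{focusing} case $M$ is always finite, with $\tfrac{m}{1+m}\le|M(\alpha)|\le m$ for $m:=\|\alpha\|_{\ell^2_n}^2$; conservation then gives $\tfrac{m(t)}{1+m(t)}\le|M(\alpha(0))|\le\tfrac1{100}$, hence $m(t)\le\tfrac1{99}$ throughout the existence interval, ruling out blow-up. In the \emph{defocusing} case, convexity of $x\mapsto-\ln(1-x)$ yields $x\le-\ln(1-x)\le 2x$ on $[0,\tfrac1{100}]$, so $M(\alpha(0))\le\tfrac1{50}$; setting $T_*:=\sup\{t<T_+:\|\alpha(s)\|_{\ell^\infty_n}^2\le\tfrac14\text{ for all }s\in[0,t]\}$, on $[0,T_*)$ the mass is finite and conserved, and $-\ln(1-x)\ge x$ gives $\|\alpha(t)\|_{\ell^2_n}^2\le M(\alpha(t))=M(\alpha(0))<\tfrac14$, so $\|\alpha(t)\|_{\ell^\infty_n}^2<\tfrac14$ strictly; continuity of $t\mapsto\|\alpha(t)\|_{\ell^\infty_n}$ then forces $T_*=T_+$, and the uniform $\ell^2_n$ bound forces $T_+=\infty$ (and $T_-=-\infty$ by time reversal). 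In either case $\|\alpha(t)\|_{\ell^2_n}^2\lesssim h\|\phi_0\|_{L^2}^2$ uniformly in $t$, and since $\|\alpha(t)\|_{\ell^\infty_n}^2$ is then small, the two-sided bounds on $-\ln(1-x)$ (resp. $\ln(1+x)$) near $0$ upgrade this to $|M(\alpha(t))|\simeq\|\alpha(t)\|_{\ell^2_n}^2$, which is \eqref{E:mass bound}. The main obstacle is precisely this last step in the defocusing case: all the analysis is routine, but one has to run the continuity/bootstrap argument carefully so the flow stays inside $\{\|\alpha\|_{\ell^\infty_n}<1\}$, and it is the smallness $h\le h_0$ that permits this.
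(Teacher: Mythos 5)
Your proposal is correct and follows essentially the same route as the paper: Picard local well-posedness in $\ell^2_n$, conservation of the mass \eqref{mass}, comparability $|M(\alpha)|\simeq\|\alpha\|_{\ell^2_n}^2$ in the small-data regime forced by $h\le h_0$ and \eqref{alpha0 mass}, and a continuity/bootstrap argument plus the blow-up alternative to globalize. The only cosmetic differences are that you verify mass conservation explicitly (with a harmless omitted factor of $\mp$ in the pointwise derivative, immaterial since the resulting sum telescopes to zero) and run the bootstrap on the $\ell^\infty_n$ norm, whereas the paper compares $|M|$ with $\|\alpha\|_{\ell^2_n}^2$ via the power series of the logarithm and bootstraps the $\ell^2_n$ norm directly.
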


\begin{proof}
The claims follow from the arguments we used  in \cite[Proposition 3.1]{KOVW1} to derive the analogous result for the \eqref{AL} system.  For completeness, we present the details.

Local well-posedness in $\ell^2$ is guaranteed by Picard's theorem.  To show that these local solutions can be extended globally in time, we will prove an a priori $\ell^2$-bound for solutions.

From the power series expansion of the logarithm, we have
\begin{align}\label{8:52}
\Bigl| \bigl| M(\alpha(t))\bigr|  - \nm{\ah(t)}_{\l_n^2}^2 \Bigr| \leq \sum_{\ell=2}^{\infty}\tfrac{1}{\ell}\nm{\ah(t)}_{\l^2_n}^{2\ell}.
\end{align}
Thus, on any time interval where 
\begin{align}\label{8:54}
\nm{\ah(t)}_{\l^2_n}^2\leq \tfrac1{20},
\end{align}
the series on the right-hand side of \eqref{8:52} converges and satisfies
\begin{align*}
\Bigl| \bigl| M(\alpha(t))\bigr|  - \nm{\ah(t)}_{\l_n^2}^2 \Bigr| \leq \tfrac{1}{20}\nm{\ah(t)}_{\l^2_n}^2
\end{align*}
and so,
\begin{align}\label{8:53}
\tfrac12 \bigl| M(\alpha(t))\bigr| \leq \nm{\ah(t)}_{\l_n^2}^2 \leq 2 \bigl| M(\alpha(t))\bigr|.
\end{align}

The restriction $h\leq h_0$ together with \eqref{alpha0 mass} ensures that 
\begin{align}\label{1:49}
\|\alpha(0)\|_{\ell^2_n}^2 \leq \tfrac1{100}.
\end{align}
Using this, \eqref{8:53}, and the conservation of $M(\alpha(t))$, a simple bootstrap argument shows that the solution is global and that \eqref{8:54} and \eqref{8:53} hold for all $t\in\R$.
\end{proof}

Combining Proposition~\ref{Prop:l^2-bdd} with the conservation of the energy $E(\ah)$, we will show that the family of functions $\ph(t,x)$ defined in \eqref{psi-def} is bounded in $H^1_x(\R)$, uniformly for $t\in\R$ and $0<h\leq h_0$.

\begin{proposition}[Uniform bounds in $H_x^1$] \label{Prop:H^1-bdd}
Fix $0<h\leq h_0:=\min\big\{1,\frac{1}{100\|\phi_0\|_{2}^2}\big\}$ and let $\alpha\in C_t\ell_n^2$ denote the global solution to \eqref{mAL} with initial data \eqref{E:initial data} guaranteed by Proposition~\ref{Prop:l^2-bdd}.  Then
\begin{align} \label{ah-H^1}
\sup_{t\in\r}\bnm{\!\abs{\theta}\wah(t,\theta)}_{L^2_{\theta}}\ls h^{\frac32}\bigl[ \|\phi_0\|_{H^1}+ \|\phi_0\|_{L^2}^3\bigr],
\end{align}
with the implicit constant independent of $h$.  Moreover,
\begin{align}\label{unif bdd}
\sup_{0<h\leq h_0}\big\|\phi^h\big\|_{L^\infty_t H^1_x(\R\times\R)} \ls  \|\phi_0\|_{H^1}+ \|\phi_0\|_{L^2}^3.
\end{align}
\end{proposition}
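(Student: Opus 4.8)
The plan is to exploit the conserved energy $E(\alpha)$ to control the discrete $\dot H^1$-type quantity $\||\sin\theta|\,\widehat\alpha(t,\theta)\|_{L^2_\theta}$, and then to translate this back into an $H^1_x$-bound for $\phi^h$ via the explicit relation \eqref{phi^h-hat}. First I would rewrite the energy in Fourier variables: the quadratic part $-\sum_n(\alpha_n\beta_{n+1}+\alpha_{n+1}\beta_n)$ equals, up to the sign convention $\beta=\pm\bar\alpha$, a multiple of $\int \cos\theta\,|\widehat\alpha(\theta)|^2\,\tfrac{d\theta}{2\pi}$, while the logarithmic part combines with the mass. Writing $\cos\theta = 1 - 2\sin^2(\theta/2)$ and using the mass to cancel the constant piece, one should obtain an identity of the schematic form
\begin{align*}
E(\alpha(t)) + (\text{const})\,M(\alpha(t)) = 4\int \sin^2(\tfrac\theta2)\,|\widehat\alpha(t,\theta)|^2\,\tfrac{d\theta}{2\pi} + (\text{higher-order terms}),
\end{align*}
where the higher-order terms come from expanding $-2\ln(1-\alpha_n\beta_n)$ beyond second order and are controlled by powers of $\|\alpha(t)\|_{\ell^2_n}$, hence by Proposition~\ref{Prop:l^2-bdd} by powers of $h^{1/2}\|\phi_0\|_{L^2}$ which are small. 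Since $\sin^2(\theta/2)\simeq |\sin\theta|^2$ on $[-\pi,\pi]$ away from $\theta=\pm\pi$ — and in fact $\sin^2(\theta/2)\gtrsim \theta^2$ uniformly on $[-\pi,\pi]$ — the left side dominates $\||\theta|\widehat\alpha(t,\theta)\|_{L^2_\theta}^2$ up to constants, modulo the small higher-order corrections which can be absorbed.

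The second ingredient is to evaluate $E$ and $M$ on the initial data and show the right side is $\lesssim h^3\|\phi_0\|_{H^1}^2 + (\text{lower order in }h)$. Here I would use \eqref{a0hat}, which says $\widehat\alpha(0,\theta) = \widehat{P_{\leq\frac\pi{2h}}\phi_0}(\theta/h)$ on $[-\pi,\pi]$, so that
\begin{align*}
\int \sin^2(\tfrac\theta2)|\widehat\alpha(0,\theta)|^2\,\tfrac{d\theta}{2\pi} = \int_{|\xi|\le\frac\pi{2h}} \sin^2(\tfrac{h\xi}2)\,\babs{\widehat{\phi_0}(\xi)}^2\, h\,\tfrac{d\xi}{2\pi} \lesssim h^3 \int |\xi|^2 |\widehat{\phi_0}(\xi)|^2\,\tfrac{d\xi}{2\pi} \simeq h^3\|\phi_0\|_{\dot H^1}^2,
\end{align*}
using $|\sin(h\xi/2)|\le h|\xi|/2$. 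The mass contributes $h\|\phi_0\|_{L^2}^2$ and the higher-order terms in the energy expansion contribute at most $\|\alpha(0)\|_{\ell^2_n}^{2\ell}\lesssim (h\|\phi_0\|_{L^2}^2)^\ell$ for $\ell\ge 2$; combining, $\||\theta|\widehat\alpha(t,\theta)\|_{L^2_\theta}^2 \lesssim h^3\|\phi_0\|_{H^1}^2 + h^2\|\phi_0\|_{L^2}^4\cdot h\|\phi_0\|_{L^2}^2 + \cdots$, and taking square roots and using $h\le 1$ gives \eqref{ah-H^1} with the cubic term $\|\phi_0\|_{L^2}^3$ accounting for the nonlinear correction.

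Finally, \eqref{unif bdd} follows by unwinding \eqref{phi^h-hat}. Since $\widehat{\phi^h}(t,\xi) = e^{6\ui h^{-2}t\xi}\widehat\alpha(3h^{-3}t,h\xi)\id_{[-\pi,\pi]}(h\xi)$, a change of variables gives
\begin{align*}
\|\phi^h(t)\|_{L^2_x}^2 = \int_{|h\xi|\le\pi} |\widehat\alpha(3h^{-3}t,h\xi)|^2\,\tfrac{d\xi}{2\pi} = h^{-1}\int_{-\pi}^\pi |\widehat\alpha(3h^{-3}t,\theta)|^2\,\tfrac{d\theta}{2\pi} = h^{-1}\|\alpha(3h^{-3}t)\|_{\ell^2_n}^2 \lesssim \|\phi_0\|_{L^2}^2,
\end{align*}
by \eqref{E:mass bound}, and similarly
\begin{align*}
\|\phi^h(t)\|_{\dot H^1_x}^2 = \int_{|h\xi|\le\pi} |\xi|^2|\widehat\alpha(3h^{-3}t,h\xi)|^2\,\tfrac{d\xi}{2\pi} = h^{-3}\int_{-\pi}^\pi |\tfrac\theta h\cdot h|^2 \cdot h^{-2}|\theta|^{-2}\cdots
\end{align*}
— more cleanly, the substitution $\theta=h\xi$ turns $|\xi|^2\,d\xi$ into $h^{-3}|\theta|^2\,d\theta$, so $\|\phi^h(t)\|_{\dot H^1_x}^2 = h^{-3}\||\theta|\widehat\alpha(3h^{-3}t,\theta)\|_{L^2_\theta}^2 \lesssim h^{-3}\cdot h^3[\|\phi_0\|_{H^1}+\|\phi_0\|_{L^2}^3]^2$ by \eqref{ah-H^1}, which is exactly the claimed bound. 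Adding the two pieces yields \eqref{unif bdd}.

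The main obstacle I anticipate is the bookkeeping in the energy expansion: one must carefully verify that, after using the mass to cancel the linear-in-$\ln$ constant, the resulting leading quadratic form is genuinely coercive — i.e. has the right sign and is comparable to $\int\theta^2|\widehat\alpha|^2$ rather than merely bounded by it — and that the cubic-and-higher remainder terms (both from the $\ln$ expansion and from any cross terms) are controlled purely by the small $\ell^2$-norm of $\alpha$ without needing any derivative control, so that the estimate closes. The sign issue is where the defocusing versus focusing distinction could in principle cause trouble, so one should check that in both cases $E + cM$ provides an upper bound for the positive quantity $\int\theta^2|\widehat\alpha|^2$ modulo the small remainder.
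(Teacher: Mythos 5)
Your overall strategy (use the conserved energy to control the discrete $\dot H^1$ quantity, evaluate at $t=0$, and transfer to $\phi^h$ by the substitution $\theta=h\xi$) is the paper's strategy, and your computations of the quadratic part, of the initial-data contribution $\lesssim h^3\|\phi_0\|_{\dot H^1}^2$, and of the final change-of-variables step for \eqref{unif bdd} are all fine. The genuine gap is exactly at the point you flagged as the "main obstacle": the claim that the cubic-and-higher remainder from $-2\ln(1-\alpha_n\beta_n)$ is "controlled purely by the small $\ell^2$-norm of $\alpha$" does not close the estimate. The $k=2$ term in the expansion is $\sum_n|\alpha_n|^4=\|\alpha(t)\|_{\ell^4}^4$, and using only $\ell^2$ control it is bounded by $\|\alpha(t)\|_{\ell^2}^4\lesssim h^2\|\phi_0\|_{L^2}^4$. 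That is a full power of $h$ too large: it would only yield $\||\theta|\widehat\alpha(t)\|_{L^2}^2\lesssim h^2$, i.e.\ $\||\theta|\widehat\alpha\|_{L^2}\lesssim h$ instead of $h^{3/2}$, and then \eqref{unif bdd} would degenerate to an $h^{-1/2}$ bound on $\|\phi^h\|_{\dot H^1}$. Your displayed arithmetic papers over this by writing the $\ell=2$ contribution as $h^2\|\phi_0\|_{L^2}^4\cdot h\|\phi_0\|_{L^2}^2$, but the extra factor $h\|\phi_0\|_{L^2}^2$ has no justification from $\ell^2$ control alone; and note the problem occurs at \emph{every} time $t$, not just at $t=0$, so it cannot be repaired by exploiting extra structure of the initial data.

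What is needed (and what the paper does) is to estimate the quartic term by interpolating against the very derivative norm being bounded: by Hausdorff--Young and a frequency splitting at a scale $A$ chosen optimally,
\begin{align*}
\|\alpha(t)\|_{\ell^4_n}^4\ \lesssim\ \|\widehat\alpha(t)\|_{L^2_\theta}^{3}\,\bigl\||\theta|\widehat\alpha(t,\theta)\bigr\|_{L^2_\theta}\ \lesssim\ \eps\,\bigl\||\theta|\widehat\alpha(t,\theta)\bigr\|_{L^2_\theta}^{2}+\eps^{-1}h^{3}\|\phi_0\|_{L^2}^{6},
\end{align*}
so that the $\eps$-piece can be absorbed into the coercive quadratic part $|E^{[2]}|\simeq\||\theta|\widehat\alpha\|_{L^2}^2$ (at time $t$ into the left-hand side, at time $0$ it is already $O(h^3)$), leaving only $h^3$ errors; the terms with $k\geq3$ are then genuinely harmless, being $\lesssim\|\alpha\|_{\ell^2}^6\lesssim h^3\|\phi_0\|_{L^2}^6$. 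Your sign worry, on the other hand, is not an issue: since $E^{[2]}=\pm\sum_n|\alpha_{n+1}-\alpha_n|^2$, one simply works with $|E^{[2]}|\leq|E|+|E-E^{[2]}|$ and conservation of $E$, so focusing and defocusing are treated identically; no combination $E+cM$ is required because the $2\ln$ term inside $E$ already produces exactly the discrete Dirichlet form at quadratic order.
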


\begin{proof}
A straightforward computation reveals that the  quadratic part of the conserved quantity $E(\ah)$ is given by
\begin{align} \label{E^2}
    E^{[2]}\big(\ah(t)\big) 
    &= \sum_{n\in\Z}\Big\{ 2\ah_n(t)\bh_n(t) -\ah_n(t)\bh_{n+1}(t) -\ah_{n+1}(t)\bh_n(t) \Big\}\no \\
    &= \pm\sum_{n\in\Z} \babs{\ah_{n+1}(t)-\ah_n(t)}^2\no \\
    &= \pm\int_{-\pi}^{\pi} 4\sin^2\!\left(\tfrac{\theta}{2}\right)\babs{\wah(t,\theta)}^2 \tfrac{d\theta}{2\pi}
    \simeq\pm \bnm{\!\abs{\theta}\wah(t,\theta)}_{L^2_{\theta}}^2,
\end{align}
where we used that $\frac{2\sin\left(\frac{\theta}{2}\right)}{\theta}\in\left[\tfrac{2}{\pi},1\right]$ for $\abs{\theta}\leq\pi$.  Moreover, using \eqref{E:mass bound} and our assumption on $h$, the higher-order terms may be bounded as follows:
\begin{align} \label{E-higher}
\Big|E\big(\ah(t)\big) - E^{[2]}\big(\ah(t)\big) \Big| 
&= 2\Big|\sum_{n\in\Z}\sum_{k\geq2}\tfrac{1}{k}\big[\ah_n(t)\bh_n(t)\big]^k \Big| \no\\
&\leq \bnm{\ah(t)}_{\l_n^4}^4 + 2\sum_{k\geq3}\tfrac{1}{k} \bnm{\ah(t)}_{\l_n^2}^{2k} \no\\
&\ls \bnm{\ah(t)}_{\l_n^4}^4+ \bnm{\ah(t)}_{\l_n^2}^6
 \ls \bnm{\ah(t)}_{\l_n^4}^4 + h^3 \|\phi_0\|_{L^2}^6 .
\end{align}

Using the Hausdorff--Young inequality and decomposing into frequencies $\abs{\theta}\leq A$ and $\abs{\theta}>A$ (for some $0<A\leq\pi$ to be determined later), we may bound
\begin{align*}
    \nm{\ah}_{\l_n^4} \leq \nm{\wah}_{L^{\frac{4}{3}}_{\theta}}
    &\leq \nm{\id_{\abs{\theta}\leq A}\,\wah}_{L^{\frac{4}{3}}_{\theta}}+\nm{\id_{\abs{\theta}\geq A}\,\wah}_{L^{\frac{4}{3}}_{\theta}} \\
    &\leq \nm{\wah}_{L^2_{\theta}}\nm{\id_{\abs{\theta}\leq A}}_{L^4_{\theta}} +\bnm{\!\abs{\theta}\wah}_{L^2_{\theta}}\bnm{\id_{\abs{\theta}\geq A}\tfrac{1}{\abs{\theta}}}_{L^4_{\theta}} \\
    &\ls A^{\frac{1}{4}}\nm{\wah}_{L^2_{\theta}} 
    +A^{-\frac{3}{4}}\bnm{\!\abs{\theta}\wah}_{L^2_{\theta}},
\end{align*}
where all $L^p_\theta$ norms are over $[-\pi,\pi]$.  We choose $A=\nm{\wah}_{L^2}^{-1}\bnm{\!\abs{\theta}\wah}_{L^2}$, which satisfies $0<A\leq\pi$, to optimize the bound above. Together with \eqref{E:mass bound}, this gives
\begin{align} \label{ah-l^4}
    \bnm{\ah(t)}_{\l_n^4}^4 \ls \bnm{\wah(t)}_{L^2_{\theta}}^{3} \bnm{\!\abs{\theta}\wah(t,\theta)}_{L^2_{\theta}} \ls \ep \bnm{\!\abs{\theta}\wah(t,\theta)}_{L^2_{\theta}}^2 + \ep^{-1} h^3 \|\phi_0\|_{L^2}^6
\end{align}
uniformly for $0<\ep\leq1$.

Combining \eqref{E^2}, \eqref{E-higher}, and \eqref{ah-l^4} with the conservation of $E(\ah)$ and taking $\ep$ small to defeat the implicit constants, we deduce that
\begin{align*}      
\bnm{\!\abs{\theta}\wah(t,\theta)}_{L^2_{\theta}}^2 \ls
\bnm{\!\abs{\theta}\wah(0,\theta)}_{L^2_{\theta}}^2 + h^3 \|\phi_0\|_{L^2}^6.
\end{align*}
Recalling \eqref{E:initial data} and \eqref{a0hat}, we find
\begin{align*}
    \bnm{\!\abs{\theta}\wah(0,\theta)}_{L^2_{\theta}}^2
    = \!\!\int_{-\pi}^{\pi} \abs{\theta}^2 \babs{\widehat{P_{\leq \frac{\pi}{2h}} \pe_0}(h^{-1}\theta)}^2 \tfrac{\ud\theta}{2\pi}
    = h^3\!\! \int_{-\frac\pi h}^{\frac\pi h} \abs{\xi}^2 \babs{\widehat{\pe_0}(\xi)}^2 \tfrac{\ud\xi}{2\pi}
    \leq h^3 \|\phi_0\|_{\dot{H}^1}^2,
\end{align*}
thus completing the proof of \eqref{ah-H^1}.

We now turn to \eqref{unif bdd}.  From \eqref{phi^h-hat}, we find
\begin{align} \label{phi^h-H^1}
\bnm{\ph(t)}_{H^1_x}^2 = \int_{\R} \big(1+|\xi|^2\big) \babs{\widehat{\ph}(t,\xi)}^2 \,\tfrac{d\xi}{2\pi}
    &= \tfrac1h\!\int_{-\pi}^{\pi}\big(1+h^{-2}\!\abs{\theta}^2\big)\abs{\wah(3h^{-3}t,\theta)}^2\tfrac{d\theta}{2\pi} \no\\
    &= \tfrac1h \nm{\wah(3h^{-3}t)}_{L^2_{\theta}}^2+\tfrac1{h^3}\nm{\abs{\theta}\wah(3h^{-3}t,\theta)}_{L^2_{\theta}}^2.
\end{align}
By Plancherel and Proposition~\ref{Prop:l^2-bdd}, the first term above is bounded by 
\begin{align}
    h^{-1} \nm{\ah(3h^{-3}t)}_{\l^2_n}^2 \ls \|\phi_0\|_{L^2}^2.
\end{align}
Using \eqref{ah-H^1} to bound the second term in \eqref{phi^h-H^1}, we deduce \eqref{unif bdd}.
\end{proof}

Combining Proposition~\ref{Prop:H^1-bdd} with the embedding $H^1(\R)\hookrightarrow L^\infty(\R)$ and \eqref{aa 01}, we obtain  

\begin{corollary}[$L^\infty_x$/$\l^\infty_n$-bound] \label{Cor:L^infty}
We have
\begin{align*}
\big\|\phi^h(t)\big\|_{L^\infty_{x}} \ls \|\phi_0\|_{H^1} +\|\phi_0\|_{L^2}^3
    \qquad\text{and}\qquad
    \bnm{\ah(t)}_{\l_n^\infty}\ls h  \bigl[\|\phi_0\|_{H^1}+\|\phi_0\|_{L^2}^3\bigr],
\end{align*}
with the implicit constants independent of $t$ and $h$.
\end{corollary}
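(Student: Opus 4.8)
The statement to prove is Corollary~\ref{Cor:L^infty}, which asserts the two $L^\infty_x$/$\ell^\infty_n$ bounds.

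\medskip

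The plan is to simply combine the three ingredients named in the sentence preceding the corollary: the uniform $H^1_x$ bound from Proposition~\ref{Prop:H^1-bdd}, the Sobolev embedding $H^1(\R)\hookrightarrow L^\infty(\R)$, and the identity \eqref{aa 01} relating $\alpha_n(t)$ to $\phi^h$. First I would handle the continuum bound. By the one-dimensional Sobolev embedding $\|f\|_{L^\infty_x}\lesssim \|f\|_{H^1_x}$ (valid in $\R$), we have
\begin{align*}
\|\phi^h(t)\|_{L^\infty_x}\lesssim \|\phi^h(t)\|_{H^1_x}\leq \|\phi^h\|_{L^\infty_t H^1_x(\R\times\R)}\lesssim \|\phi_0\|_{H^1}+\|\phi_0\|_{L^2}^3,
\end{align*}
where the last inequality is exactly \eqref{unif bdd}. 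Since the implicit constants in the Sobolev embedding and in \eqref{unif bdd} are independent of $t$ and $h$, this proves the first claimed estimate.

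\medskip

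For the discrete bound, I would invoke the reconstruction identity \eqref{aa 01}, which states $\alpha_n(t)=h\,\phi^h(\tfrac{h^3}{3}t,\,hn-2ht)$. Taking absolute values and passing to the supremum over $n\in\Z$ gives $\|\alpha(t)\|_{\ell^\infty_n}\leq h\,\|\phi^h(\tfrac{h^3}{3}t)\|_{L^\infty_x}$, and the continuum bound just established (applied at the time $\tfrac{h^3}{3}t$, with the sup over time already absorbed) yields $\|\alpha(t)\|_{\ell^\infty_n}\lesssim h\bigl[\|\phi_0\|_{H^1}+\|\phi_0\|_{L^2}^3\bigr]$, again uniformly in $t$ and $h$. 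This completes the proof. There is no real obstacle here — the only point requiring the slightest care is keeping track of the time-dilation factor $\tfrac{h^3}{3}$ and the spatial shift $2ht$ in \eqref{aa 01}, but since we are taking suprema over all $t$ and all $x$ these reparametrizations are harmless, and the factor of $h$ in front of \eqref{aa 01} is precisely what produces the extra $h$ in the $\ell^\infty_n$ estimate.
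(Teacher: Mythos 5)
Your proposal is correct and matches the paper's argument exactly: the paper also obtains the continuum bound from Proposition~\ref{Prop:H^1-bdd} via the embedding $H^1(\R)\hookrightarrow L^\infty(\R)$, and then deduces the $\ell^\infty_n$ bound from the identity \eqref{aa 01}, which supplies the extra factor of $h$. Nothing further is needed.
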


\section{Precompactness in \texorpdfstring{$C_tL_x^2$}{}}\label{S:4}

This section is dedicated to the proof of the following precompactness result:

\begin{proposition}[Precompactness in $C_tL_x^2$]\label{thm:pre-compactness}
For $T>0$ fixed,
    the family of functions $\big\{\ph:[-T,T]\times\R\to \C \, \big| \,  0<h\leq h_0\big\}$ is precompact in $C([-T,T];L^2_x(\R))$.
\end{proposition}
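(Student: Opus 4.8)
\emph{Overall strategy.} I would verify the hypotheses of the Arzel\`a--Ascoli theorem in $C([-T,T];L^2_x(\r))$: it suffices to show that $t\mapsto\ph(t)$ is equicontinuous into $L^2_x$, uniformly in $0<h\le h_0$, and that for each fixed $t$ the set $\{\ph(t):0<h\le h_0\}$ is precompact in $L^2_x(\r)$. The latter I would establish by the Fr\'echet--Kolmogorov criterion. Proposition~\ref{Prop:H^1-bdd} already supplies a uniform bound in $H^1_x$, which gives both the boundedness in $L^2_x$ and, since $\|f(\cdot+y)-f\|_{L^2_x}\ls|y|\,\|f\|_{\dot H^1_x}$, the uniform equicontinuity under spatial translations. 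Thus the only missing input for pointwise precompactness is \emph{uniform tightness}, i.e.
\begin{align*}
\lim_{R\to\infty}\ \sup_{|t|\le T}\ \sup_{0<h\le h_0}\ \|\ph(t)\|_{L^2(|x|\ge R)}=0.
\end{align*}

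\emph{Equicontinuity in time.} Differentiating \eqref{phi^h-hat} and using \eqref{mAL}, one checks that $\partial_t\ph=L_h\ph+\mathcal N^h$, where $L_h$ is the skew-adjoint Fourier multiplier with symbol $\mathrm i\,q_h(\xi)$, $q_h(\xi):=6h^{-3}\bigl(h\xi-\sin h\xi\bigr)$ (so $q_h(\xi)\to\xi^3$ as $h\to0$, uniformly on compacta, and $|q_h(\xi)|\le|\xi|^3$), while $\mathcal N^h(t)=3h^{-3}(\tt_t\circ\rr)\bigl[\ah_n\bh_n(\ah_{n+1}-\ah_{n-1})\bigr]$ evaluated at discrete time $3h^{-3}t$. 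Since $\langle\xi\rangle^{-2}|q_h(\xi)|\ls\langle\xi\rangle$, the linear part is bounded in $H^{-2}_x$ by $\|\ph(t)\|_{H^1_x}$. For the nonlinear part, Lemma~\ref{L:R-opt}, Corollary~\ref{Cor:L^infty} and \eqref{ah-H^1} give
\begin{align*}
\|\mathcal N^h(t)\|_{L^2_x}=3h^{-7/2}\bigl\|\ah_n\bh_n(\ah_{n+1}-\ah_{n-1})\bigr\|_{\ell^2_n}\ls h^{-7/2}\|\ah\|_{\ell^\infty_n}^2\,\bigl\|\ah_{n+1}-\ah_{n-1}\bigr\|_{\ell^2_n}\ls h^{-7/2}\cdot h^2\cdot h^{3/2},
\end{align*}
which is bounded uniformly in $h$ and $t$ (by a constant depending on $\|\phi_0\|_{H^1}$); note that the factor $h^{-3}$ in $\mathcal N^h$ is exactly absorbed. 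Hence $\|\partial_t\ph\|_{L^\infty_tH^{-2}_x}\ls1$, and interpolating against the uniform $H^1_x$-bound yields $\|\ph(t)-\ph(s)\|_{L^2_x}\ls|t-s|^{1/3}$ uniformly in $h$. (The same interpolation produces the $H^s_x$-estimate, $s<1$, that Theorem~\ref{thm:main} will need.)

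\emph{Uniform tightness.} This is the heart of the matter, and I would approach it through the Duhamel formula $\ph(t)=e^{tL_h}\ph(0)+\int_0^t e^{(t-s)L_h}\mathcal N^h(s)\,\ud s$. The homogeneous term is precompact in $L^2_x$, uniformly in $(t,h)$: its data $\ph(0)=P_{\le\frac\pi{2h}}\phi_0$ converges to $\phi_0$ in $L^2_x$, while $e^{tL_h}\to e^{-t\partial_x^3}$ in the strong operator topology uniformly for $|t|\le T$ (dominated convergence on the Fourier side, using $q_h\to\xi^3$), and a compact set is carried to a compact set. For the inhomogeneous term I would first discard high frequencies: $\bigl\|P_{>\Xi}\int_0^t e^{(t-s)L_h}\mathcal N^h(s)\,\ud s\bigr\|_{L^2_x}\le\Xi^{-1}\bigl\|\ph(t)-e^{tL_h}\ph(0)\bigr\|_{\dot H^1_x}\ls\Xi^{-1}$ by \eqref{unif bdd}. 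On frequencies $|\xi|\le\Xi$ the propagator $e^{\sigma L_h}$ has group velocity $|q_h'(\xi)|\ls\Xi^2$, so (nonstationary phase) its kernel is concentrated in $|x|\ls\Xi^2T$ with rapidly decaying tails; splitting the source at $|x|=R/2$, this bounds $\bigl\|P_{\le\Xi}\int_0^t e^{(t-s)L_h}\mathcal N^h(s)\,\ud s\bigr\|_{L^2(|x|\ge R)}$ by $O_N(R^{-N})+\int_0^{|t|}\|\mathcal N^h(s)\|_{L^2(|x|\ge R/2)}\,\ud s$ once $R\gg\Xi^2T$. Finally, Lemmas~\ref{lem:reconstruction2} and~\ref{L:R-opt}, together with the identity $\ah_n(3h^{-3}s)=h\ph(s,hn-6h^{-2}s)$ from \eqref{aa 01}, show that the tail $\|\mathcal N^h(s)\|_{L^2(|x|\ge R/2)}$ is controlled, up to an error vanishing as $R\to\infty$, by $\|\ah\|_{\ell^\infty_n}^2$ times the tail of $\ah_{n+1}-\ah_{n-1}$ in $\ell^2_n$ — equivalently, by the tightness of $\ph(s)$ in $\dot H^1_x$. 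So the whole estimate closes \emph{provided} $\{\ph(t)\}$ is tight in $\dot H^1_x$ uniformly in $(t,h)$; here Lemma~\ref{L:locA} (and its $\dot H^1$ analogue, obtained by the same Poisson-summation/maximal-function argument applied to $\phi_0'$) furnish the base case at $t=0$.

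\emph{The main obstacle} is therefore this uniform $\dot H^1_x$-tightness, i.e.\ showing that the energy \eqref{energy} of \eqref{mAL} — whose quadratic part is $E^{[2]}(\ah)=\pm\sum_n|\ah_{n+1}-\ah_n|^2$ — stays concentrated near the site $n\approx 6h^{-3}t$ that the co-moving frame tracks. I would prove this by a bootstrap over short time intervals of length $\sim h^3$ (the nonlinear timescale), localizing the conserved mass \eqref{mass} and energy \eqref{energy} against a smooth cutoff that translates at the group velocity $2h$: the leading part of each flux, which transports the corresponding density at velocity $2h$, is cancelled by the motion of the cutoff, and the surviving flux is of subleading order — a discrete analogue of the local-smoothing effect for the Airy equation. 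The delicate point, and the crux of the whole proposition, is that a naive bound on this reduced flux is \emph{not} small enough to survive multiplication by the large time $3h^{-3}T$; one must either exploit the precise structure of the \eqref{mAL} conservation laws to extract the extra powers of $h$ (which must be counted carefully, so that the $h^{-3}$ occurring in every passage between $\ah_n$ and $\ph$ is compensated exactly), or run this step instead through the Strichartz estimates of Lemma~\ref{P:Strichartz}. Once the $L^2_x$- and $\dot H^1_x$-tightness are both in hand, Arzel\`a--Ascoli delivers precompactness of $\{\ph\}$ in $C([-T,T];L^2_x(\r))$.
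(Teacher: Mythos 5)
Your framework (Arzel\`a--Ascoli plus the Riesz--Kolmogorov criterion) is the one the paper uses, and your treatment of uniform boundedness and of equicontinuity in space and time is sound: the $H^{-2}_x$ bound on $\partial_t\ph$ followed by interpolation against the uniform $H^1_x$ bound is a clean repackaging of the paper's Fourier-side Duhamel computation, and your power counting for $\mathcal N^h$ ($h^{-7/2}\cdot h^2\cdot h^{3/2}=O(1)$) is correct.

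The genuine gap is in tightness. You route it through Duhamel plus finite speed of propagation for the low-frequency part, which forces you to control the tails of the \emph{source} $\mathcal N^h(s)$, and hence — as you yourself observe — requires uniform tightness of $\ph(t)$ in $\dot H^1_x$. That is a strictly stronger statement than what is needed, you do not prove it, and the sketch you offer for it would not close: localizing the quadratic energy $\sum_n|\ah_{n+1}-\ah_n|^2$ against a moving cutoff produces, after the transport cancellation, a surviving flux involving \emph{second} differences of $\ah$, whose smallness would require a discrete $H^2$-type bound ($\|\ah_{n+2}-2\ah_{n+1}+\ah_n\|_{\ell^2_n}^2\ls h^5$) that is simply not available for $H^1$ initial data. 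The paper avoids all of this by localizing only the \emph{mass}: with $w(\tau,n)=1-\chi\bigl(\tfrac{2(n-2\tau)h}{L}\bigr)$ one computes $\tfrac{d}{d\tau}\sum_n w|\ah_n|^2$, Taylor-expands $w(n+1)-w(n-1)$ against $\partial_\tau w$ to kill the transport term to third order, and finds that the surviving flux is exactly $-\sum_n[w(n+1)-w(n)]\,|\ah_{n+1}-\ah_n|^2$, which is bounded by $\tfrac{h}{L}\,\|\ah_{n+1}-\ah_n\|_{\ell^2_n}^2\ls \tfrac{h^4}{L}$ using only the \emph{global} discrete $\dot H^1$ bound \eqref{ah-H^1} from energy conservation (the cubic terms are even smaller). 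This $h^4/L$ is precisely small enough to survive integration over $|\tau|\leq 3h^{-3}T$, yielding $O(T(\tfrac1L+h)h)$, after which Lemma~\ref{L:locA} handles the initial-data term. So the ``extra powers of $h$'' you worry about are already there in the mass localization — your concern is well placed for the energy, which is why one should not try to localize it.
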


By the Arzel\`a--Ascoli theorem and its $L^2$ analogue, due to M. Riesz \cite{Riesz1933}, precompactness of the family is equivalent to the following three properties:

\noindent\emph{Uniform boundedness}: There exists $C>0$ such that
\begin{align}\label{unif bdd=}
\sup_{0<h\leq h_0}\bnm{\ph}_{L^\infty_t L^2_x([-T,T]\times\R)}\leq C.
\end{align}
\noindent\emph{Equicontinuity}: For any $\ep>0$, there exists $\delta>0$ so that whenever $\abs{s}+\abs{y}<\delta$,
\begin{align} \label{equi}
\nm{\ph(t+s,x+y)-\ph(t,x)}_{L^2_x}< \ep
\end{align}
uniformly for $t\in[-T,T]$ (with $t+s\in [-T,T]$) and $0<h\leq h_0$.\\[2mm]
\noindent\emph{Tightness}: For any $\ep>0$, there exists $R>0$ such that
\begin{align} \label{tightness}
\sup_{0<h\leq h_0}  \sup_{|t|\leq T}\int_{\abs{x}\geq R}\babs{\ph(t,x)}^2\, dx<\ep.
\end{align}

The uniform boundedness property \eqref{unif bdd=} is guaranteed by Proposition~\ref{Prop:H^1-bdd}.  Next, we will demonstrate the equicontinuity statement \eqref{equi} by treating separately the space and time variables. Finally, we will demonstrate the tightness property in subsection~4.3.

\subsection{Equicontinuity in Space}
By Plancherel,
\begin{align*} 
\bigl\|\phi^h(t,x+y)-\phi^h(t,x)\bigr\|_{L^2_x}^2
&= \int_{\R} \babs{\ue^{\ui y\xi}-1}^2\babs{\wph(t,\xi)}^2 \tfrac{\ud\xi}{2\pi} \\
&= \int_{\R} 4\sin^2\!\big(\tfrac{y\xi}{2}\big) \babs{\wph(t,\xi)}^2 \tfrac{\ud\xi}{2\pi}.
\end{align*}
For $\kappa\geq 1$ to be chosen shortly, we may bound
\begin{align*}
\int_{\R} 4\sin^2\!\big(\tfrac{y\xi}{2}\big) \babs{\wph(t,\xi)}^2 \tfrac{\ud\xi}{2\pi}
&\lesssim \int_{|\xi|\leq \kappa} |y|^2|\xi|^2\babs{\wph(t,\xi)}^2 \, d\xi +  \kappa^{-2} \int_{|\xi|>\kappa} \langle \xi\rangle^2 \babs{\wph(t,\xi)}^2 \, d\xi\\
&\lesssim |y|^2\kappa^2 \| \ph(t)\|_{L^2_x}^2+ \kappa^{-2} \| \ph(t)\|_{H^1_x}^2.
\end{align*}
Using \eqref{unif bdd}, for any $\ep>0$ we may first choose $\kappa$ large and then $\delta$ small so that $|y|<\delta$ guarantees that
$$
\bigl\|\phi^h(t,x+y)-\phi^h(t,x)\bigr\|_{L^2_x}<\ep,
$$
uniformly for $t\in[-T,T]$ and $0<h\leq h_0$.

\subsection{Equicontinuity in Time}
By Plancherel and \eqref{phi^h-hat},
\begin{align*}
\bigl\|\phi^h(t+s,x)-\phi^h(t,x)\bigr\|_{L^2_x}
= h^{-\frac12} \nm{\ue^{6\ui h^{-3} s\theta }\,\wah\big(3h^{-3}(t+s),\theta\big)-\wah\big(3h^{-3}t,\theta\big)}_{L^2_\theta}.
\end{align*}

By working in Fourier variables, the Duhamel formula associated with \eqref{mAL} yields
\begin{align*}
\wah\big(3h^{-3}(t+s),\theta\big) &= \ue^{-6\ui h^{-3}s\sin(\theta)}\wah\big(3h^{-3}t,\theta\big)\\
&\quad +\int_{3h^{-3}t}^{3h^{-3}(t+s)}\ue^{-2\ui \sin(\theta)[3h^{-3}(t+s)-\tau]}\widehat{F}(\tau,\theta)\,\ud\tau,
\end{align*}
where $F_n:= \ah_n\bh_n\big(\ah_{n+1}-\ah_{n-1}\big)$.  Using Lemma~\ref{L:sums to int}, we get
\begin{align} \label{temp-1}
h^{\frac12}\bigl\|\phi^h(t+s,x)-\phi^h(t,x)\bigr\|_{L^2_x}
& \leq \nm{\big[\ue^{6\ui h^{-3} s(\theta-\sin\theta) }-1\big] \wah\big(3h^{-3}t,\theta\big) }_{L^2_\theta} \no\\
&\quad+ \Bnm{\int_{3h^{-3}t}^{3h^{-3}(t+s)}\ue^{-2\ui \sin(\theta)[3h^{-3}(t+s)-\tau]}\widehat{F}(\tau,\theta)\,\ud\tau }_{L^2_\theta}. 
\end{align}

We first consider the contribution of the linear term.  For $\kappa\geq 1$ to be chosen shortly, we evaluate the contributions of the regions $|\theta|< \kappa h$ and $\kappa h\leq|\theta|\leq\pi$ separately, using Proposition~\ref{Prop:l^2-bdd} and \eqref{ah-H^1}:
\begin{align*}
\nm{\big[\ue^{6\ui h^{-3} s(\theta-\sin\theta) }-1\big] \wah\big(3h^{-3}t,\theta\big) }_{L^2_\theta}
&\ls \kappa^3|s| \bigl\|\wah(3h^{-3}t)\bigr\|_{L_\theta^2}+ \tfrac1{kh}\bigl\||\theta| \wah(3h^{-3}t,\theta)\bigr\|_{L_\theta^2} \\
&\ls  \kappa^3|s| h^{\frac12}\|\phi_0\|_{L^2}+ \tfrac{h^{\frac12}}\kappa\bigl[\|\phi_0\|_{H^1} + \|\phi_0\|_{L^2}^3 \bigr].
\end{align*}
Thus,  for any $\ep>0$ we may first choose $\kappa$ large and then $\delta$ small so that $|s|<\delta$ guarantees that 
\begin{align} \label{temp-lin}
\nm{\big[\ue^{6\ui h^{-3} s(\theta-\sin\theta) }-1\big] \wah\big(3h^{-3}t,\theta\big) }_{L^2_\theta}<\tfrac\ep2 h^{\frac12},
\end{align}
uniformly for $t\in[-T,T]$ and $0<h\leq h_0$.

To estimate the contribution of the nonlinearity, we use Plancherel, the Minkowski and H\"older inequalities, followed by \eqref{ah-H^1} and Corollary~\ref{Cor:L^infty}:
\begin{align*}
\Bnm{\int_{3h^{-3}t}^{3h^{-3}(t+s)}\ue^{-2\ui \sin(\theta)[3h^{-3}(t+s)-\tau]}&\widehat{F}(\tau,\theta)\,\ud\tau }_{L^2_\theta} \\
&\ls h^{-3}|s| \sup_{\tau}\bnm{F(\tau)}_{\ell_n^2} \\
&\ls h^{-3}|s| \sup_{\tau}\bigl[\bnm{\ah(\tau)}_{\ell_n^\infty}^2 \bnm{\sin(\theta)\wah(\tau,\theta)}_{L^2_\theta}\bigr] \\
&\ls h^{-3}|s| h^{\frac72}\bigl[ \|\phi_0\|_{H^1} + \|\phi_0\|_{L^2}^3\bigr]^3\\
&\leq \tfrac\ep2 h^{\frac 12} 
\end{align*}
for $|s|<\delta=\delta(\ep)$ sufficiently small.  Combining this with \eqref{temp-1} and \eqref{temp-lin} yields
$$
\nm{\ph(t+s)-\ph(t)}_{L^2_x}< \ep \quad\text{whenever} \quad |s|<\delta,
$$
uniformly for $t\in[-T,T]$ (with $t+s\in [-T,T]$) and $0<h\leq h_0$.

\subsection{Tightness}
In view of the well-posedness of \eqref{mAL} discussed in Proposition~\ref{Prop:l^2-bdd}, the map $h\mapsto\ph$ is continuous from $(0,h_0]$ to the space $C([-T,T];L^2_x(\R))$.  Thus,  property \eqref{tightness} automatically holds on any compact interval $[h_1,h_0]$, with $h_1>0$.  Correspondingly, it suffices to prove \eqref{tightness} only for very small $h\in(0,h_1]$ where $h_1$ may depend on $\ep$.

For $L\geq 1$ to be chosen later and $R\geq 2L$, we may use \eqref{psi-def-R}, Lemmas~\ref{L:R-opt} and \ref{lem:reconstruction2}, and Proposition~\ref{Prop:l^2-bdd} to bound
\begin{align} \label{tightness-transfer}
\int_{\abs{x}\geq R}\babs{\ph(t,x)}^2\, dx
&= \int_{\abs{x-6h^{-2}t}\geq R} \babs{[\rr\ah](3h^{-3}t,x)}^2\, dx \no\\
&\ls \Bnm{\rr\big[\ah\id_{\{|n-6h^{-3}t|> h^{-1}L\}}\big](3h^{-3}t)}_{L^2_x}^2 \no\\
&\quad + \int_{\abs{x-6h^{-2}t}\geq R} \Babs{\rr\big[\ah\id_{\{|n-6h^{-3}t|\leq h^{-1}L\}}\big](3h^{-3}t,x)}^2 dx \no\\
&\ls h^{-1}\Bnm{\big[\ah\id_{\{|n-6h^{-3}t|> h^{-1}L\}}\big](3h^{-3}t)}_{\l^2_n}^2+ h^{-1}\tfrac{L}{R-L} \bnm{\ah(3h^{-3}t)}_{\l^2_n}^2 \no\\
&\ls h^{-1}\Bnm{\big[\ah\id_{\{|n-6h^{-3}t|> h^{-1}L\}}\big](3h^{-3}t)}_{\l^2_n}^2 + \tfrac{L}{R-L}\|\phi_0\|_{L^2}^2.
\end{align}

Let $\chi(x)\in C^{\infty}(\R)$ be a smooth bump function satisfying $\chi(x)=1$ for $\abs{x}\leq1$ and $\chi(x)=0$ for $\abs{x}\geq2$.  From this we build a cutoff function to large $n$ on the lattice such that $|n-6h^{-3}t|> \frac{L}{2h}$ via  
\begin{align}
    w=w(h,L;\tau,n) := 1-\chi\big(\tfrac{2(n-2\tau)h}{L}\big)
\end{align}
where we use the shorthand $\tau=3h^{-3}t$.

We will show that for any $0<\ep\leq 1$ there exists $L\geq 1$ and $h_1(\ep)>0$ such that 
\begin{align} \label{tightness 2}
\sup_{|\tau|\leq 3h^{-3}T} \sum_{n\in\Z}w(h,L;\tau,n)\babs{\ah_n(\tau)}^2 <\ep h, 
\end{align}
uniformly for $0<h\leq h_1(\ep)$.  Choosing $R=\frac{2L}\ep$, \eqref{tightness-transfer} then guarantees that
\begin{align*}
\sup_{0<h\leq h_1}  \sup_{|t|\leq T}\int_{\abs{x}\geq R}\babs{\ph(t,x)}^2\, dx\ls\ep.
\end{align*}
Recall that tightness in the regime $h\in[h_1,h_0]$ follows from the compactness of the interval $[h_1,h_0]$ and the continuity of the mapping $h\mapsto\ph$.

It remains to prove \eqref{tightness 2}. Using \eqref{mAL}, we compute
\begin{align} \label{tightness'-dt}
\frac{\ud}{\ud\tau} \sum_{n\in\Z} w(h,L;\tau,n)\babs{\ah_n(\tau)}^2 
&= \sum_{n\in\Z} \left\{w(n)\cdot 2\Re\big[\overline{\ah}_n\partial_{\tau}\ah_n\big] + \partial_{\tau}w(n)\abs{\ah_n}^2 \right\} \no\\
&= 2\Re\sum_{n\in\Z} w(n)\cdot\overline{\ah}_n\Big\{-\big(\ah_{n+1}-\ah_{n-1}\big) + F_n\Big\} \no\\
&\quad + \tfrac{4h}{L} \sum_{n\in\Z}\chi'(x_n)\abs{\ah_n}^2 ,
\end{align}
where $F_n:= \ah_n\bh_n\big(\ah_{n+1}-\ah_{n-1}\big)$ and we use the shorthand $x_n:=\tfrac{2(n-2\tau)h}{L}$.

To exploit cancellations between the quadratic terms above, we rewrite
\begin{align*}
-2\Re\sum_{n\in\Z} w(n)\cdot\overline{\ah}_n\big(\ah_{n+1}-\ah_{n-1}\big)
&= 2\Re\sum_{n\in\Z} \big[w(n+1)-w(n)\big] \overline{\ah}_n\ah_{n+1} \\
&= \sum_{n\in\Z} \big[w(n+1)-w(n-1)\big] \abs{\ah_n}^2 \\
&\quad - \sum_{n\in\Z} \big[w(n+1)-w(n)\big] \babs{\ah_{n+1}-\ah_n}^2.
\end{align*}
Using the Taylor expansion (with Lagrange remainder), we may write
\begin{align*}
w(n+1)-w(n) &= -\tfrac{2h}{L}\chi'(\xi_n),\\
w(n+1)-w(n-1) &= -\tfrac{4h}{L}\chi'(x_n) - \tfrac{1}{6}\big(\tfrac{2h}{L}\big)^3 \big[\chi'''(\theta_n)+\chi'''(\eta_n)\big],
\end{align*}
for some $\xi_n, \theta_n\in(x_n,x_{n+1})$ and $\eta_n\in(x_{n-1},x_{n})$.  Thus, cancelling with the last term in \eqref{tightness'-dt} and using Proposition~\ref{Prop:l^2-bdd}, we may estimate
\begin{align*}
\bigg|\sum_{n\in\Z} \big[w(n+1)-w(n-1)\big] \abs{\ah_n}^2 + \tfrac{4h}{L} \sum_{n\in\Z}\chi'(x_n)\abs{\ah_n}^2\bigg|
\ls \big(\tfrac{h}{L}\big)^3 \nm{\ah}_{\l_n^2}^2
\ls \tfrac{h^4}{L^3}\|\phi_0\|_{L^2}^2.
\end{align*}
Moreover, using Proposition~\ref{Prop:H^1-bdd} we may bound
\begin{align*}
\bigg|\sum_{n\in\Z} \big[w(n+1)-w(n)\big] \babs{\ah_{n+1}-\ah_n}^2 \bigg| 
&\ls \bnm{w(n+1)-w(n)}_{\l_n^\infty} \bnm{\ah_{n+1}-\ah_n}_{\l_n^2}^2 \\
&\ls \tfrac{h}{L} \bnm{\!\abs{\theta}\wah(\tau,\theta)}_{L^2_{\theta}}^2 \\
&\ls \tfrac{h^4}{L}\bigl[ \|\phi_0\|_{H^1}^2 + \|\phi_0\|_{L^2}^6\bigr].
\end{align*}
Thus, we may bound the contribution of the quadratic terms in \eqref{tightness'-dt} by
\begin{align} \label{dt-lin}
\bigg|-2\Re\sum_{n\in\Z} w(n)\cdot\overline{\ah}_n\big(\ah_{n+1}-\ah_{n-1}\big) &+ \tfrac{4h}{L} \sum_{n\in\Z}\chi'(x_n)\abs{\ah_n}^2 \bigg| \no\\
&\ls \tfrac{h^4}{L} \bigl[ \|\phi_0\|_{H^1}^2 + \|\phi_0\|_{L^2}^6\bigr].
\end{align}

Next we turn to the contribution of the nonlinearity in \eqref{tightness'-dt},
which can be rewritten as 
\begin{align*}
2\Re\sum_{n\in\Z} &w(n)\cdot\overline{\ah}_n F_n \\
    &= 2\Re\sum_{n\in\Z} w(n)\cdot \abs{\ah_n}^2\bh_n\big(\ah_{n+1}-\ah_{n-1}\big) \\
    &=\sum_{n\in\Z} \Big[w(n-1)\abs{\ah_{n-1}}^2-w(n+1)\abs{\ah_{n+1}}^2\Big] \ah_n\bh_n\\ 
    &\quad- \sum_{n\in\Z} \Big[w(n)\abs{\ah_{n}}^2-w(n+1)\abs{\ah_{n+1}}^2\Big] \big(\ah_{n+1}-\ah_n\big)\big(\bh_{n+1}-\bh_n\big) \\
    &= \pm\sum_{n\in\Z} \big[w(n)-w(n+1)\big] \abs{\ah_n}^2\abs{\ah_{n+1}}^2 \\
    &\quad \mp \sum_{n\in\Z} \Big[w(n)\abs{\ah_{n}}^2-w(n+1)\abs{\ah_{n+1}}^2\Big] \babs{\ah_{n+1}-\ah_n}^2.
\end{align*}
Using Proposition~\ref{Prop:l^2-bdd} and Corollary~\ref{Cor:L^infty}, we may bound the contribution of the first series above by
\begin{align*}
\bnm{w(n+1)-w(n)}_{\l_n^\infty} \nm{\ah}_{\l_n^\infty}^2 \nm{\ah}_{\l_n^2}^2 \ls \tfrac{h^4}{L} \bigl[\|\phi_0\|_{H^1}^2+ \|\phi_0\|_{L^2}^6\bigr]\|\phi_0\|_{L^2}^2.
\end{align*}
Similarly, using also Proposition~\ref{Prop:H^1-bdd}, the contribution of the second series can be bounded by 
\begin{align*}
\nm{w}_{\l_n^\infty} \nm{\ah}_{\l_n^\infty}^2 \bnm{\ah_{n+1}-\ah_n}_{\l_n^2}^2 \ls h^5 \bigl[\|\phi_0\|_{H^1}^2+ \|\phi_0\|_{L^2}^6\bigr]^2.
\end{align*}
Hence, we may bound the contribution of the nonlinearity by
\begin{align} \label{dt-non}
    \bigg|2\Re\sum_{n\in\Z} w(n)\cdot\overline{\ah}_n F_n\bigg|\ls \bigl(\tfrac{h^4}{L} + h^5\bigr) \bigl[\|\phi_0\|_{H^1}^2+ \|\phi_0\|_{L^2}^6\bigr]^2.
\end{align}

Combining \eqref{tightness'-dt} with \eqref{dt-lin} and \eqref{dt-non}, we find 
\begin{align*}
\bigg|\frac{\ud}{\ud\tau} \sum_{n\in\Z} w(h,L;\tau,n)\babs{\ah_n(\tau)}^2\bigg|\ls \tfrac{h^4}{L} + h^5 \quad\text{uniformly for $\tau\in \R$,}
\end{align*}
with the implicit constant depending only on the $H^1$ norm of $\phi_0$.  Integrating in the time variable, this yields
\begin{align} \label{tightness 2'}
\sup_{|\tau|\leq 3h^{-3}T} \sum_{n\in\Z}w(h,L;\tau,n)\babs{\ah_n(\tau)}^2 &\ls \sum_{n\in\Z} w(h,L;0,n) \babs{\ah_n(0)}^2
+ 3h^{-3}T \big(\tfrac{h^4}{L} + h^5\big) \no \\
&\ls \sum_{n\in\Z} \big[1-\chi\big(\tfrac{2nh}{L}\big)\big] \babs{\ah_n(0)}^2
+ T \big(\tfrac{1}{L} + h\big)h.
\end{align}
Using Lemma~\ref{L:locA}, we find
\begin{align*}
h^{-1} \sum_{n\in\Z} \big[1-\chi\big(\tfrac{2nh}{L}\big)\big] \babs{\ah_n(0)}^2 
    \ls  \bnm{\mathcal{M} \phi_0 }_{L^2_x(|x|\geq L)}.
\end{align*}
By the dominated convergence theorem and the $L^2$-boundedness of the Hardy--Littlewood maximal function, the right-hand side above converges to zero as $L\to \infty$.  Thus, the right-hand side of \eqref{tightness 2'} can be made smaller than $\ep h$ by choosing $L=L(\ep)$ sufficiently large and then $0<h\leq h_1(\ep)$ sufficiently small.
This completes the proof of \eqref{tightness 2} and so that of \eqref{tightness}.

\section{Convergence of the Flows} \label{Sec:Convergence}

It follows from Proposition~\ref{thm:pre-compactness} that every sequence $h_n\rt 0$ admits a subsequence along which $\pe^{h_{n}}$ converges to some $\pe$ in $C([-T,T];L^2_x(\R))$.   In this section, we will show that all such subsequential limits are $L^\infty_tH_x^1$ solutions to the Duhamel formulation of \eqref{mKdV} with initial data $\pe_0$.  Using the uniqueness of such \eqref{mKdV} solutions provided by Theorem~\ref{T:UCU}, we conclude that all subsequential limits agree and so $\phi^h \to \phi$  in $C([-T,T];L^2_x(\R))$ as $h\to 0$.

To better appreciate the connection between \eqref{mKdV} and \eqref{mAL}, we rewrite \eqref{mKdV} as
\begin{align}
    \dt\pe=-\p_x^3\pe\pm6\abs{\pe}^2\p_x\pe
    =: \ui\Lc\pe + F[\pe]
\end{align}
and \eqref{mAL} as 
\begin{align}
\dt\ah_n = -\big(\ah_{n+1}-\ah_{n-1}\big) + \ah_n\bh_n\big(\ah_{n+1}-\ah_{n-1}\big)
=: \ui(\Ld\ah)_n + F_n[\ah], 
\end{align}
where $\Lc, \Ld$ are differential/difference operators with symbols
\begin{align*}
\Lc(\xi)=\xi^3 \quad\text{and}\quad \Ld(\theta)=-2\sin(\theta),
\end{align*}
while the nonlinearities are given by 
\begin{align*}
F[\pe]=\pm6\abs{\pe}^2\p_x\pe \quad\text{and} \quad F_n[\ah]=\ah_n\bh_n\big(\ah_{n+1}-\ah_{n-1}\big).
\end{align*}

\begin{proposition}\label{Prop:convergence}
Let $\phi \in C([-T,T];L^2_x(\R))$ be such that 
\begin{align}\label{convg}
\phi^{h_n} \to\phi \qtq{in} C([-T,T];L^2_x(\R))
\end{align}
along some sequence $h_n\to 0$.  Then $\pe \in L^\infty_t H^1_x([-T,T]\times\R)$ and it solves \eqref{mKdV} with initial data $\pe_0$ in the sense that 
\begin{align}\label{duhamel}
    \pe(t)=\ue^{\ui t\Lc}\pe_0 + \int_0^t\ue^{\ui(t-s)\Lc}F[\pe(s)]\,\ud s
\end{align}
in $C([-T,T];L^2_x(\R))$.
\end{proposition}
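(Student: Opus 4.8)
\emph{Plan.} I would pass to the limit in the Duhamel formula for $\ph$; throughout we work along the fixed sequence $h_n\to 0$ and abbreviate $\ph:=\phi^{h_n}$. First, by Proposition~\ref{Prop:H^1-bdd} the family $\{\ph\}$ is bounded in $L^\infty_tH^1_x([-T,T]\times\r)$. For each fixed $t$, the convergence $\ph(t)\to\phi(t)$ in $L^2_x$ together with this uniform bound forces $\ph(t)\rightharpoonup\phi(t)$ weakly in $H^1_x$, whence $\|\phi(t)\|_{H^1_x}\ls\|\phi_0\|_{H^1}+\|\phi_0\|_{L^2}^3$ uniformly in $t$ by weak lower semicontinuity; thus $\phi\in L^\infty_tH^1_x$. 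Interpolating between \eqref{convg} and this uniform $H^1$ bound upgrades the convergence to $\ph\to\phi$ in $C_tH^s_x([-T,T]\times\r)$ for every $0\le s<1$ (and shows $\phi\in C_tH^s_x$), which is the form of convergence used below.

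\emph{The $\ph$-Duhamel formula and the linear term.} Starting from the Duhamel formula for $\wah(\tau,\theta)$ under the \eqref{mAL} flow and using the change of variables \eqref{phi^h-hat}, one rewrites
\begin{align}\label{plan:duh}
    \ph(t)=\ue^{\ui t\Lc^h}\ph(0)+\int_0^t\ue^{\ui(t-s)\Lc^h}\,\mathcal N_h(s)\,\ud s,
\end{align}
where $\ue^{\ui t\Lc^h}$ is the Fourier multiplier with symbol $\ue^{\,6\ui h^{-3}t[h\xi-\sin(h\xi)]}\,\id_{[-\pi,\pi]}(h\xi)$ and $\mathcal N_h(s):=3h^{-3}(\tt_s\circ\rr)\big[F[\ah](3h^{-3}s)\big]$ with $F[\ah]_n=\ah_n\bh_n(\ah_{n+1}-\ah_{n-1})$. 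Since $6h^{-3}[h\xi-\sin(h\xi)]=\xi^3+O(h^2\xi^5)$ and $\id_{[-\pi,\pi]}(h\xi)\to 1$, the symbol of $\ue^{\ui t\Lc^h}$ tends to $\ue^{\ui t\xi^3}$—the symbol of $\ue^{\ui t\Lc}=\ue^{-t\D}$—pointwise in $\xi$, uniformly for $|t|\le T$, and is bounded by $1$; combined with $\ph(0)=P_{\le\frac\pi{2h}}\phi_0\to\phi_0$ in $H^1_x$ and dominated convergence, this gives $\ue^{\ui t\Lc^h}\ph(0)\to\ue^{\ui t\Lc}\phi_0$ in $C_tL^2_x([-T,T]\times\r)$. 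The discrete Strichartz inequality \eqref{Strichartz-ineq-d} and the paraproduct estimate \eqref{paraproduct-2}, transported through the same change of variables, moreover give an $h$-uniform Strichartz estimate for $\ue^{\ui t\Lc^h}$ and hence a uniform $L^\infty_tL^2_x$ bound on the nonlinear Duhamel term.

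\emph{The nonlinear term: strategy.} This is the heart of the matter, and the main obstacle: because the limiting nonlinearity $F[\phi]=\pm 6|\phi|^2\phi'$ is of derivative type one cannot close the argument by energy estimates but must use the smoothing encoded in the Strichartz inequalities of Lemma~\ref{P:Strichartz}, fed by the paraproduct estimates of Lemma~\ref{L:paraproduct}. The key claim is
\begin{align}\label{plan:NL}
    \bnm{|\nabla|^{-\frac18}\big(\mathcal N_h-F[\phi]\big)}_{L^{8/7}_tL^{4/3}_x([-T,T]\times\r)}\to 0.
\end{align}
Granting \eqref{plan:NL}, note first that by \eqref{plan:duh} and the linear convergence the nonlinear Duhamel term converges, namely $\int_0^t\ue^{\ui(t-s)\Lc^h}\mathcal N_h(s)\,\ud s\to\phi(t)-\ue^{\ui t\Lc}\phi_0$ in $C_tL^2_x$; it remains only to identify this limit. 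Pairing against an arbitrary Schwartz function $\eta$, writing the nonlinear contribution as $\int_0^t\big\langle|\nabla|^{-\frac18}\mathcal N_h(s),\,|\nabla|^{\frac18}\ue^{-\ui(t-s)\Lc^h}\eta\big\rangle\,\ud s$, and using that $|\nabla|^{\frac18}\ue^{-\ui(t-s)\Lc^h}\eta\to|\nabla|^{\frac18}\ue^{-\ui(t-s)\Lc}\eta$ in $L^4_x$ uniformly for $|s|\le T$ (the symbols converge nicely and $\eta$ is Schwartz), one passes to the limit using \eqref{plan:NL} and a Hölder bound in $s$ to conclude $\langle\phi(t)-\ue^{\ui t\Lc}\phi_0,\eta\rangle=\langle\int_0^t\ue^{\ui(t-s)\Lc}F[\phi(s)]\,\ud s,\eta\rangle$ for all $\eta$, which is \eqref{duhamel}; since both sides lie in $C_tL^2_x$ (the right side by \eqref{Strichartz-ineq} and \eqref{paraproduct-1}, which also shows $F[\phi]\in|\nabla|^{-\frac18}L^{8/7}_tL^{4/3}_x$, so \eqref{duhamel} is meaningful), the identity holds in $C_tL^2_x$.

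\emph{The nonlinear term: proof of \eqref{plan:NL}.} I would split $\mathcal N_h-F[\phi]=\big(\mathcal N_h-P_{\le\pi/h}F[\ph]\big)-P_{>\pi/h}F[\ph]+\big(F[\ph]-F[\phi]\big)$. For $F[\ph]-F[\phi]$: telescoping the cubic difference and applying \eqref{paraproduct-1} with $|\nabla|^{-\frac18}$ placed on whichever factor carries the derivative or the difference $\ph-\phi$ bounds this term by $\ls_T\|\ph-\phi\|_{C_tH^{7/8}_x}\big(\|\ph\|_{L^\infty_tH^1_x}+\|\phi\|_{L^\infty_tH^1_x}\big)^2$, which vanishes by the first step. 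For $P_{>\pi/h}F[\ph]$: since $\ph$ is Fourier-supported in $[-\tfrac\pi h,\tfrac\pi h]$ and bounded in $\dot H^1_x$, an output frequency exceeding $\pi/h$ requires an input frequency $\gs\pi/h$, so \eqref{paraproduct-1} gains a positive power of $h$ and this term vanishes too. The delicate piece is the genuine discretization error $\mathcal N_h-P_{\le\pi/h}F[\ph]$, which has two sources: the symbol $2\ui\sin(h\xi)$ of $\ah_{n+1}-\ah_{n-1}$ differs from the symbol $2\ui h\xi$ of $2h\p_x$ by $O(h^3\xi^3)$, and a product of three functions band-limited to $[-\tfrac\pi h,\tfrac\pi h]$ is band-limited only to $[-\tfrac{3\pi}h,\tfrac{3\pi}h]$, producing aliasing. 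Decomposing $\ah$ into a part with Fourier support in $[-\delta(h),\delta(h)]$ for some $h\ll\delta(h)\ll 1$—for which Lemma~\ref{L:abc} applies, so there is no aliasing and $2\ui\sin(h\xi)$ may be replaced by $2\ui h\xi$ with relative error $O(\delta(h)^2)$—plus a remainder whose $\l^2_n$-norm is $O(\delta(h)^{-1}h^{3/2})$ by \eqref{ah-H^1} and whose contribution is absorbed via \eqref{Strichartz-ineq-d} and \eqref{paraproduct-2}, one finds that the errors that actually survive are concentrated at frequencies $\gs\delta(h)/h$, where the weight $|\nabla|^{-\frac18}$ and the prefactor $3h^{-1}$ combine to a vanishing contribution in $L^{8/7}_tL^{4/3}_x$. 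This establishes \eqref{plan:NL} and completes the proof.
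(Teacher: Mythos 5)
Your proposal is correct and follows essentially the same route as the paper: uniform $H^1$ bounds plus interpolation to get $C_tH^s_x$ convergence of $\phi^h$, passage to the limit in the transported discrete Duhamel formula with the linear term handled by symbol convergence and dominated convergence, and the nonlinear term handled by a frequency truncation (so that Lemma~\ref{L:abc} applies and aliasing is avoided), the discrete Strichartz/paraproduct estimates for the truncation error, the symbol comparison $\sin(h\xi)$ versus $h\xi$, the continuum Strichartz/paraproduct estimates for $F[\phi^h]-F[\phi]$, and a weak pairing to identify the limit. The only differences are organizational — you truncate at a shrinking frequency $\delta(h)/h$ rather than the paper's fixed cutoff at $|\xi|<h^{-1}$, and you test against Schwartz functions rather than general $L^2$ functions — and both variants close.
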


\begin{proof}
For notational simplicity, we will omit the subscript on $h$ in what follows.

As $\phi^h$ converges to $\pe$ in $C([-T,T];L^2_x(\R))$, \eqref{unif bdd} and weak lower-semicontinuity of the $H^1$ norm imply 
\begin{align} \label{phi-H^1}
\|\phi\|_{L^\infty_t H^1_x([-T,T]\times\R)} \ls \|\phi_0\|_{H^1} + \|\phi_0\|_{L^2}^3.
\end{align}
Consequently, $\phi^h$ converges to $\pe$ in $C([-T,T]; H^s_x(\R))$ for all $0\leq s<1$.  Indeed, by \eqref{unif bdd} and \eqref{phi-H^1},
\begin{align} \label{convg_H^s}
\bnm{\phi^{h}(t)-\phi(t)}_{H^s_x}
&\leq \bnm{\phi^{h}(t)-\phi(t)}_{H^1_x}^{s}\bnm{\phi^{h}(t)-\phi(t)}_{L^2_x}^{1-s} \no\\
&\ls \Big(\bnm{\phi^{h}(t)}_{H^1_x}^{s}+\nm{\phi(t)}_{H^1_x}^{s}\Big)\bnm{\phi^{h}(t)-\phi(t)}_{L^2_x}^{1-s}\no\\
&\ls \bigl[\|\phi_0\|_{H^1}+ \|\phi_0\|_{L^2}^3\bigr]^s \bnm{\phi^{h}(t)-\phi(t)}_{L^2_x}^{1-s} \to 0\quad\text{as} \quad h\to 0,
\end{align}
uniformly for $|t|\leq T$. 

In order to prove that the limit $\phi$ satisfies \eqref{duhamel}, our starting point is the Duhamel formula satisfied by the solution $\ah$ of \eqref{mAL}: For any $|t|\leq T$,
\begin{align}\label{ah duh}
    \ah_n(3h^{-3}t) = 
    e^{\ui 3h^{-3}t\Ld}\ah_n(0) 
    +3h^{-3}\int_0^{t} e^{\ui 3h^{-3}(t-s)\Ld}F_n\bigl[\ah(3h^{-3}s)\bigr]\,ds.
\end{align}
Recalling the relation \eqref{psi-def-R}, we find
\begin{equation}\label{ph duh}
\begin{aligned}
\ph(t) &= (\tt_t\circ\rr)\bigl[e^{\ui 3h^{-3}t\Ld}\ah_n(0)\bigr] \\
&\qquad + (\tt_t\circ\rr)\,\Bigl\{3h^{-3}\!\!\int_0^{t} \!\!e^{\ui 3h^{-3}(t-s)\Ld}F_n\bigl[\ah(3h^{-3}s)\bigr]\,ds\Bigr\} .
\end{aligned}
\end{equation}
We will prove \eqref{duhamel} by taking the limit as $h\to0$ of each term appearing in \eqref{ph duh}.  Evidently, \eqref{convg} ensures convergence of the left-hand side.  Convergence of each term on the right-hand side is the subject of our next two lemmas.

\begin{lemma}\label{L:convergence-linear}
Under the hypotheses of Proposition~\ref{Prop:convergence}, we have
\begin{align*}
\lim_{h\to 0}\,\Bigl\| (\tt_t\circ\rr)\bigl[e^{\ui 3h^{-3}t\Ld}\ah_n(0)\bigr] - e^{\ui t\Lc}\pe_0   \Bigr\|_{C_t L_x^2([-T,T]\times\R)} = 0.
\end{align*}
\end{lemma}

\begin{proof}
Using Plancherel, \eqref{E:initial data}, and \eqref{a0hat}, we have
\begin{align*}
\Bigl\| (\tt_t&\circ\rr)\bigl[ e^{\ui 3h^{-3}t\Ld}\ah_n(0)\bigr] - e^{\ui t\Lc}\pe_0   \Bigr\|_{C_t L_x^2([-T,T]\times\R)} \\
&\simeq\Bigl\| e^{\ui6h^{-2}t\xi} e^{\ui 3h^{-3}t[-2\sin(h\xi)]}\widehat\ah(0,h\xi) - e^{\ui t\xi^3}\widehat{\pe_0}(\xi)   \Bigr\|_{C_t L_\xi^2([-T,T]\times\R)}\\
&\leq\Bigl\| \Big[e^{\ui6h^{-3}t[h\xi-\sin(h\xi) -\frac{(h\xi)^3}{6}]} -1\Big] \widehat{P_{> \frac\pi{2h}}\pe_0}(\xi) \Bigr\|_{C_t L_\xi^2([-T,T]\times\R)}
    + \bigl\| \widehat{P_{\geq \frac{\pi}{2h}} \pe_0} \bigr\|_{L_\xi^2(\R)}.
\end{align*}
The claim now follows from the dominated convergence theorem. 
\end{proof}

\begin{lemma}\label{L:last goal} Under the hypotheses of Proposition~\ref{Prop:convergence}, we have
\begin{align}\label{last goal}
\lim_{h\to 0}\, (\tt_t\circ\rr)\,
\Bigl\{3h^{-3}\!\!\int_0^{t} \!\!e^{\ui 3h^{-3}(t-s)\Ld}F_n\bigl[\ah(3h^{-3}s)\bigr]\,ds\Bigr\} = \!\!\int_0^t\!\! \ue^{\ui(t-s)\Lc}F[\pe(s)]\,\ud s
\end{align}
in $C([-T,T];L^2_x(\R))$ sense.
\end{lemma}

Employing the sharp Fourier cutoff $P_h$ to $|\xi|< h^{-1}$, we define
\begin{align*}
\widetilde\phi^h := P_{h\,} \phi^h, \quad \widetilde \ah_n(3h^{-3}t) := h\widetilde\phi^h(t,nh-6h^{-2}t),
\qtq{and} \widetilde F_n(t) &:= F_n\bigl[\widetilde\ah(3h^{-3}t)\bigr].
\end{align*}
For later use, we note that by \eqref{unif bdd}, we have
\begin{align} \label{equi'_H^s}
\big\|\widetilde\phi^h - \phi^h \big\|_{L^\infty_t H_x^s} 
&\ls h^{1-s} \|\phi^h\|_{L^\infty_t H^1_x} \lesssim h^{1-s} [\|\phi_0\|_{H^1} + \|\phi_0\|_{L^2}^3] ,
\end{align}
for any $0\leq s\leq 1$ and $0<h\leq h_0$.  Our reason for projecting to this narrower frequency band is to allow us to apply Lemma~\ref{L:abc}.  Specifically, we have
\begin{align}\label{1065}
(\tt_s\circ\rr) \bigl[ \widetilde F_n(s) \bigr](x)  &= \pm h^2\big|\widetilde\phi^h(s,x)\big|^2\Big[\widetilde\phi^h(s,x+h)-\widetilde\phi^h(s,x-h)\Big] .
\end{align}

The next result shows that, for the purposes of Lemma~\ref{L:last goal}, the nonlinearity of \eqref{mAL} may be replaced by the one based on this more narrowly Fourier localized sequence.

\begin{lemma} \label{L:error}
Under the hypotheses of Proposition~\ref{Prop:convergence}, we have
\begin{align*}
\lim_{h\to0}\,\bigg\| (\tt_t\circ\rr)\,
\bigg\{\tfrac3{h^3}\!\int_0^{t} e^{\ui \frac{3(t-s)}{h^3}\Ld}\Big\{ F_n\bigl[\ah\bigl(\tfrac{3s}{h^3}\bigr)\bigr] -F_n\bigl[\widetilde\ah\bigl(\tfrac{3s}{h^3}\bigr)\bigr]\Big\} \,ds\bigg\}  \bigg\|_{C_t L_x^2([-T,T]\times\R)} =0.
\end{align*}
\end{lemma}

\begin{proof}
Using Lemma~\ref{L:R-opt} and performing a change of variables, we see that 
\begin{align*}
\bigg\| (\tt_t\circ\rr)\,&\bigg\{ \tfrac3{h^3}\!\int_0^{t}e^{\ui \frac{3(t-s)}{h^3}\Ld}\Big\{ F_n\bigl[\ah\bigl(\tfrac{3s}{h^3}\bigr)\bigr] -F_n\bigl[\widetilde\ah\bigl(\tfrac{3s}{h^3}\bigr)\bigr]\Big\} \,ds\bigg\}  \bigg\|_{C_t L_x^2([-T,T]\times\R)} \\
&\simeq h^{-\frac{1}{2}} \bigg\|\int_0^t e^{\ui (t-s)\Ld}\Big\{F_n\bigl[\ah(s)\bigr] -F_n\bigl[\widetilde\ah(s)\bigr]  \Big\} \,ds \bigg\|_{L_t^\infty \l_n^2([-3h^{-3}T,3h^{-3}T]\times\Z)}. 
\end{align*}

Using the discrete Strichartz inequality \eqref{Strichartz-ineq-d}, H\"older (for the time integral), and the discrete paraproduct estimate \eqref{paraproduct-2} from Lemma~\ref{L:paraproduct}, we may bound \begin{align*}
&h^{-\frac{1}{2}} \bigg\| \int_0^t  e^{\ui (t-s)\Ld}\Big\{F_n\bigl[\ah(s)\bigr] -F_n\bigl[\widetilde\ah(s)\bigr]  \Big\} \,ds \bigg\|_{L_t^\infty \l_n^2}\\
&\ls h^{-\frac{1}{2}} \Big\|\FDds^{-\frac{1}{8}} \Big\{ F_n\bigl[\ah(s)\bigr] -F_n\bigl[\widetilde\ah(s)\bigr]  \Big\} \Big\|_{L_t^{\frac{8}{7}}\l_n^{\frac{4}{3}}} \\
&\ls h^{-\frac{1}{2}} \big(\tfrac{3T}{h^{3}}\big)^{\frac{7}{8}}
    \bigg\{  \big\|\FDds^{-\frac{1}{8}} \big[\widetilde\alpha_{n+1}-\widetilde\alpha_{n-1}\big] \Big\|_{L_t^{\infty}\l_n^{2}} \cdot \bnm{\FDds^{\frac{3}{8}} \big[\alpha-\widetilde\alpha\big]}_{L_t^{\infty}\l_n^{2}} \\
&\qquad\qquad\qquad\qquad \qquad\cdot \Big[\bnm{\FDds^{\frac{3}{8}} \alpha}_{L_t^{\infty}\l_n^{2}} +\bnm{\FDds^{\frac{3}{8}} \widetilde\alpha}_{L_t^{\infty}\l_n^{2}} \Big]\\
&\qquad\qquad\qquad\quad +\big\|\FDds^{-\frac{1}{8}} \big[(\alpha - \widetilde\alpha)_{n+1}-(\alpha - \widetilde\alpha)_{n-1}\big] \big\|_{L_t^{\infty}\l_n^{2}} \bnm{\FDds^{\frac{3}{8}} \alpha}_{L_t^{\infty}\l_n^{2}}^2 \bigg\},
\end{align*}
where the all norms are over $[-3h^{-3}T,3h^{-3}T]\times\Z$.

Using \eqref{E:mass bound} and \eqref{ah-H^1}, we find
\begin{align*} 
\bnm{\FDds^{\frac{3}{8}} \alpha}_{L_t^{\infty}\l_n^{2}}  +\bnm{\FDds^{\frac{3}{8}} \widetilde\alpha}_{L_t^{\infty}\l_n^{2}}
&\lesssim h^{\frac78} \bigl[ \|\phi_0\|_{H^1} + \|\phi_0\|_{L^2}^3\bigr],\\
\big\|\FDds^{-\frac{1}{8}} \big[\widetilde\alpha_{n+1}-\widetilde\alpha_{n-1}\big] \big\|_{L_t^{\infty}\l_n^{2}}
\lesssim \big\|\FDds^{\frac{7}{8}}\widetilde\alpha\big\|_{L_t^{\infty}\l_n^{2}}
& \lesssim h^{\frac{11}8} \bigl[ \|\phi_0\|_{H^1} + \|\phi_0\|_{L^2}^3\bigr],\\
\bnm{\FDds^{\frac{3}{8}} \big[\alpha-\widetilde\alpha\big]}_{L_t^{\infty}\l_n^{2}}\lesssim \big\| |\theta|^{\frac38}\, \widehat\alpha(\theta)\big\|_{L_t^{\infty}L_\theta^{2}(1\leq|\theta|\leq \pi)}
&\lesssim  \big\||\theta| \,\widehat\alpha(\theta)\big\|_{L_t^{\infty}L_\theta^{2}}\\
& \lesssim h^{\frac32} \bigl[ \|\phi_0\|_{H^1} + \|\phi_0\|_{L^2}^3\bigr]
\end{align*}
and
\begin{align*}
\big\|\FDds^{-\frac{1}{8}} \big[(\alpha - \widetilde\alpha)_{n+1}-(\alpha - \widetilde\alpha)_{n-1}\big] \big\|_{L_t^{\infty}\l_n^{2}}
&\lesssim  \big\|\FDds^{\frac{7}{8}}(\alpha - \widetilde\alpha)\big\|_{L_t^{\infty}\l_n^{2}}\\
&\lesssim \big\| |\theta|^{\frac78}\, \widehat\alpha(\theta)\big\|_{L_t^{\infty}L_\theta^{2}(1\leq|\theta|\leq \pi)}\\
&\lesssim  \big\||\theta| \,\widehat\alpha(\theta)\big\|_{L_t^{\infty}L_\theta^{2}}\\
& \lesssim h^{\frac32} \bigl[ \|\phi_0\|_{H^1} + \|\phi_0\|_{L^2}^3\bigr].
\end{align*}

Collecting these bounds, we obtain
\begin{align*}
h^{-\frac{1}{2}} \bigg\|\int_0^t e^{\ui (t-s)\Ld}\Big\{F_n\bigl[\ah(s)\bigr] &-F_n\bigl[\widetilde\ah(s)\bigr]  \Big\} \,ds \bigg\|_{L_t^\infty \l_n^2([-\frac{3T}{h^{3}},\frac{3T}{h^{3}}]\times\Z)}\\
&\lesssim_T  h^{\frac18}\bigl[ \|\phi_0\|_{H^1} + \|\phi_0\|_{L^2}^3\bigr]^3,
\end{align*}
which converges to zero as $h\to 0$, thus completing the proof of the lemma.
\end{proof}

\begin{proof}[Proof of Lemma~\ref{L:last goal}]
As we have already demonstrated convergence of the other two terms in \eqref{ph duh}, the existence of the limit in \eqref{last goal} is already settled.  Our objective is to verify that the limit is the one given by the right-hand side of \eqref{last goal}.  In view of Lemma~\ref{L:error}, it suffices to show that
\begin{align}\label{10:03}
\Bigl\langle f,\, (\tt_t\circ\rr)\,
\Bigl\{\tfrac3{h^{3}} \! \int_0^{t} e^{\ui 3h^{-3}(t-s)\Ld}\widetilde{F}_n(s)\,ds\Bigr\} \Bigr\rangle
	\to \Bigl\langle f,\,\int_0^t\ue^{\ui(t-s)\Lc}F[\pe(s)]\,\ud s\Bigr\rangle 
\end{align}
as $h\to 0$, for any $f\in L^2(\R)$.

As a first step toward verifying \eqref{10:03}, we would like to pass from the discrete to the continuum propagator.  By the Plancherel Theorem,
\begin{align}
\Bigl\langle f, \ (\tt_t\circ\rr)\, &\Bigl\{ e^{\ui 3h^{-3}(t-s)\Ld}\widetilde{F}_n(s) \Bigr\} 
	- e^{\ui (t-s)\Lc} (\tt_s\circ\rr)\,\bigl\{\widetilde{F}_n(s)\bigr\}\Bigr\rangle_{\!L_x^2} \notag\\
&= \Bigl\langle \widehat f(\xi), \bigl[ e^{\ui 6 h^{-2}t\xi - 6\ui h^{-3}(t-s)\sin(h\xi)} - e^{\ui(t-s)\xi^3 + \ui 6 h^{-2}s\xi} \bigr]
	\widehat{\widetilde{F}}(s)\Bigr\rangle_{\!L_\xi^2} \notag\\
&= \Bigl\langle \bigl[ e^{-\ui (t-s)\omega_h(\xi)} - 1\bigr] \widehat f(\xi), e^{\ui(t-s)\xi^3 + \ui 6 h^{-2}s\xi} \widehat{\widetilde{F}}(s)\Bigr\rangle_{\!L_\xi^2} \label{1066}
\end{align}
where $\omega_h(\xi):=h^{-3}[6h\xi-6\sin(h\xi)-(h\xi)^3]$, which satisfies $|\omega_h(\xi)| \lesssim h^{-3}|h\xi|^5$.  Thus by the dominated convergence theorem,
\begin{align*}
\big\|[ e^{-\ui (t-s)\omega_h(\xi)} - 1\bigr] \widehat{f}(\xi) \big\|_{L_\xi^2} \to 0 \quad\text{uniformly for } s, t\in [-T, T],
\end{align*}
as $h\to 0$.  To treat the other side of the inner product in \eqref{1066}, we use Plancherel, \eqref{ah-H^1}, and Corollary~\ref{Cor:L^infty} to estimate
\begin{align*}
\bigl\| \widehat{\widetilde{F}}(s,h\xi) \bigr\|_{L_s^\infty L_\xi^2} 
\lesssim h^{-\frac12} \|\widetilde F\|_{L_t^\infty \ell^2_n}
&\lesssim h^{-\frac12} \|\alpha\|_{L_t^\infty \ell^\infty_n}^2 \|\alpha_{n+1} - \alpha_{n-1}\|_{L_t^\infty \ell^2_n}\\
&\lesssim h^3 \bigl[\nm{\phi_0}_{H^1}+ \|\phi_0\|_{L^2}^3\bigr]^3.
\end{align*}

Combining all the estimates of the previous paragraph, we deduce that
\begin{align}
\tfrac3{h^{3}} \! \int_0^{t}  \Bigl\langle f, (\tt_t\circ\rr)\, &\Bigl\{ e^{\ui 3h^{-3}(t-s)\Ld}\widetilde{F}_n(s) \Bigr\} 
	- e^{\ui (t-s)\Lc} (\tt_s\circ\rr)\,\bigl\{\widetilde{F}_n(s)\bigr\}\Bigr\rangle_{\!L_x^2}\,ds \to 0
\end{align}
as $h\to 0$, uniformly for $|t|\leq T$.  In this way, our goal of proving \eqref{10:03} may be fulfilled by showing that
\begin{align}\label{1068}
 \Bigl\|  \int_0^{t} \!\! e^{\ui (t-s)\Lc} \Bigl[ \pm6\abs{\pe(s)}^2\p_x\pe(s) - \tfrac3{h^{3}} (\tt_s\circ\rr)\,\bigl\{\widetilde{F}_n(s)\bigr\}\Bigr]\,ds  \Bigr\|_{L^2_x} \to 0
\end{align}
as $h\to 0$, uniformly for $|t|\leq T$.

We will prove \eqref{1068} by rewriting the nonlinearity using \eqref{1065} and then applying the Strichartz inequality \eqref{Strichartz-ineq}.  We estimate the resulting expression as follows, using 
the paraproduct estimate \eqref{paraproduct-1} from Lemma~\ref{L:paraproduct}, Proposition~\ref{Prop:H^1-bdd}, and \eqref{phi-H^1}:
\begin{align*}
\Big\| & \abs{\nabla}^{-\frac{1}{8}} \Big\{\big|\widetilde\phi^h\big|^2 \tfrac{1}{2h}\big[\widetilde\phi^h(\cdot+h)-\widetilde\phi^h(\cdot-h)\big] - \abs{\pe}^2\p_x\pe \Big\} \Big\|_{L_t^{\frac{8}{7}}L_x^{\frac{4}{3}}} \\
&\ls_T \Big\|\abs{\nabla}^{-\frac{1}{8}} \Big\{ \tfrac{1}{2h}\big[\widetilde\phi^h(\cdot+h)-\widetilde\phi^h(\cdot-h)\big] - \p_x\pe \Big\} \Big\|_{L_t^{\infty}L_x^{2}} \bnm{\widetilde\phi^h}_{L_t^{\infty}\dot{H}_x^{\frac{3}{8}}}^2 \notag\\
&\qquad + \big\|\abs{\nabla}^{-\frac{1}{8}} \p_x\pe \big\|_{L_t^{\infty}L_x^{2}} \bnm{\widetilde\phi^h -\pe}_{L_t^{\infty}\dot{H}_x^{\frac{3}{8}}}
    \Bigl[\bnm{\widetilde\phi^h}_{L_t^{\infty}\dot{H}_x^{\frac{3}{8}}} +\nm{\pe}_{L_t^{\infty}\dot{H}_x^{\frac{3}{8}}} \Bigr] \notag\\
&\ls_T \Bigl[\|\phi_0\|_{H^1}+ \|\phi_0\|_{L^2}^3\Bigr]^2 \notag\\
&\qquad \cdot\Bigl[\Big\|\abs{\nabla}^{-\frac{1}{8}} \Big\{ \tfrac{1}{2h}\big[\widetilde\phi^h(\cdot+h)-\widetilde\phi^h(\cdot-h)\big] - \p_x\pe \Big\} \Big\|_{L_t^{\infty}L_x^{2}} +  \bnm{\widetilde\phi^h -\pe}_{L_t^{\infty}\dot{H}_x^{\frac{3}{8}}}\Bigr],\notag
\end{align*}
where all spacetime norms are over the region $[-T,T]\times\R$. In view of \eqref{convg_H^s} and \eqref{equi'_H^s}, we have
\begin{align*}
\bnm{\widetilde\phi^h -\pe}_{L_t^{\infty}\dot{H}_x^{\frac{3}{8}}}
\leq \bnm{\phi^h -\pe}_{L_t^{\infty}\dot{H}_x^{\frac{3}{8}}} + \bnm{\phi^h-\widetilde\phi^h}_{L_t^{\infty}\dot{H}_x^{\frac{3}{8}}} \to 0  \quad\text{as $h\to 0$}.
\end{align*}
Moreover, noting that $\babs{\tfrac{\sin(h\xi)}{h\xi}-1} \ls \min\{1, h^2|\xi|^2\} \ls |h\xi|^{\frac18}$, we find that
\begin{align*}
\Big\|\abs{\nabla}^{-\frac{1}{8}} &\Big\{ \tfrac{1}{2h}\big[\widetilde\phi^h(\cdot+h)-\widetilde\phi^h(\cdot-h)\big] - \p_x\pe \Big\} \Big\|_{L_t^{\infty}L_x^{2}} \\
&\ls \Big\|\abs{\xi}^{\frac{7}{8}} \big[\tfrac{\sin(h\xi)}{h\xi} - 1 \big] \widehat{\widetilde\phi^h}(t,\xi) \Big\|_{L_t^{\infty}L_{\xi}^{2}}
    + \bnm{\widetilde\phi^h -\pe}_{L_t^{\infty}\dot{H}_x^{\frac{7}{8}}} \\
&\ls h^{\frac18} \big\| \widetilde\phi^h \big\|_{L_t^{\infty}H^1_x} 
    + \bnm{\widetilde\phi^h -\pe}_{L_t^{\infty}\dot{H}_x^{\frac{7}{8}}} .
\end{align*}
From \eqref{unif bdd} and \eqref{equi'_H^s}, we see that this converges to zero as $h\to0$.

Having shown that the difference of the nonlinearities from \eqref{1068} converges to zero in the space dictated by the Strichartz inequality \eqref{Strichartz-ineq}, we now know that \eqref{1068} holds.  This completes the proof of \eqref{10:03} and so that of Lemma~\ref{L:last goal}.
\end{proof}

Combining Lemmas~\ref{L:convergence-linear} and \ref{L:last goal} with \eqref{convg}, we see that \eqref{duhamel} follows from \eqref{ph duh}. The proof of Proposition~\ref{Prop:convergence} is now complete.
\end{proof}

We are finally ready to prove our main result:

\begin{proof}[Proof of Theorem~\ref{thm:main}]  As noted at the beginning of this section, Proposition~\ref{thm:pre-compactness} guarantees that every sequence $h_n\rt 0$ admits a subsequence so that $\pe^{h_n}$ converges in $C([-T,T];L^2_x(\R))$.  By Proposition~\ref{Prop:convergence}, the limiting function $\pe$ lies in $[C_tL^2_x\cap L^{\infty}_tH^1_x]([-T,T]\times\R)$ and solves the integral equation \eqref{duhamel}.

On the other hand, by the unconditional uniqueness result from Theorem~\ref{T:UCU}, this integral equation admits a unique $ L^{\infty}_tH^1_x$ solution.

This implies that all subsequential limits of $\phi^h$ agree, and so $\ph$ converges in $C([-T,T];L^2_x(\R))$ as $h\to0$ (without having to pass to a subsequence) and the resulting limit is the unique solution to \eqref{mKdV} with initial data $\pe_0$. Moreover, the uniform $H^1$ bounds \eqref{unif bdd} and \eqref{phi-H^1} allow us to upgrade this convergence to the modes of convergence described in \eqref{258} and \eqref{259}.
\end{proof}

\appendix
\section{Unconditional uniqueness for complex mKdV}\label{S:UCU}

The purpose of this appendix is to prove Theorem~\ref{T:UCU}.
For real-valued solutions, uniqueness under substantially weaker assumptions was shown in \cite{Kwon.Oh.Yoon2020}; however, this required a rather sophisticated and specialized argument.

As we will see, the arguments presented here are elementary and can be applied to a variety of gKdV-like equations.  For example, we could treat any linear combination of nonlinearities that are cubic or higher order.  Furthermore, the derivative may land on any term; it does not need to be outermost.  This is a key difference between the real and complex \eqref{mKdV} equations.

The construction of solutions in \cite{MR1211741} relies on contraction mapping in a class of functions satisfying some additional spacetime bounds.  Consequently, there can be only one solution satisfying these additional bounds.   In view of this, one may prove unconditional uniqueness by demonstrating that all $L^\infty_tH^1_x$ solutions to \eqref{mKdV} satisfy these additional spacetime bounds.  This can be achieved with minor modifications to what follows; however, the estimates we obtain below already suffice to demonstrate uniqueness via a simple Gronwall argument, so this is what we will present instead. 

To define the additional norms we use, we require a family of localizing functions.  For concreteness, we choose
\begin{align}\label{Po1}
\chi_j(x) := \frac{\sech(x-j)}{\bigl[\sum_k \sech^6(x-k)\bigr]^{\frac16}}  \quad\text{for each $j\in\Z$.}
\end{align}
Evidently, $\sum \chi_j^6(x) \equiv 1$ and the derivatives satisfy $|\partial_x^m\chi_j(x)| \lesssim_m \chi_j(x)$ for all $m\geq 0$.

\begin{proposition}\label{P:UCU}
Suppose $\phi\in L^\infty_tH^1_x$ is a solution to \eqref{mKdV} in the sense of \eqref{UCU Duhamel}.  Then for any $T>0$,
\begin{align}\label{UCU bounds}
\sum_{j\in\Z} \| \chi_j \phi \|_{L^\infty_{t,x}([-T,T]\times\R)}^4 < \infty \qtq{and}  \sup_j \| \chi_j \phi'' \|_{L^2_{t,x}([-T,T]\times\R)}^2 < \infty.
\end{align}
\end{proposition}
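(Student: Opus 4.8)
The plan is to exploit the smoothing properties of the Airy propagator together with the $L^\infty_t H^1_x$ bound on $\phi$ to bootstrap local-in-$x$ spacetime control of two derivatives. First I would record the two elementary facts about the weights $\chi_j$: namely $\sum_j \chi_j^6 \equiv 1$ and $|\partial_x^m \chi_j|\lesssim_m \chi_j$, which make the family $\{\chi_j\}$ behave like a smooth partition of unity adapted to unit-length intervals, with all derivatives controlled by the weight itself. Because $\phi\in L^\infty_t H^1_x$, we trivially have $\sum_j \|\chi_j\phi\|_{L^\infty_t H^1_x}^2 \lesssim \|\phi\|_{L^\infty_t H^1_x}^2 < \infty$, and by Sobolev embedding $\|\chi_j\phi\|_{L^\infty_{t,x}} \lesssim \|\chi_j \phi\|_{L^\infty_t H^1_x}$; this already gives the \emph{first} bound in \eqref{UCU bounds}, and in fact in the stronger form $\sum_j\|\chi_j\phi\|_{L^\infty_{t,x}}^2 < \infty$.

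For the second bound I would work from the Duhamel formula \eqref{UCU Duhamel}. The idea is to apply a local smoothing estimate for the Airy flow: the homogeneous solution $e^{-t\partial^3}\phi_0$ gains two derivatives when measured in $L^2_{t}L^\infty_x$-type norms localized in space, i.e.\ $\|\chi_j \partial_x^2 e^{-t\partial^3}\phi_0\|_{L^2_{t,x}([-T,T]\times\R)} \lesssim \|\phi_0\|_{H^1}$ with a constant uniform in $j$ (Kato-type local smoothing). For the Duhamel term one applies the dual/inhomogeneous local smoothing estimate, which bounds $\|\chi_j \partial_x^2 \int_0^t e^{-(t-s)\partial^3} G(s)\,ds\|_{L^2_{t,x}}$ by a sum over $k$ of $\|\chi_k G\|_{L^{1}_t L^2_x}$ (or an $\ell^1$-in-$k$ weighted version), where $G = \pm 6|\phi|^2\phi'$. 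The nonlinearity is estimated pointwise in $x$ and $t$ by $\|\chi_k G\|_{L^2_x}\lesssim \|\chi_k \phi\|_{L^\infty_{t,x}}^2 \|\chi_k \phi'\|_{L^2_x}$ using the derivative bounds on $\chi_k$; integrating in $t$ over $[-T,T]$ and using the already-established $\ell^2$-summability of $\|\chi_k\phi\|_{L^\infty_{t,x}}$ together with $\|\phi\|_{L^\infty_t H^1_x}<\infty$ closes the estimate, yielding $\sup_j\|\chi_j\phi''\|_{L^2_{t,x}}^2<\infty$. One subtlety is that the weights $\chi_j$ and $\chi_k$ overlap only for $|j-k|$ bounded, with exponentially decaying tails coming from $\sech$, so the sum over $k$ in the local smoothing estimate is effectively a near-diagonal sum with summable off-diagonal weights; this is precisely where the specific choice \eqref{Po1} is convenient.

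The main obstacle I anticipate is making the local smoothing machinery genuinely uniform in the lattice index $j$ and handling the interaction between the weights and the propagator. The Airy group does not commute with multiplication by $\chi_j$, so one cannot simply ``move $\chi_j$ inside''; instead one must either use a genuine local smoothing estimate (which is translation-invariant, hence automatically uniform in $j$) applied after inserting the partition of unity, or commute $\chi_j$ past $e^{-(t-s)\partial^3}$ at the cost of commutator terms involving $[\partial^3,\chi_j]$, which are lower order and again controlled by $\chi_j$ and its derivatives. The cleanest route is the former: apply Kato smoothing in the form that $\||\nabla| e^{-t\partial^3} f\|_{L^\infty_x L^2_t} \lesssim \|f\|_{L^2_x}$ (and its inhomogeneous and $\partial_x^2$-variants obtained by trading one power of $L^2_x$ integrability for a derivative via interpolation with the $L^\infty_t L^2_x$ energy bound), then localize with $\chi_j$ and sum. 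Once the two displays in \eqref{UCU bounds} are in hand the proposition is proved; these same bounds are what drive the Gronwall uniqueness argument in Theorem~\ref{T:UCU}.
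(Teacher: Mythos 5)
There is a genuine gap, and it is in the step you dismiss as trivial. The first bound in \eqref{UCU bounds} does \emph{not} follow from $\phi\in L^\infty_tH^1_x$ alone: in $\sum_{j} \|\chi_j\phi\|_{L^\infty_t H^1_x}^2$ the supremum over $t$ sits \emph{inside} the sum over $j$, and you have implicitly interchanged $\sup_t$ and $\sum_j$. For a fixed time, $\sum_j\|\chi_j\phi(t)\|_{H^1_x}^2\lesssim\|\phi(t)\|_{H^1_x}^2$ is fine, but $\sum_j\sup_{|t|\le T}\|\chi_j\phi(t)\|_{H^1_x}^2$ can be infinite for a bounded, continuous $H^1$-valued curve: take bumps of amplitude $a_j\to0$ with $\sum a_j^4=\infty$, centered at spatial position $j$ and active on disjoint short time intervals accumulating at $t=0$; this lies in $C_tH^1_x\cap L^\infty_tH^1_x$ but $\sum_j\|\chi_j\phi\|_{L^\infty_{t,x}}^4=\infty$. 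So the first display is a statement about \emph{solutions}, not about arbitrary $L^\infty_tH^1_x$ functions, and it must be extracted from the equation. This is exactly what the paper does: a localized almost-conservation of mass (differentiating $\int\chi_j^2|\phi_\eps|^2$, after mollifying with $(1-\eps^2\partial^2)^{-2}$ to justify the integrations by parts for rough solutions) gives $\sum_j\sup_{|t|\le T}\|\chi_j\phi(t)\|_{L^2_x}^2<\infty$ as in \eqref{1369}, and only then does Gagliardo--Nirenberg yield the $\ell^4_j$ bound on $\|\chi_j\phi\|_{L^\infty_{t,x}}$.

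Your route to the second bound also needs repair, though it is closer to salvageable. The inhomogeneous estimate you invoke --- a gain of \emph{two} derivatives, locally in $x$, from $\|\chi_kG\|_{L^1_tL^2_x}$ --- is not a valid Airy estimate; the retarded gain from $L^1_tL^2_x$ is one derivative, and the two-derivative gain in $L^\infty_xL^2_t$ requires the dual local-smoothing norm $\|G\|_{L^1_xL^2_t}$ (Kenig--Ponce--Vega). With that corrected norm one could estimate $\|G\|_{L^1_xL^2_t}\lesssim\sum_k\|\chi_k\phi\|_{L^\infty_{t,x}}^2\|\chi_k\phi'\|_{L^2_{t,x}}$ and close by Cauchy--Schwarz in $k$ --- but this again leans on the $\ell^4_j$ summability of $\|\chi_j\phi\|_{L^\infty_{t,x}}$, i.e.\ on the unproven first bound, so the proposal does not close as written. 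For comparison, the paper avoids propagator estimates entirely: it runs a weighted energy identity with weight $w_j$ a primitive of $\chi_j^2$, so that the term $-3\int|\chi_j\phi_\eps''|^2$ appears directly (Kato smoothing at the level of the equation), and the nonlinear terms are absorbed using the already-established first bound together with a small parameter $\delta$.
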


\begin{proof}
From the integral equation \eqref{UCU Duhamel}, one easily sees that $\phi\in C_t^{ }L^2_x$.  From this, it then follows that $\phi\in C_t^{ }H^s_x \cap C_t^1 H^{s-3}$ for any $s<1$.  In particular,
$$
\tfrac{d\ }{dt} \phi = -\phi''' \pm 6|\phi|^2\phi' \in C_t^{ }H^{s-3}_x \cap L^\infty_t H^{-2}_x .
$$

For each $0<\eps<\tfrac1{100}$, we define
\begin{align}\label{moll}
\phi_\eps(t,x):= [(1-\eps^2\partial^2)^{-2} \phi(t)](x) = \int \tfrac 1{4\eps} \bigl[ 1 + \tfrac{|x-y|}{\eps}\bigr]\ue^{-|x-y|/\eps} \phi(t,y)\,dy,
\end{align}
which satisfies
$$
\tfrac{d\ }{dt} \phi_\eps = -\phi_\eps''' \pm 6\big( |\phi|^2\phi'\bigr)_{\mkern -2mu\eps}\, .
$$
Here the nonlinearity is mollified in the same sense as \eqref{moll}.

By restricting $\eps<\tfrac1{100}$, we ensure that our mollifying kernel decays significantly faster than the localizing functions $\chi_j$.  Correspondingly, Schur's test shows that
\begin{align}\label{1354}
\| \chi_j^m f_\eps \|_{L^p(\R)} \lesssim \| \chi_j^m f \|_{L^p(\R)} 
\end{align}
uniformly for $1\leq p \leq \infty$, $f\in L^p(\R)$, and for any integer power $1\leq m\leq 3$.

The additional smoothness of $\phi_\eps$ allows us to integrate by parts with impunity and so verify that
\begin{align*}
\frac{d\ }{dt} \int \chi_j^2 |\phi_\eps|^2\,dx = \int -3\bigl(\chi_j^2\bigr)' |\phi_\eps'|^2 + \bigl(\chi_j^2\bigr)''' |\phi_\eps|^2
	\pm 12 \chi_j^2 \Re\Bigl\{ \overline{\phi}_\eps \big( |\phi|^2\phi'\bigr)_{\mkern -2mu\eps}\Bigr\} \,dx .
\end{align*}
Recalling \eqref{1354} and the fact that $\chi_j$ bounds its own derivatives, we deduce that
\begin{align*}
\sup_{|t|\leq T} \| \chi_j \phi_\eps(t) \|_{L^2_x}^2  \lesssim \| \chi_j \phi(0) \|_{L^2_x}^2
	+ \int_{-T}^T \bigl( 1 + \| \phi(s)\|_{H^1_x}^2 \bigr) \|\chi_j \phi(s) \|_{H^1_x}^2 \, ds .
\end{align*}
We now send $\eps\to0$ to obtain
\begin{align}\label{1369}
\sum_j \ \sup_{|t|\leq T} \| \chi_j \phi(t) \|_{L^2_x}^2  \lesssim_T \| \phi\|_{L^\infty_t H^1_x}^2 +  \| \phi\|_{L^\infty_t H^1_x}^4 .
\end{align}
This leads quickly to the first claim in \eqref{UCU bounds}.  Indeed, we simply combine it with the elementary Gagliardo--Nirenberg inequality
$$
\| \chi_j \phi(t) \|_{L^\infty_x}^4 \lesssim \| [\chi_j \phi]'(t) \|_{L^2_x}^2 \| \chi_j \phi(t) \|_{L^2_x}^2
	\lesssim \|\phi\|_{L^\infty_t H^1_x}^2 \| \chi_j \phi(t) \|_{L^2_x}^2 ,
$$
to see that
\begin{align}\label{infty bdd}
\sum_{j\in\Z} \| \chi_j \phi \|_{L^\infty_{t,x}([-T,T]\times\R)}^4 \lesssim \| \phi\|_{L^\infty_t H^1_x}^4 +  \| \phi\|_{L^\infty_t H^1_x}^6 .
\end{align}

We turn now to the second claim in \eqref{UCU bounds}, which has the form of a Kato smoothing estimate.  With this model in mind, we choose $w(x)$ to be a primitive (antiderivative) of $\chi_0^2(x)$ and then define $w_j(x):=w(x-j)$. Mirroring our earlier computation, we find that
\begin{align}\label{1379}
\frac{d\ }{dt} \int w_j |\phi_\eps'|^2\,dx = \int -3 |\chi_j \phi_\eps''|^2 + \bigl(\chi_j^2\bigr)'' |\phi_\eps'|^2
	\pm 12 w_j \Re\Bigl\{ {\overline{\phi}}_\eps^{\mkern+2mu\prime} \big( |\phi|^2\phi'\bigr)_{\mkern -2mu\eps}'\Bigr\} \,dx,
\end{align}
which we then integrate over the time interval $[-T,T]$.  Two of the resulting terms are estimated very easily:
 \begin{gather}\label{1381}
\sup_{j\in\Z}\ \sup_{t\in\R}\ \int w_j |\phi_\eps'(t)|^2\,dx \lesssim \| \phi \|_{L^\infty_t H^1_x}^2,  \\
\label{1382}
\sup_{j\in\Z}\ \int_{-T}^T \int \bigl(\chi_j^2\bigr)'' |\phi_\eps'(t)|^2 \,dx\,dt \lesssim T \| \phi \|_{L^\infty_t H^1_x}^2.
\end{gather}

To estimate the contribution of the last term from \eqref{1379}, we integrate by parts and use that $\chi_k^{6}$ form a partition of unity:
\begin{align*}
\biggl| \int_{-T}^T \int  w_j {\overline{\phi}}_\eps^{\mkern+2mu\prime} \big( |\phi|^2\phi'\bigr)_{\mkern -2mu\eps}'\Bigr\} \,dx\,dt \biggr|
&\lesssim T \| \phi \|_{L^\infty_t H^1_x}^4 + \sum_k \! \int_{-T}^T \| \chi_k \phi_\eps''\|_{L^2_x}
 	\bigl\| \chi_k^3 ( |\phi|^2\phi'\bigr)_{\mkern -2mu\eps}\bigr\|_{L^2_x} \,dt .
\end{align*}
We then employ \eqref{1354}, Cauchy--Schwarz, \eqref{infty bdd}, and the elementary inequality
$$
\sum_k \| \chi_k \phi'\|_{L^2_{t,x}([-T,T]\times\R)}^2 \lesssim T \| \phi\|_{L^\infty_t H^1_x}^2
$$
to deduce that
\begin{align*}
\sum_k \int_{-T}^T \bigl\| \chi_k \phi_\eps''\bigr\|_{L^2_x}  \bigl\| \chi_k^3 ( |\phi|^2 & \phi' \bigr)_{\mkern -2mu\eps} \bigr\|_{L^2_x} \,dt \\
&\lesssim \sum_k \bigl\| \chi_k \phi_\eps''\bigr\|_{L^2_{t,x}} \| \chi_k \phi \|_{L^\infty_{t,x}}^2 \bigl\| \chi_k \phi'\bigr\|_{L^2_{t,x}} \\
&\lesssim \sqrt{T} \, \Bigl(1 + \| \phi\|_{L^\infty_t H^1_x}^2 \Bigr) \| \phi\|_{L^\infty_t H^1_x}^3 \sup_k \, \bigl\| \chi_k \phi_\eps''\bigr\|_{L^2_{t,x}} \\
&\lesssim  T \delta^{-2} \, \Bigl(1 + \| \phi\|_{L^\infty_t H^1_x}^4 \Bigr) \| \phi\|_{L^\infty_t H^1_x}^6
	+ \delta^2 \sup_k \, \bigl\| \chi_k \phi_\eps''\bigr\|_{L^2_{t,x}}
\end{align*}
uniformly for $\delta\in(0,1)$.  Combining everything in this paragraph yields
\begin{align}\label{1383}
 \biggl| \int_{-T}^T \int  w_j {\overline{\phi}}_\eps^{\mkern+2mu\prime} \big( |\phi|^2\phi'\bigr)_{\mkern -2mu\eps}' \,dx\,dt \biggr|
 &\lesssim T \delta^{-2} \Bigl(1 + \| \phi\|_{L^\infty_t H^1_x}^{10} \Bigr) + \delta^2 \sup_k \| \chi_k \phi_\eps''\|_{L^2_{t,x}}
\end{align}
uniformly for $\delta\in(0,1)$.

Combining \eqref{1379}, \eqref{1381}, \eqref{1382}, and \eqref{1383}, we deduce that
\begin{align}\label{1414}
\sup_j \, \bigl\| \chi_j \phi_\eps''\bigr\|_{L^2_{t,x}}^2 \lesssim (1 + T) \delta^{-2} \Bigl(1 + \| \phi\|_{L^\infty_t H^1_x}^{10} \Bigr) + \delta^2 \sup_k \| \chi_k \phi_\eps''\|_{L^2_{t,x}}
\end{align}
uniformly for $\delta\in(0,1)$.  Choosing $\delta$ small enough to overcome the (absolute) implicit constant and then sending $\eps\to 0$, yields the second bound in \eqref{UCU bounds}.
\end{proof}

\begin{proof}[Proof of Theorem~\ref{T:UCU}]
As noted earlier, the existence of solutions $\phi\in L^\infty_t H^1_x$ to \eqref{UCU Duhamel} follows from the arguments in \cite{MR1211741}.  We need only address uniqueness.

Using Proposition~\ref{P:UCU}, we see that for any such solution $\phi$ and any $T>0$,
\begin{align*}
\bigl\| |\phi|^2 \phi'' \|_{L^2_{t,x}([-T,T]\times\R)}^2 &= \sum_j \int_{-T}^T \int \bigl|\chi_j\phi''\bigr|^2 \bigl|\chi_j \phi\bigr|^4 \,dx\,dt \\
&\lesssim \sup_j \|\chi_j\phi''\|_{L^2_{t,x}}^2  \cdot \sum_j  \|\chi_j \phi\|_{L^\infty_{t,x}}^4 < \infty
\end{align*}
and consequently, $|\phi|^2 \phi'' \in L^1_t L^2_x([-T,T]\times\R)$.  Likewise, any $L^\infty_tH^1_x$ solution $\phi$ satisfies
$$
\bigl\| (|\phi|^2)' \phi' \|_{L^{6/5}_t L^1_x([-T,T]\times\R)} \lesssim T^{6/5} \| \phi \|_{L^\infty_t H^1_x}^3 < \infty. 
$$
Taken together these estimates control the derivative of the nonlinearity in dual Strichartz spaces.  Applying the Strichartz inequality  \cite{GV,Kenig.Ponce.Vega1991} to \eqref{UCU Duhamel}, we deduce that
\begin{align}\label{1439}
\bigl\| \phi' \|_{L^6_tL^\infty_x} &\lesssim \| \phi_0\|_{H^1} +  \bigl\| |\phi|^2 \phi'' \|_{L^1_tL^2_x}
	+ \bigl\| (|\phi|^2)' \phi' \|_{L^{6/5}_t L^1_x} < \infty,
\end{align}
where all spacetime norms are over $[-T,T]\times\R$.

Suppose now that we have two $L^\infty_t H^1_x$ solutions $\phi$ and $\psi$ to \eqref{UCU Duhamel} with the same initial data $\phi_0$.  By the analysis above, we know that $\phi',\psi' \in L^6_tL^\infty_x([-T,T]\times\R)$ for every $T>0$.

Mimicking the mollification argument in Propostion~\ref{P:UCU}, we find that
\begin{align*}
\tfrac{d\ }{dt} \bigl\| \phi-\psi \|_{L^2_x}^2 &= \int \mp 6 \bigl(|\phi|^2)' |\phi-\psi|^2 
	\pm 12\Re \bigl\{ [\overline{\phi}-\overline\psi][|\phi|^2-|\psi|^2]\psi'\bigr\}\,dx
\end{align*}
and so that
\begin{gather*}
\Bigl| \tfrac{d\ }{dt} \bigl\| [\phi-\psi](t) \|_{L^2_x}^2 \Bigr| \leq C(t) \bigl\|[\phi-\psi](t)\bigr\|_{L^2_x}^2 
\end{gather*}
where the growth rate $C(t)$ satisfies
\begin{gather*}
C(t)\lesssim \Bigl[\|\phi(t)\|_{L^\infty_x} + \|\psi(t)\|_{L^\infty_x}\Bigl]\Bigr[\|\phi'(t)\|_{L^\infty_x}+\|\psi'(t)\|_{L^\infty_x} \Bigr].
\end{gather*}
In view of \eqref{1439} and $\phi\in L^\infty_t H^1_x$, we see that $C(t)$ is integrable over any finite time interval; thus Gronwall's inequality shows that $\phi\equiv\psi$.
\end{proof}


\begin{thebibliography}{10}

\bibitem{MR0377223}
M. J. Ablowitz and J. F. Ladik,
\emph{Nonlinear differential-difference equations.} 
J. Mathematical Phys. \textbf{16} (1975), 598--603. 

\bibitem{MR0427867}
M. J. Ablowitz and J. F. Ladik,
\emph{Nonlinear differential-difference equations and Fourier analysis.}
J. Mathematical Phys. \textbf{17} (1976), no. 6, 1011--1018.



\bibitem{MR4049393}
Y. Angelopoulos, R. Killip, and M. Visan,
\emph{Invariant measures for integrable spin chains and an integrable discrete nonlinear Schr\"odinger equation.}
SIAM J. Math. Anal. \textbf{52} (2020), no. 1, 135--163.

\bibitem{MR3119633}
C. Audiard,
\emph{Dispersive schemes for the critical Korteweg-de Vries equation.}
Math. Models Methods Appl. Sci. \textbf{23} (2013), no. 14, 2603--2646.

\bibitem{MR3366652}
D.~Bambusi, T.~Kappeler, and T.~Paul,
\emph{From Toda to KdV.}
Nonlinearity \textbf{28} (2015), no. 7, 2461--2496.

\bibitem{MR3327553}
D.~Bambusi, T.~Kappeler, T.~Paul,
\emph{Dynamics of periodic Toda chains with a large number of particles.} 
J. Differential Equations \textbf{258} (2015), no. 12, 4209--4274.  

\bibitem{MR4708776}
Q.~Chauleur,
\emph{Growth of Sobolev norms and strong convergence for the discrete nonlinear Schr\"odinger equation.}
Nonlinear Anal. \textbf{242} (2024), 113517.

  



\bibitem{GV}
J. Ginibre and G. Velo,
\emph{Smoothing properties and retarded estimates for some dispersive evolution equations.}
Comm. Math. Phys. \textbf{144} (1992), no. 1, 163--188.


\bibitem{HGKV}
B.~Harrop-Griffiths, R.~Killip, and M.~Visan,
\emph{Sharp well-posedness for the cubic NLS and mKdV in $H^s(\R)$.}
To appear in Forum Math: Pi.
  
\bibitem{MR1308108}
B. M. Herbst, F. V\'aradi, and M. J. Ablowitz,
\emph{Symplectic methods for the nonlinear Schr\"odinger equation.}
In ``Solitons, nonlinear wave equations and computation (New Brunswick, NJ, 1992)''. 
Math. Comput. Simulation \textbf{37} (1994), no. 4--5, 353--369.

\bibitem{MR0338587}
R. Hirota,
\emph{Exact envelope-soliton solutions of a nonlinear wave equation.}
J. Mathematical Phys. \textbf{14} (1973), no. 7, 805--809.

\bibitem{MR3939333} 
Y. Hong and C. Yang,
\emph{Strong convergence for discrete nonlinear Schr\"odinger equations in the continuum limit.}
SIAM J. Math. Anal. \textbf{51} (2019), no. 2, 1297--1320. 

\bibitem{MR2485456}
L. I. Ignat and E. Zuazua,
\emph{Numerical dispersive schemes for the nonlinear Schr\"odinger equation.}
SIAM Journal of Numerical Analysis \textbf{47} (2009), no. 2, 1366--1390. 

\bibitem{MR2980459}
L. I. Ignat and E. Zuazua,
\emph{Convergence rates for dispersive approximation schemes to nonlinear Schr\"odinger equations.}
J. Math. Pures Appl. (9) \textbf{98} (2012), no. 5, 479--517.

\bibitem{Ishimori}
Y. Ishimori,
\emph{An integrable classical spin chain.}
J. Phys. Soc. Jpn. \textbf{51} (1982), no. 11, 3417--3418.



\bibitem{Kenig.Ponce.Vega1991}
C.~E. Kenig, G.~Ponce, and L.~Vega,
\emph{Oscillatory integrals and regularity of dispersive equations.}
Indiana Univ. Math. J. \textbf{40} (1991), no. 1, 33--69.
 
\bibitem{MR1211741}
C.~E. Kenig, G.~Ponce, and L.~Vega.
\emph{Well-posedness and scattering results for the generalized Korteweg--de Vries equation via the contraction principle.}
Comm. Pure Appl. Math. \textbf{46} (1993), no. 4, 527--620.
 
\bibitem{KOVW1}
R.~Killip, Z.~Ouyang, M.~Visan, and L.~Wu, {\em Continuum limit for the
  {Ablowitz--Ladik} system}. Nonlinearity \textbf{36} (2023), 37--51.




\bibitem{MR3009717}
K. Kirkpatrick, E. Lenzmann, and G. Staffilani,
\emph{On the continuum limit for discrete NLS with long-range lattice interactions.}
Comm. Math. Phys. \textbf{317} (2013), no. 3, 563--591. 

\bibitem{Kwon.Oh.Yoon2020}
S.~Kwon, T.~Oh, and H.~Yoon,
\emph{Normal form approach to unconditional well-posedness of nonlinear dispersive {PDEs} on the real line}.
 Ann. Fac.  Sci. Toulouse Math. \textbf{29} (2020), 649--720.
 
\bibitem{MR252825}
R.~M. Miura.
\emph{Korteweg-de Vries equation and generalizations. I. A remarkable explicit nonlinear transformation.}
J. Mathematical Phys. \textbf{9} (1968), no. 8, 1202--1204.
  


\bibitem{Riesz1933}
M.~Riesz, \emph{Sur les ensembles compacts de fonctions sommable.} Acta Sci. Math. (Szeged) \textbf{6} (1933), 136--142.

  
\bibitem{MR1870156}
G.~Schneider and C.~Wayne, \emph{Counter-propagating waves on fluid surfaces and the continuum limit of the Fermi-Pasta-Ulam model.}
International Conference on Differential Equations, Vol. 1, 2 (Berlin, 1999), 390--404, World Sci. Publ., River Edge, NJ, 2000. 






\end{thebibliography}

\end{document}